\definecolor{liens}{rgb}{1,0,0}
\title{On the algebraic dependence of holonomic functions}
\author{Julien Roques} 
\address{Universit\'e de Lyon, Universit\'e Claude Bernard Lyon 1, CNRS UMR 5208, Institut Camille Jordan, F-69622 Villeurbanne, France}
\email{Julien.Roques@univ-lyon1.fr}
\author{Michael F. Singer}
\address{Department of Mathematics, North Carolina State University,
Box 8205, Raleigh, NC 27695-8205, USA}
\email{singer@math.ncsu.edu}
\date{\today}
\begin{document} %\pagestyle{empty}

%\sffamily
%\mathversion{bold}
\newtheorem{thm}{Theorem}
\newtheorem*{thmintro}{Theorem}
\newtheorem{lem}[thm]{Lemma}
\newtheorem{cor}[thm]{Corollary}
\newtheorem*{corintro}{Corollary}
\newtheorem{prop}[thm]{Proposition}
\newtheorem*{propintro}{Proposition}
\newtheorem{question}[thm]{Question}
\newtheorem*{questionintro}{Question}

\newtheorem{defin}[thm]{Definition}
\newtheorem{remark}[thm]{Remark}
\newtheorem{remarks}[thm]{Remarks}
\newtheorem{example}[thm]{Example}
\def\QED{\hbox{\hskip 1pt \vrule width4pt height 6pt depth 1.5pt \hskip 1pt}}
\newenvironment{prf}[1]{\trivlist
\item[\hskip \labelsep{\bf #1.\hspace*{.3em}}]}{~\hspace{\fill}~$\square$\endtrivlist}
\newenvironment{pproof}{
\begin{prf}{Proof}}{\end{prf}}
 \def\square{\QED}
\newcommand{\red}{\color{red}}
\newcommand{\blue} {\color{blue}}

\newcommand{\und}[1]{\boldsymbol{#1}}
\newcommand{\pFq}[5]{{}_{#1}F_{#2}\left[#3;#4;#5\right]}
\newcommand{\hypergeoequa}[4]{{}_{#1}\mathscr{H}_{#2}\left[#3;#4\right]}

\newenvironment{proofsln}{
\begin{prf}{Proof of Proposition~\ref{prop  case 2}}}{\end{prf}}

\newenvironment{proofitint}{
\begin{prf}{Proof of Theorem~\ref{prop:itint}}}{\end{prf}}

\def\ores{{\rm ores}}
\def\Cx{{\mathbb C}}
\def\CX{{\mathbb C}}
\def\RX{{\mathbb R}}
\def\QX{{\mathbb Q}}
\def\Qbar{\overline{\mathbb Q}}
\def\NX{{\mathbb N}}
\def\N{{\mathbb{Z}_{\geq 0}}}
\def\ZX{{\mathbb Z}}
\def\DX{{\mathbb D}}
\def\AX{{\mathbb A}}
\def\PX{{\mathbb P}}
\def\ord{\mbox{ord }}
\def\GL{{\rm GL}}
\def\SL{{\rm SL}}
\def\gl{{\rm gl}}
\def\sl{{\rm sl}}
\def\M{{\rm gl}}
\def\d{{
\partial}}
\def\fraka{{\mathfrak a}}
\def\frakk{{\mathfrak k}}
\def\frakg{{\mathfrak g}}
\def\frakK{{\mathfrak K}}
\def\frakE{{\mathfrak E}}
\def\frakF{{\mathfrak F}}
\def\frakU{{\mathfrak U}}
\def\frakp{{\mathfrak p}}
\def\ofrakF{{\overline{\mathfrak F}}}
\def\oofrakF{{\overline{\mathfrak F}}_0}
\def\ofrakG{{\overline{\mathfrak G}}}
\def\oofrakG{{\overline{\mathfrak G}}_0}
\def\frakG{{\mathfrak G}}
\def\td{{\tilde{D}}}
\def\ttd{{\tilde{\tilde{D}}}}
\def \ldf{{{\rm LDF}_0}}
\def\QED{\hbox{\hskip 1pt \vrule width4pt height 6pt depth 1.5pt \hskip 1pt}}
\def\calD{{\cal D}}
\def\calC{{\cal C}}
\def\calS{{\cal S}}
\def\calT{{\cal T}}
\def\calF{{\cal F}}
\def\calH{{\cal H}}
\def\calE{{\cal E}}
\def\calL{{\cal L}}
\def\calG{{\cal G}}
\def\calK{{\cal K}}
\def\calO{{\cal O}}
\def\calU{{\cal U}}
\def\calM{{\cal M}}
\def\calP{{\cal P}}
\def\calR{{\cal R}}
\def\calV{{\cal V}}
\def\calA{{\cal A}}
\def\calB{{\cal B}}
\def\U{{\mathcal U}}
\def\K{{\mathbf K}}
\def\cM{{\mathcal M}}
\def\cL{{\mathcal L}}
\def\rep{\mathrm{Repr}}
\def\calCF{{\calC_0(\calF)}}
\def\gal{{\rm DiffGal}}
\def\C{{\rm Const}}
\def\d{{
\partial}}
\def\dx{{
\partial_{x}}}
\def\dt{{
\partial_{t}}}
\def\Autd{{{\rm Aut}_\Delta}}
\def\Gal{{\rm Gal}}
\def\Hom{{\rm Hom}}
\def\PGal{{\rm Gal_\Delta}}
\def\Ga{{{\mathbb G}_a}}
\def\Gm{{{\mathbb G}_m}}
\def\ld{{\ell\d}}
\def\vr{{\Vec{r}}}
\def\va{{\Vec{\mathbf a}}}
\def\vb{{\Vec{\mathbf b}}}
\def\vc{{\Vec{\mathbf c}}}
\def\vf{{\Vec{\mathbf f}}}
\def\vy{{\Vec{\mathbf y}}}
\def\vta{{\Vec{\tau}}}
\def\vt{{\Vec{t}}}
\def\tY{{\tilde{Y}}}
\def\ty{{\tilde{y}}}
\def\sd{{\sigma\delta}}
\def\KPV{{K^{\rm PV}_A}}
\def\ord{{\rm ord}}
\def\seq{\mbox{\sc Seq}_C}
\def\abar{\overrightarrow{a}}
\def\bbar{\overrightarrow{b}}
\def\barg{\overline{g}}
\def\ybar{\overline{y}}
\def\Ker{{\rm{Ker}}}
\def\bK{{\mathbf{K}}}
\def\ba{{\mathbf{a}}}
\def\sd{{\sigma\partial}}
\def\tG{{\widetilde{G}}}
\def\hg{{\widehat{g}}}
\def\hh{{\widehat{h}}}
\def\des{{\rm{dis}}}
\def\dis{{\rm{dis}}}
\def\tr{\operatorname{tr}}
\def\CC{\mathbb{C}}
\def\RR{\mathbb{R}}
\def\QQ{\mathbb{Q}}
\def\ZZ{\mathbb{Z}}
\def\p3{\partial_3}

%\selectlanguage{english}
\bibliographystyle{amsalpha}
\sloppy
\begin{abstract}  We study the form of  possible algebraic relations between functions satisfying linear differential equations. In particular,
if $f$ and $g$ satisfy linear differential equations and are algebraically dependent, we give conditions on the differential Galois group associated to $f$ guaranteeing that $g$ is a polynomial in $f$. We apply this to  hypergeometric functions and iterated integrals.
\end{abstract}

\subjclass[2010]{12H05, 33C10, 34M03}

\keywords{Linear Differential Equations, Differential Galois Theory, Algebraic Relations, Iterated Integrals, Hypergeometric Functions}

\maketitle
\begin{quote}
\begin{center}
{\bf SUR LA D\'EPENDANCE ALG\'EBRIQUE DES FONCTIONS HOLONOMES}
\end{center}
\end{quote}

{\footnotesize
\begin{quote}
 \textsc{Résumé.} Nous nous intéressons aux relations algébriques vérifiées par des solutions d'équations différentielles linéaires. En particulier, si $f$ et $g$ satisfont des équations différentielles linéaires et sont algébriquement dépendantes, nous donnons des conditions sur le groupe de Galois différentiel de $f$ garantissant que $g$ est un polyn\^ome en $f$. Nous appliquons cela aux fonctions hypergéométriques et aux intégrales itérées. 
\end{quote}}

\setcounter{tocdepth}{1}
\tableofcontents

\section{Introduction}\label{sec : intro}

Let $K$ be a differential field of characteristic zero and let $k$ be an algebraically closed differential subfield of $K$ with the same field of constants $C$~:
 $$
 C=\{f \in K \ \vert \ f'=0 \}=\{f \in k \ \vert \ f'=0 \}.
 $$
 A typical example is the field of Puiseux series $K=\overline{C((x))}$ over the algebraically closed field $C$ endowed with the usual derivation $d/dx$ and $k=\overline{C(x)}$ the field of algebraic functions over $C(x)$.
 
 The present paper is concerned with the following general question. 
 
\begin{questionintro}\label{main question}
  We let $f$ and $g$ be elements of $K^{\times}$  satisfying some nontrivial homogeneous linear differential equations over $k$, say  
  $$
  L(f)=0 
  \text{ and } M(g)=0, 
  $$ 
  and we assume that $f$ and $g$ are algebraically dependent over $k$.  What can be said about $f$ and $g$? about the algebraic relations  between $f$ and $g$ over $k$? 
\end{questionintro}

 Differential Galois theory  has been an efffective tool in understanding the possible algebraic  relations {among} solutions of linear differential equations (see, for example, \cite{Kolchin68, HarrisSibuya85, HarrisSIbuya86, Sperber, Singer86,  BBH, roquesOGHEMM}). A celebrated illustration of this is the following  result of Kolchin answering Question \ref{main question} when $L$ and $M$ are first order equations~: 
 
\begin{thm}[Kolchin \cite{Kolchin68}]
If $L$ and $M$ have order $1$, then $f$ and $g$ are algebraically dependent over $k$ if and only if there exists $(m,n) \in \mathbb{Z}^{2} \setminus \{(0,0)\}$ such that 
 $$
 f^{m}g^{n} \in k.
 $$ 
\end{thm}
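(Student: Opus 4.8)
The natural strategy is to pass to the differential Galois group of a single first-order equation and exploit its very restrictive structure. Since $L$ has order $1$, we may write $f'/f = a$ for some $a \in k$, and similarly $g'/g = b$ with $b \in k$. The Picard--Vessiot extension of $k$ generated by $f$ alone has differential Galois group a subgroup $G_f$ of $\Gm = \GL_1$, hence either $G_f = \Gm$ (the generic case, in which $f$ is transcendental over $k$ with no multiplicative algebraic relations) or $G_f$ is the finite cyclic group $\mu_d$ of $d$-th roots of unity, in which case $f^d \in k$. The same dichotomy applies to $g$. The plan is to combine $f$ and $g$ into a single Picard--Vessiot extension $K_0/k$ for the product equation, whose group $G$ is a subgroup of $\Gm \times \Gm$, and then to translate "$f$ and $g$ algebraically dependent over $k$" into a statement about $G$.

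The key step is the classification of algebraic subgroups of $\Gm \times \Gm$: every such subgroup is the common zero locus of a finite collection of characters $(z_1,z_2) \mapsto z_1^m z_2^n$, i.e.\ is cut out by equations of the form $z_1^m z_2^n = 1$. If $G$ is all of $\Gm \times \Gm$, then $f$ and $g$ are algebraically independent over $k$ by the Galois correspondence (the transcendence degree of $K_0/k$ equals $\dim G = 2$), contrary to hypothesis. Hence $G$ is a proper closed subgroup, so there is a nontrivial character $\chi(z_1,z_2) = z_1^m z_2^n$ with $(m,n) \neq (0,0)$ that is trivial on $G$. I would then check that $f^m g^n$ is fixed by every element of $G$: for $\sigma \in G$ corresponding to $(\alpha,\beta) \in \Gm^2$, one has $\sigma(f) = \alpha f$ and $\sigma(g) = \beta g$, so $\sigma(f^m g^n) = \alpha^m \beta^n f^m g^n = f^m g^n$ since $\alpha^m \beta^n = 1$. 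By the fundamental theorem of differential Galois theory (using that $k$ is the fixed field of $G$ in $K_0$), this forces $f^m g^n \in k$, which is the desired conclusion. The converse is immediate: if $f^m g^n = c \in k$ with $(m,n)\neq(0,0)$, say $n \neq 0$, then $g^n = c f^{-m}$ exhibits an algebraic relation, so $f$ and $g$ are algebraically dependent over $k$.

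The main obstacle, and the only real content, is justifying the classification of algebraic subgroups of $\Gm^2$ and the bookkeeping that connects the Galois-theoretic statement to the literal algebraic dependence of $f,g$ over $k$; both are classical (this is essentially Kolchin's original argument), so in a full write-up one would simply cite the structure theory of diagonalizable groups and invoke the Galois correspondence for the Picard--Vessiot extension generated by $f$ and $g$ together. One should also take a moment to note that no constants are added in forming $K_0$, which holds because $k$ and $K$ share the constant field $C$, so the Picard--Vessiot formalism over $k$ applies cleanly.
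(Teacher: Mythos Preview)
The paper does not give its own proof of this statement; it is quoted in the introduction as a classical result and attributed to Kolchin with a citation to \cite{Kolchin68}. There is therefore nothing in the paper to compare your argument against directly.

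On its own merits your sketch is correct and is essentially the standard proof: form the Picard--Vessiot extension $K_0=k(f,g)$ for the direct sum of the two order-one systems, observe that its Galois group $G$ sits inside $\Gm\times\Gm$, use the transcendence-degree equality $\operatorname{tr.deg}(K_0/k)=\dim G$ to force $G$ to be a proper subgroup when $f,g$ are algebraically dependent, and then invoke the fact that every proper closed subgroup of $\Gm^2$ is annihilated by a nontrivial character $(z_1,z_2)\mapsto z_1^m z_2^n$, so that $f^m g^n$ is $G$-fixed and hence lies in $k$. One small remark: under the paper's standing hypothesis that $k$ is algebraically closed, all the differential Galois groups in play are automatically connected, so your side dichotomy ``$G_f=\Gm$ or $G_f=\mu_d$'' collapses to ``$G_f=\Gm$ or $G_f=\{1\}$''; this only streamlines the argument.

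If you want a point of contact inside the paper, note that Theorem~\ref{prop:torus} in Section~\ref{sec: order one eq} generalizes Kolchin's result (order-one $L$, arbitrary-order $M$), and its proof is organized along similar lines: one shows the combined Galois group is isogenous to a one-dimensional torus, diagonalizes, and reads off a monomial relation. Specialized to order-one $M$, that argument recovers exactly what you wrote.
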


%\subsection{If $L$ has order $1$} 

The present work started with the following question : what can be said if we assume that $L$ has order $1$, but do not make any assumption on the order of $M$ ? Our answer reads as follows~: 

\begin{thm}[Theorem~\ref{prop:torus} in Section~\ref{sec: order one eq}]\label{thm order 1 intro}
 If $L$ has order $1$ and if $f \not \in k$, then $f$ and $g$ are algebraically dependent over $k$ if and only if there exists $\theta$ in an algebraic extension of $K$ and a positive integer $n$ such that 
$$
f=\theta^{n}
\text{ and }
g \in k[\theta,\theta^{-1}]. 
$$
\end{thm}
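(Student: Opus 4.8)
The implication ``$\Leftarrow$'' is immediate: if $f=\theta^{n}$ with $n\geq 1$ and $g\in k[\theta,\theta^{-1}]$, then $\theta$ is a root of $T^{n}-f$, hence algebraic over $k(f)$, so $g$ is algebraic over $k(f)$ as well; therefore $\mathrm{trdeg}_{k}k(f,g)\leq 1$, i.e.\ $f$ and $g$ are algebraically dependent over $k$. For ``$\Rightarrow$'' I would argue via differential Galois theory. Since $L$ has order one, $f'=af$ with $a\in k$, and $a\neq 0$ (otherwise $f\in C\subseteq k$). As $k$ is algebraically closed and $f\notin k$, $f$ is transcendental over $k$, so $k(f)$ is a Picard--Vessiot extension of $k$ with $\Gal(k(f)/k)=\Gm$; and since $f$ is transcendental while $f,g$ are algebraically dependent over $k$, $g$ is algebraic over $k(f)$.

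The plan is then to put $f$ and $g$ into a common Picard--Vessiot extension $E/k$ (for instance one attached to $\mathrm{lclm}(L,M)$), with group $G$. The relation $f'/f=a\in k$ yields a surjective character $\chi\colon G\to\Gm$ with $\sigma(f)=\chi(\sigma)f$, for which $E^{\ker\chi}=k(f)$. Since $g$ is algebraic over $k(f)=E^{\ker\chi}$, its stabiliser in $\ker\chi$ has finite index, hence contains $(\ker\chi)^{\circ}$, so $g\in E^{(\ker\chi)^{\circ}}$. As $(\ker\chi)^{\circ}$ is normal in $G$, the field $E_{2}:=E^{(\ker\chi)^{\circ}}$ is a Picard--Vessiot extension of $k$, containing $f$ and $g$, whose group $\overline{G}:=G/(\ker\chi)^{\circ}$ is an extension of $\Gm$ by the finite group $\ker\chi/(\ker\chi)^{\circ}$.

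Next I would use that $k$ is algebraically closed. Having no nontrivial finite extension, $k$ forces $\overline{G}$ to be connected; a connected linear algebraic group that is an extension of $\Gm$ by a finite group is a one-dimensional torus, so $\overline{G}\cong\Gm$ and the map $\overline{G}\to\Gm$ induced by $\chi$ is, after inverting the isomorphism if needed, $z\mapsto z^{n}$ for some integer $n\geq 1$. Since $\overline{G}\cong\Gm$ and $k$ is algebraically closed, the Picard--Vessiot ring of $E_{2}/k$ is $k[\theta,\theta^{-1}]$ with $\sigma_{z}(\theta)=z\theta$, so $E_{2}=k(\theta)$ and $\theta'/\theta\in k$. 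Comparing the two actions, $f\theta^{-n}$ is $\overline{G}$-invariant, hence lies in $E_{2}^{\overline{G}}=k$; replacing $\theta$ by $\gamma\theta$ for a $\gamma\in k$ with $\gamma^{n}=f\theta^{-n}$ (possible since $k$ is algebraically closed) we may assume $f=\theta^{n}$. Then $\theta'/\theta=a/n$, $\theta$ lies in an algebraic extension of $K$, and $g\in k(\theta)$.

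It then remains to upgrade $g\in k(\theta)$ to $g\in k[\theta,\theta^{-1}]$, and here I would use holonomy: $M(g)=0$ forces $\mathrm{span}_{k}\{g,g',g'',\dots\}$ to be finite dimensional over $k$. If $g$ had a pole at $\theta=\beta$ for some $\beta\in k^{\times}$ (all its poles sit at $k$-points, $k$ being algebraically closed), then, as $a\neq 0$, $\partial(\theta-\beta)=(a/n)\theta-\beta'$ does not vanish at $\theta=\beta$ --- otherwise $\theta/\beta$ would be a constant, so $\theta\in k$ --- hence the $(\theta-\beta)$-adic valuation on $k(\theta)$ satisfies $v_{\beta}(\partial\phi)=v_{\beta}(\phi)-1$ whenever $v_{\beta}(\phi)<0$. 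Therefore $v_{\beta}(g^{(j)})=v_{\beta}(g)-j$ for all $j$, so $g,g',g'',\dots$ are linearly independent over $k$, contradicting the finite dimensionality above. Thus $g$ has poles only at $\theta=0,\infty$, i.e.\ $g\in k[\theta,\theta^{-1}]$. The step I expect to be the main obstacle is the descent in the middle: one must arrange for $g$ to land in a Picard--Vessiot subextension whose group is an extension of $\Gm$ by a finite group, and then use algebraic closedness of $k$ to conclude that this group is \emph{exactly} $\Gm$ --- which is precisely what forces $f$ to be a perfect power $\theta^{n}$. Once $E_{2}=k(\theta)$ with $\theta^{n}=f$ is in hand, the concluding valuation argument is routine.
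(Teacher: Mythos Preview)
Your proof is correct and follows the same overall arc as the paper's: reduce to a Picard--Vessiot extension of $k$ with Galois group $\Gm$ containing both $f$ and $g$, then write $f=\theta^{n}$. The two arguments diverge in two places worth noting.

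First, the paper chooses $M$ to be the \emph{minimal} operator annihilating $g$, so that the $G$-orbit of $g$ spans the full solution space of $M$; this forces the entire Picard--Vessiot extension $E$ of the direct-sum system to be finite over $k(f)$, and then a short lemma on isogenies onto tori (exactly your inline claim that a connected extension of $\Gm$ by a finite group is $\Gm$) gives that $G$ itself is a one-dimensional torus. You instead keep $M$ arbitrary and pass to the subfield $E_{2}=E^{(\ker\chi)^{\circ}}$; this avoids the minimality trick at the cost of an extra Galois-correspondence step, but the outcome is the same.

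Second, and more substantively, the paper concludes $g\in k[\theta,\theta^{-1}]$ by representation theory: since $G\cong\Gm$, one diagonalizes its action on the solution space so that $G$ acts via $\operatorname{diag}(t^{n_{0}},\ldots,t^{n_{m}})$; then $(\tilde{g}_{i}/\theta^{n_{i}})^{n_{0}}$ is $G$-invariant, hence in $k$, so each $\tilde{g}_{i}\in k\,\theta^{n_{i}}$ and $g\in k[\theta,\theta^{-1}]$. Your valuation argument---a pole of $g$ at $\theta=\beta\in k^{\times}$ would have strictly increasing order under $\partial$, contradicting holonomy---is a more elementary route to the same conclusion. The paper's approach is cleaner and is the template reused later for $\SL_{n}$, $\GL_{n}$, and general reductive $G_{L}$; yours stays closer to first principles and does not invoke the structure of torus representations.
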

The rest of the paper is devoted to the case when the operator $L$ has order $\geq 2$. 
For any $f \in K^{\times}$ as above, we let $\mathcal{A}(f)$ be the $k$-algebra made of the elements $g$ of $K$ holonomic and algebraically dependent  on $f$ over $k$. In general, the obvious inclusion 
$$
k[f] \subset \mathcal{A}(f)
$$ 
is not an equality. The principal aim of Sections~\ref{sec:simpcon}, \ref{sec:specialcase} and \ref{sec:genred} is to give criteria on the differential Galois group of $L$ over $k$ ensuring that $\mathcal{A}(f)=k[f]$. In this introduction, we only state the following concrete consequence of these criteria and refer to Sections~\ref{sec:simpcon}, \ref{sec:specialcase} and \ref{sec:genred} for more general statements. 

\begin{thm}[Consequence of Proposition~\ref{prop  case 2} in Section~\ref{sec:simpcon} and Proposition~\ref{prop: case GL} in Section~\ref{sec:specialcase}]\label{thm SL or GL intro}
 Assume that $L$ has order $n \geq 2$ and that its differential Galois group $G_{L}$ over $k$ is either $\SL_{n}(C)$ or $\GL_{n}(C)$. Then, we have 
 $$\mathcal{A}(f) = k[f].$$
 \end{thm}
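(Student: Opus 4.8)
The plan is to reduce the statement to a Galois-theoretic computation about the action of $G_L$ on the Picard–Vessiot extension and then invoke the structure of $\SL_n(C)$ and $\GL_n(C)$ as algebraic groups. Let $F$ be a Picard–Vessiot extension of $k$ for $L$, chosen (or enlarged) so that it contains $f$ and a full fundamental system of solutions $f=f_1,\dots,f_n$ of $L$; since $g$ is holonomic and algebraically dependent on $f$ over $k$, we may assume, after a further Picard–Vessiot extension, that $g$ also lies in an extension $E\supseteq F$ whose differential Galois group $H$ over $k$ surjects onto $G_L$. The key point is that $g$, being algebraic over $k(f)\subseteq k(f_1,\dots,f_n)$, is in fact algebraic over $F$, hence — $F$ being the splitting field data — one expects $g$ to lie in $F$ itself once one knows $F$ is algebraically closed in $E$ relative to the relevant subgroup, or more directly: the $k(f)$-algebraic element $g$ generates a finite extension on which $G_L$ acts, and the orbit of $g$ must be finite.

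The heart of the argument is then the following: the elements of $F$ that are algebraic over $k(f_1)$ form a $k$-subalgebra stable under the stabilizer in $G_L$ of the line (or the point) corresponding to $f_1$. So first I would identify $\mathcal{A}(f)$ with the ring of $G_{L,f_1}$-semi-invariants (invariants up to the finite quotient coming from the algebraic relation) inside $F$, where $G_{L,f_1}\subseteq G_L$ is the isotropy subgroup of $f_1$ under the natural linear action of $G_L$ on the solution space $V=\langle f_1,\dots,f_n\rangle$. When $G_L=\GL_n(C)$ this isotropy group is the stabilizer of a nonzero vector, i.e. (up to conjugation) the affine group $\{M : Mv_0=v_0\}$, which is $\Ga^{\,n-1}\rtimes \GL_{n-1}(C)$; when $G_L=\SL_n(C)$ it is the analogous subgroup of $\SL_n$. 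In both cases the crucial feature is that this isotropy subgroup is a \emph{maximal} proper subgroup up to its own unipotent radical in a way that forces: the only rational functions on $V$ fixed by $G_{L,f_1}$ are the rational functions of the linear coordinate dual to $f_1$ — equivalently, $k(f_1)$ is algebraically closed in $F$. That is the step I would spell out carefully, using either an explicit description of the invariant field of the vector stabilizer acting on $\CC^n$, or the fact that $\GL_n/G_{L,f_1}\cong \CC^n\setminus\{0\}$ (resp. an $\SL_n$-analogue) so that $F^{G_{L,f_1}}$ is the function field of $\CC^n\setminus\{0\}$, which is $k(f_1,\dots,f_n)^{G_{L,f_1}}=k(f_1)$ after accounting for the constants; hence any $g$ algebraic over $k(f_1)$ and lying in $F$ already lies in $k(f_1)$, and being holonomic and entire-type (a unit, polynomial growth) pins it down to $k[f_1,f_1^{-1}]$ and then, excluding the torus case which is handled by Theorem~\ref{thm order 1 intro} and does not occur for these $G_L$, to $k[f]$.

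Concretely, the steps in order are: (1) set up the Picard–Vessiot extension $F/k$ for $L$ containing $f$, note $C$ is still the constant field; (2) show $g\in\mathcal{A}(f)$ implies $g$ is algebraic over $F$, then that $g\in F$ — here one uses that $F$ is stable under $G_L$ and that the minimal polynomial of $g$ over $k(f)$ has $G_L$-conjugate coefficients lying in $k(f)$, forcing $g$ into the integral closure, which is in $F$; (3) translate "$g$ algebraic over $k(f)$, $g\in F$" into "$g$ fixed by the isotropy subgroup $G_{L,f}$ of $G_L$ acting on $V$", via the Galois correspondence applied to the subfield $k(f)\subseteq F$ (one must check $\mathrm{Gal}(F/k(f))=G_{L,f}$, i.e. that $k(f)$ is exactly the fixed field, or at least compute its algebraic closure in $F$); (4) compute the invariants: $F^{G_{L,f}}=k(f)$ for $G_L\in\{\SL_n,\GL_n\}$, using the homogeneous space description $G_L/G_{L,f}\hookrightarrow V$; (5) conclude $g\in k(f)$, then use holonomicity and $g\in K^\times$ plus the order $\geq 2$ hypothesis to upgrade $k(f)$ to $k[f]$, invoking the already-stated Propositions~\ref{prop case 2} and \ref{prop: case GL} for whatever the precise mechanism is in those references.

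The main obstacle I anticipate is step (4) — or rather the precise interface between (3) and (4): one must be careful that the relevant isotropy subgroup is the stabilizer of the \emph{vector} $f$ in $V$ and not merely of the line $Cf$, and that the field-of-definition/constant-field bookkeeping in the homogeneous space $G_L/G_{L,f}\cong \CC^n\setminus\{0\}$ (or the $\SL_n$ twist) is done over $k$ rather than over an algebraically closed field, so that $F^{G_{L,f}}$ really comes out to be $k(f)$ and not some larger field. A secondary subtlety is the passage from "$g$ algebraic over $k(f)$" to "$g\in F$ with finite $G_L$-orbit", which requires knowing that the Picard–Vessiot ring is normal (integrally closed in its fraction field over the base) — this is standard but should be cited. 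Everything after that is essentially the dictionary between algebraic group theory and the Galois correspondence, together with the elementary fact that a holonomic unit whose field of rationality is $k(f)$ is forced into $k[f,f^{-1}]$ and thence (since the torus case is excluded) into $k[f]$.
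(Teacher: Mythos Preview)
Your approach contains a genuine gap in step (4), and step (2) is not justified in the $\GL_n$ case.

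\textbf{Step (4) is wrong as stated.} You claim $F^{G_{L,f}}=k(f)$, but this is impossible on dimension grounds. The stabilizer $G_{L,f}$ of a nonzero vector has codimension $n$ in $G_L$ (the orbit $G_L\cdot f$ is $V\setminus\{0\}$), so by the Galois correspondence $\operatorname{tr.deg}_k F^{G_{L,f}}=\dim(G_L/G_{L,f})=n$, whereas $\operatorname{tr.deg}_k k(f)=1$. What \emph{is} true is that $F^{G_{L,f}}$ is the function field of the orbit $V\setminus\{0\}\cong\mathbb{A}^n\setminus\{0\}$, hence purely transcendental in $n$ variables over $k$; concretely $F^{G_{L,f}}=k(f,f',\ldots,f^{(n-1)})=k\langle f\rangle$, since the first column of $\mathfrak{Y}_L$ is fixed by $G_{L,f}$. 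From this one can indeed conclude that $k(f)$ is relatively algebraically closed in $F^{G_{L,f}}$ (since $f$ is one of the coordinates), which is what you actually need. Your chain of equalities ``function field of $\CC^n\setminus\{0\}$, which is $k(f_1,\dots,f_n)^{G_{L,f_1}}=k(f_1)$'' conflates two different objects: the first has transcendence degree $n$, the last has transcendence degree $1$. The paper establishes the relative-algebraic-closedness statement directly by exhibiting an explicit transcendence basis of $k(\SL_n\otimes k)$ (resp.\ of an extension of $k(\GL_n\otimes k)$) containing the image of $f$; your homogeneous-space idea is a legitimate alternative route to the same conclusion, but only once the correct target field is identified.

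\textbf{Step (2) fails for $\GL_n$.} Your argument that ``the minimal polynomial of $g$ over $k(f)$ has $G_L$-conjugate coefficients lying in $k(f)$, forcing $g$ into the integral closure, which is in $F$'' does not establish $g\in F$: it only says $g$ is integral over $k(f)$, which was the hypothesis. The real issue is whether the Picard--Vessiot extension $E$ of the direct-sum system equals $E_L=F$. For $\SL_n$ this holds because $\SL_n$ is simply connected, so the isogeny $G\twoheadrightarrow G_L$ with finite kernel must be an isomorphism. But $\GL_n$ is \emph{not} simply connected, and $E$ can be a nontrivial finite extension of $F$; one only obtains $g\in k[\mathfrak{Y}_L,\sqrt[q]{\det\mathfrak{Y}_L}^{\,-1}]$ for some $q\geq 1$. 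The paper handles this via a tannakian analysis (splitting off the center of $\GL_n$, passing to an $N$-fold cover, and tracking the representation of the derived group), after which the relative-algebraic-closedness is checked in this enlarged ring. Your outline skips this entirely.

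In short: the stabilizer/homogeneous-space idea can be made to work for the $\SL_n$ case and gives a conceptually clean alternative to the paper's explicit transcendence-basis computation, but you have misidentified the fixed field. For the $\GL_n$ case you are missing the mechanism that places $g$ in (an explicit extension of) the Picard--Vessiot ring of $L$, which is where most of the paper's effort goes.
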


\begin{remark}
1.~When $n=2$, the fact that the differential Galois group over $k$ of $L$ is either $\SL_{2}(C)$ or $\GL_{2}(C)$  is equivalent to the irreducibility of $L$ over $k$ \cite{AlgSubgroups}. We thus have the following statement :
 If $L$ has order $2$ and is irreducible over $k$ then 
  $\mathcal{A}(f) = k[f].$\\[0.05in]
2. Another simple consequence of Theorem \ref{thm SL or GL intro} is: Assume that $L$ has order $n \geq 2$ and that its differential Galois group $G_{L}$ over $k$ is either $\SL_{n}(C)$ or $\GL_{n}(C)$. If $f \not \in k$, then, for $m \geq 2$, $f^{1/m}$ does not satisfy any nontrivial linear differential equation over $k$. This follows easily form the fact that we cannot have $f^{1/m} \in k[f]$ unless $f$ is algebraic over $k$ and, hence, belongs to $k$. For related results, see \cite{HarrisSIbuya86, Sperber, Singer86}.
\end{remark}

Theorem \ref{thm SL or GL intro} applies, for example, to many generalized hypergeometric series
\begin{equation*}
\pFq{p}{q}{\und \alpha}{\und \beta}{x}=\sum_{k=0}^{+\infty} \frac{(\alpha_{1})_k \cdots (\alpha_{p})_k}{(\beta_{1})_k \cdots (\beta_{q})_k} \frac{x^k}{k!} \in \CC((x))
\end{equation*}
where $\und \alpha =(\alpha_{1},\ldots,\alpha_{p}) \in \CC^{p}$ and $\und \beta =(\beta_{1},\ldots,\beta_{q}) \in (\CC\setminus \mathbb{Z}_{\leq 0})^{q}$ for some $p,q \in \ZZ_{\geq 0}$ and  
where the Pochhammer symbols $(t)_{k}$ are defined by $(t)_0=1$ and, for $k \in \mathbb{Z}_{\geq 1}$, $(t)_k=t(t+1)\cdots (t+k-1)$. Indeed, this series satisfies the generalized hypergeometric differential equation 
$$
\hypergeoequa{p}{q}{\und \alpha}{\und \beta}(\pFq{p}{q}{\und \alpha}{\und \beta}{x}) =0 
$$
where 
\begin{equation*} 
\hypergeoequa{p}{q}{\und \alpha}{\und \beta} = \delta \prod_{k=1}^{q} (\delta + \beta_k-1) - x \prod_{k=1}^{p} (\delta + \alpha_k)
\end{equation*}
with $\delta=x\frac{d}{dx}$ and the work of Beukers, Brownawell and Heckman \cite{BBH}, Beukers and Heckman \cite{beukersheckman},  Duval and Mitschi \cite{duvalmitschi}, Katz \cite{katzcalculation,expsum} and Mitschi \cite{mitschi} give explicit conditions (holding for generic parameters $\und \alpha,\und \beta$) ensuring that the differential Galois group over $\overline{\CC(x)}$ of $\hypergeoequa{p}{q}{\und \alpha}{\und \beta}$ is either $\SL_{n}(\CC)$ or $\GL_{n}(\CC)$ (in these references, the differential Galois groups are computed over $\CC(x)$, not $\overline{\CC(x)}$; {this is not a problem since {if the differential Galois group over ${\CC(x)}$ is connected then it does not change when one replaces the base field with $\overline{\CC(x)}$ \cite[Proposition 5.28]{Ma}.}} 
For instance, it follows from \cite[Theorem 3.6]{expsum} that, for any $\beta \in \QQ \setminus \ZZ_{\leq 0}$, the differential Galois group over $\overline{\CC(x)}$ of $\hypergeoequa{0}{1}{-}{\beta}$ is $\SL_{2}(\CC)$; so, for $K=\overline{\CC((x))}$ endowed with the derivation $d/dx$ and $k=\overline{\CC(x)}$, Theorem \ref{thm SL or GL intro} ensures that~:

\begin{cor}\label{cor for 0F1 intro}
For any $\beta \in \QQ \setminus \ZZ_{\leq 0}$, we have 
$$
\mathcal{A}(\pFq{0}{1}{-}{\beta}{x}) = \overline{\CC(x)}[\pFq{0}{1}{-}{\beta}{x}].
$$
\end{cor}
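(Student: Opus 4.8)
The plan is to deduce Corollary~\ref{cor for 0F1 intro} directly from Theorem~\ref{thm SL or GL intro} once we have verified that the hypotheses of that theorem are met for the operator $L=\hypergeoequa{0}{1}{-}{\beta}$. First I would set $K=\overline{\CC((x))}$ with derivation $d/dx$ and $k=\overline{\CC(x)}$; these have the same field of constants $\CC$, so the general framework of the paper applies. The function $f=\pFq{0}{1}{-}{\beta}{x}$ lies in $K^{\times}$ (its constant term is $1$) and satisfies $L(f)=0$, so it is holonomic over $k$. Since $L$ has order $2\geq 2$, to invoke Theorem~\ref{thm SL or GL intro} it suffices to know that the differential Galois group $G_{L}$ of $L$ over $k=\overline{\CC(x)}$ is $\SL_{2}(\CC)$.

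Next I would establish this Galois group computation. For $\beta\in\QQ\setminus\ZZ_{\leq 0}$, \cite[Theorem 3.6]{expsum} identifies the differential Galois group of $\hypergeoequa{0}{1}{-}{\beta}$ over $\CC(x)$ as $\SL_{2}(\CC)$, which is connected. As noted in the excerpt (citing \cite[Proposition 5.28]{Ma}), a connected differential Galois group does not change upon passing from $\CC(x)$ to its algebraic closure $\overline{\CC(x)}$. Hence $G_{L}=\SL_{2}(\CC)$ over $k$ as well. One small point to address is that the reference may phrase the statement for the Bessel-type equation rather than literally for $\hypergeoequa{0}{1}{-}{\beta}$; these are equivalent after a standard change of variable that does not affect the differential Galois group, so this is a routine matter rather than a genuine obstacle.

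Then the conclusion is immediate: by Theorem~\ref{thm SL or GL intro}, $\mathcal{A}(f)=k[f]$, that is,
$$
\mathcal{A}(\pFq{0}{1}{-}{\beta}{x}) = \overline{\CC(x)}[\pFq{0}{1}{-}{\beta}{x}],
$$
which is exactly the assertion of the corollary. No further work is needed, since all the substantive content — that the $k$-algebra of holonomic functions algebraically dependent on $f$ reduces to polynomials in $f$ — is supplied by Theorem~\ref{thm SL or GL intro}, itself a consequence of Propositions~\ref{prop  case 2} and \ref{prop: case GL}.

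I do not expect any serious obstacle here: the corollary is purely an instantiation of a general theorem on a concrete family of examples, and the only thing to be careful about is citing the correct form of the Galois-group computation for $\hypergeoequa{0}{1}{-}{\beta}$ over the algebraically closed base field $\overline{\CC(x)}$, which the remarks preceding the corollary have already arranged. If anything required a touch more care it would be making the reduction from the known result over $\CC(x)$ to the statement over $\overline{\CC(x)}$ fully explicit, but connectedness of $\SL_{2}(\CC)$ settles this at once.
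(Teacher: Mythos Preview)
Your proposal is correct and follows exactly the paper's own argument: the corollary is deduced immediately from Theorem~\ref{thm SL or GL intro} after invoking \cite[Theorem 3.6]{expsum} to identify the differential Galois group of $\hypergeoequa{0}{1}{-}{\beta}$ as $\SL_{2}(\CC)$ and using connectedness to pass from $\CC(x)$ to $\overline{\CC(x)}$. Your caveat about the Bessel formulation is unnecessary, but otherwise there is nothing to add.
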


Moreover, we show in Section~\ref{sec:hyper} that our methods (combined with the local formal theory of differential equations) can be used in order to examine in details the algebraic dependence relations  between hypergeometric series. For instance, Corollary~\ref{cor for 0F1 intro} ensures that if some generalized hypergeometric series $\pFq{p}{q}{\und \gamma}{\und \delta}{x}$ is algebraically dependent on $\pFq{0}{1}{-}{\beta}{x}$, then it is a polynomial in $\pFq{0}{1}{-}{\beta}{x}$; in Section~\ref{sec:hyper}, we establish a much more precise statement~: 

\begin{thm}\label{thm intro : application to alg rel hypergeo}
Consider $\beta \in \QQ \setminus \ZZ_{\leq 0}$. Consider  $\und \gamma \in (\CC \setminus \ZZ_{\leq 0})^{p}$ and $\und \delta \in (\CC \setminus \ZZ_{\leq 0})^{q}$. If $\pFq{0}{1}{-}{\beta}{x}$ and $\pFq{p}{q}{\und \gamma}{\und \delta}{x}$ are algebraically dependent over $\overline{\CC(x)}$, then 
\begin{itemize}
 \item either 
 $$
q=p+1 \text{ and }   
\pFq{p}{q}{\und \gamma}{\und \delta}{x}  \in \CC(x) \pFq{0}{1}{-}{\beta}{x} 
+
\CC(x); 
$$
\item or 
$$q  = p -1 \text{ and } \pFq{p}{q}{\und \gamma}{\und \delta}{x} \in \overline{\CC(x)}.
$$ 
\end{itemize}
\end{thm}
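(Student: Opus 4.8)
The plan is to combine the algebraic statement coming from Corollary~\ref{cor for 0F1 intro} with a local analysis at the singular points of the generalized hypergeometric equations. Write $f = \pFq{0}{1}{-}{\beta}{x}$ and $g = \pFq{p}{q}{\und\gamma}{\und\delta}{x}$. By Corollary~\ref{cor for 0F1 intro}, algebraic dependence forces $g \in \overline{\CC(x)}[f]$, so we may write $g = \sum_{j=0}^{d} a_j(x) f^j$ with $a_j \in \overline{\CC(x)}$ and $a_d \neq 0$. The first task is to bound $d$. The equation $\hypergeoequa{0}{1}{-}{\beta}$ has order $2$ and, by the cited result of \cite{expsum}, its differential Galois group over $\overline{\CC(x)}$ is $\SL_2(\CC)$; hence $f$ is transcendental over $\overline{\CC(x)}$ and the powers $1, f, f^2, \dots$ are linearly independent over $\overline{\CC(x)}$, so the representation of $g$ is unique. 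Moreover, since $g$ is holonomic, the $\overline{\CC(x)}$-vector space spanned by $g$ and all its derivatives is finite-dimensional; differentiating $g = \sum a_j f^j$ and using $f'' = (\text{something})/(\text{something}) \cdot f' + (\text{something}) f$ (the normalized form of the $\hypergeoequa{0}{1}{-}{\beta}$ relation) shows this span is contained in the $\overline{\CC(x)}$-span of $\{f^i (f')^\epsilon : 0 \le i \le d, \ \epsilon \in \{0,1\}\}$, which is consistent; the real input is the order of $M$, the minimal operator of $g$, which is $q+1$ when $q \ge p$ and related data otherwise. I would argue that the order of the symmetric-power-type construction $\overline{\CC(x)}[f]$ filtered by degree produces an operator of order $2d+1$ annihilating $f^d$ and, more usefully, that the minimal operator of $g = \sum_{j \le d} a_j f^j$ has order exactly $2d+1$ unless $d \le 1$ — because the $\SL_2$ Galois group makes the symmetric powers $\mathrm{Sym}^j$ of the standard representation irreducible and pairwise non-isomorphic, so no cancellation of the top symmetric power can occur. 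Comparing with the order $\le q+1$ of the hypergeometric operator satisfied by $g$ forces $2d+1 \le q+1$ in the relevant range, but a sharper local argument is needed to pin down $d \in \{0,1\}$ and to relate $q$ to $p$.

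The heart of the proof is therefore the local analysis at $x=0$ and $x=\infty$ (and, for $\hypergeoequa{0}{1}{-}{\beta}$, at $x=\infty$ which is irregular singular). The function $f = \pFq{0}{1}{-}{\beta}{x}$ has, at $x=\infty$, an irregular singularity with two formal solutions whose exponential parts are $e^{\pm 2\sqrt{x}}$ (this is the classical connection between $\pFq{0}{1}{}{\beta}{}$ and Bessel functions). On the other hand, $g = \pFq{p}{q}{\und\gamma}{\und\delta}{x}$ with $p,q$ finite has prescribed slopes at $x=\infty$: if $q \ge p$ the slopes are $\le 1/(q-p+1) \le 1$ when $q > p$, while the exponential growth $e^{c x^{1/(q-p+1)}}$ appears only with that fractional exponent, and if $q < p$ the point $x=\infty$ is regular singular. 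Now if $g = \sum_{j=0}^{d} a_j f^j$ with $d \ge 2$, the term $f^d$ contributes an exponential part $e^{2d\sqrt{x}}$ at $x=\infty$, i.e.\ a slope $1/2$ with the specific exponent $x^{1/2}$; matching this against the allowed slopes of $\pFq{p}{q}{}{}{}$ forces $q - p + 1 = 2$, that is $q = p+1$, and simultaneously forces $d$ to be small — indeed the set of exponential parts of $g$ is $\{0, \pm 2\sqrt{x}, \pm 4\sqrt{x}, \dots, \pm 2d\sqrt{x}\}$ (with multiplicities), whereas $\pFq{p}{p+1}{}{}{}$ has at most two solutions with a nonzero exponential part and those are $e^{\pm 2\sqrt{x}}$ up to algebraic twists; hence $d \le 1$. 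This yields $g \in \overline{\CC(x)} f + \overline{\CC(x)}$ in the case $q = p+1$. The case $d = 0$ gives $g \in \overline{\CC(x)}$, and one then checks via the structure of regular versus irregular singularities that $g \in \overline{\CC(x)}$ can only occur when $x = \infty$ is not irregular for $M$, forcing $q \le p$; refining with the exponents at $0$ and $1$ gives $q = p - 1$ (the case $q = p$ is excluded because then $\pFq{p}{p}{}{}{}$ still has a genuine singularity at $x=1$ with non-integer local data generically, hence is not algebraic — this needs the hypothesis relating $\und\gamma, \und\delta$, or else $\pFq{p}{p}{}{}{}$ reduces).

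To finish the first alternative I would pin down the coefficient field: once $g = a_1(x) f + a_0(x)$ with $a_i \in \overline{\CC(x)}$, I want $a_1, a_0 \in \CC(x)$. This follows by a Galois-descent / rationality argument: both $f$ and $g$ have power series expansions in $\CC((x))$ with the coefficient field $\CC(x)$ stable under the relevant Galois action (the coefficients of the hypergeometric series lie in $\QQ(\beta, \und\gamma, \und\delta) \subseteq \CC$), and since $1$ and $f$ are linearly independent over $\overline{\CC(x)}$, Cramer's rule expresses $a_1, a_0$ in terms of $f, g$ and one derivative, giving elements fixed by $\mathrm{Gal}(\overline{\CC(x)}/\CC(x))$ hence in $\CC(x)$; alternatively one notes $f \notin \overline{\CC(x)}$ forces $a_1 \ne 0$ and compares Wronskians. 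The main obstacle I anticipate is the bookkeeping in the local analysis at $x = \infty$: one must carefully track, for $\pFq{p}{q}{\und\gamma}{\und\delta}{x}$ with $q = p+1$, exactly which formal solutions carry the exponential factor $e^{\pm 2\sqrt{x}}$ and with what algebraic (power-of-$x$ and logarithmic) corrections, and likewise show that a product $f^2$ cannot be ``absorbed'' by lower-order terms with $\overline{\CC(x)}$ coefficients — this is where the irreducibility and mutual non-isomorphism of the symmetric powers $\mathrm{Sym}^j(\mathrm{std})$ of the $\SL_2$-representation, already used in the order count, does the essential work of preventing degeneracies. A secondary subtlety is handling the genuinely degenerate hypergeometric configurations (when some $\gamma_i - \delta_j \in \ZZ$, or the series terminates, or $\pFq{p}{q}{}{}{}$ is itself algebraic), which must be folded into the ``$g \in \overline{\CC(x)}$'' branch; the stated hypothesis $\und\gamma \in (\CC\setminus\ZZ_{\le 0})^p$ and the structure theory of hypergeometric equations should make these cases fall under $q = p - 1$.
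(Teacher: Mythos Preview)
Your overall strategy coincides with the paper's: decompose the minimal module $\mathcal{M}$ of $g$ as a direct sum of symmetric powers $\operatorname{Sym}^{m_i}(\mathcal{L})$ (in your version, via $g = \sum_j a_j f^j$ together with the irreducibility and pairwise non-isomorphism of the $\operatorname{Sym}^j$ as $\SL_2$-representations), then compare determining polynomials at $\infty$ with those of $\hypergeoequa{p}{q}{\und\gamma}{\und\delta}$. Starting from $g \in \overline{\CC(x)}[f]$ rather than from the bare tannakian factorization $\pi_M = u \circ \pi_L$ is a genuine, if minor, simplification: once $d \le 1$ is established you land directly on $g = a_0 + a_1 f$, whereas the paper first obtains $g = a + bh + ch'$ for some solution $h$ of $L$ and must argue separately, using the $\SL_2$-action, that $c=0$ and $h \in \CC f$.

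The real gap is in your treatment of the case $d = 0$, i.e.\ $g \in \overline{\CC(x)}$. Your claim that $\pFq{p}{p}{\und\gamma}{\und\delta}{x}$ has a genuine singularity at $x=1$ is false: the operator $\hypergeoequa{p}{q}{\und\gamma}{\und\delta}$ has $x=1$ as a singularity only in the Fuchsian case $q+1 = p$; for $q \ge p$ the only finite singularity is $x=0$. Likewise, the implication ``$M$ regular at $\infty$ forces $q \le p$'' is a non sequitur: $M$ is merely a right factor of $\hypergeoequa{p}{q}{\und\gamma}{\und\delta}$, and that factor being Fuchsian says nothing about the full operator. The correct exclusion of $q \ge p$ in the algebraic branch is simpler than what you attempt: for $q \ge p$ the series is entire, so if it also lay in $\overline{\CC(x)}$ it would be a polynomial, contradicting the hypothesis $\und\gamma \in (\CC \setminus \ZZ_{\le 0})^p$, which prevents termination. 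The case $q+1 < p$ is excluded because the series then has radius of convergence $0$, whereas any element of $\overline{\CC(x)}[f]$ has positive radius. This leaves exactly $q = p-1$. Your descent of $a_0, a_1$ to $\CC(x)$ via Galois/Cramer is also not quite right as stated (note that $\overline{\CC(x)} \cap \CC((x))$ is strictly larger than $\CC(x)$); the paper instead uses that $f, g$ are entire and $f, 1$ are $\overline{\CC(x)}$-linearly independent, which forces the algebraic coefficients to be single-valued and hence rational.
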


\begin{remark}
 Whether or not the second condition holds true  can be decided by using  Beukers and Heckman's \cite{beukersheckman} provided that $\gamma_{i}-\delta_{j} \not \in \ZZ$ for all $i \in \{1,\ldots,p\}$ and $j \in \{1,\ldots,q\}$. 
\end{remark}

As already mentioned, Theorem \ref{thm SL or GL intro} is a consequence of results contained in Sections~\ref{sec:simpcon}, \ref{sec:specialcase} and \ref{sec:genred}, where the hypothesis that the differential Galois group $G_{L}$ of $L$ over $k$ is either $\SL_{n}(C)$ or $\GL_{n}(C)$ is replaced by the hypothesis that $G_{L}$ is either simply connected or reductive. These generalizations (more precisely, the generalization to the simply connected case) allow us to describe $\mathcal{A}(f)$ when $f$ is an iterated integral ({\it i.e.}, when $f^{(n)} \in k$ for some positive integer $n$, see \cite{Ravi}) and to study the algebraic relations among iterated integrals. Indeed, in Section \ref{sec:multint} we prove the following results. 

\begin{thm}[Theorem \ref{prop:itint} in Section \ref{sec:multint}]\label{sec:multint intro}  Let $f \in K^\times \setminus k$ be an iterated integral over $k$ and let $g \in K$ satisfy a nonzero homogeneous linear differential equation over $k$.  Assume that $k$ contains an element $x$ with $x'=1$.  Then, we have $$\mathcal{A}(f) = k[f].$$ \end{thm}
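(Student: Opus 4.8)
The plan is to deduce Theorem~\ref{prop:itint} from the reductive/simply connected criteria of the earlier sections by analyzing the structure of the differential Galois group attached to an iterated integral. First I would recall that if $f$ is an iterated integral over $k$, i.e.\ $f^{(n)}\in k$ for some $n\geq 1$, then $f$ satisfies a linear differential equation $L(y)=0$ over $k$ whose associated differential module is an iterated extension of the trivial module by itself; equivalently, after choosing a suitable cyclic vector, the differential Galois group $G_L$ of $L$ over $k$ is unipotent. Since $f\notin k$, $G_L$ is a nontrivial connected unipotent group. The key algebraic fact I would invoke is that connected unipotent groups over a field of characteristic zero are simply connected (they are successive extensions of copies of $\Ga$, and such groups are simply connected). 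Therefore $G_L$ falls within the scope of the simply connected criterion, and the corresponding Proposition from Section~\ref{sec:genred} applies to give $\mathcal{A}(f)=k[f]$.

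The main work is to make the reduction precise, because the earlier criteria presumably apply to the differential Galois group of the \emph{minimal} operator annihilating $f$, and one must be careful about what "the differential Galois group of $L$" means and about replacing $L$ by the minimal annihilating operator of $f$ over $k$. So the second step is: let $L_f$ be the minimal monic operator in $k[d]$ with $L_f(f)=0$; its order is at most $n+1$. I would show that the Picard--Vessiot extension generated by a basis of solutions of $L_f$ has solvable, in fact unipotent, Galois group. Concretely, if $f^{(n)}=a\in k$, then $f, f', \dots, f^{(n-1)}$ together with $1$ and an antiderivative chain realize a full solution space inside the Picard--Vessiot ring; the Galois action fixes $k$ hence fixes $a$, so each $\sigma\in G$ sends $f^{(n-1)}$ to $f^{(n-1)}+c_{n-1}(\sigma)$ with $c_{n-1}(\sigma)\in C$, then $f^{(n-2)}$ to $f^{(n-2)}+c_{n-1}(\sigma)x+c_{n-2}(\sigma)$ (using that $x\in k$ with $x'=1$), and so on; reading off the matrix of $\sigma$ in the basis $(1,x,\dots)$ one sees $G$ is a connected unipotent subgroup of the group of unipotent upper triangular matrices. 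This is exactly where the hypothesis $x\in k$, $x'=1$ is used: it guarantees the successive antiderivatives of polynomials stay inside $k$, so the module is genuinely an iterated self-extension of the trivial module and the Galois group is unipotent rather than merely solvable.

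The third step is to confirm that the relevant Proposition from Section~\ref{sec:genred} has hypotheses matching "$G_L$ connected and simply connected" (or "unipotent"), and to check that there are no side conditions on $f$ that fail here — e.g.\ if that Proposition requires $f\notin k$, this is given; if it requires the base field $k$ to be algebraically closed with constants $C$, that is part of the standing setup. Then $\mathcal{A}(f)=k[f]$ follows immediately since $\mathcal{A}(f)$ depends only on $f$ and $k$, not on the chosen operator.

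The hard part, I expect, is not the Galois-theoretic input — the reduction to "unipotent $\Rightarrow$ simply connected $\Rightarrow$ earlier theorem" is clean — but rather verifying carefully that an iterated integral over $k$ indeed has unipotent differential Galois group over $k$ in the precise sense required, including the subtlety that $f$ itself need not generate the whole Picard--Vessiot extension of its minimal operator and that one must track the full flag $1, f^{(n-1)}, f^{(n-2)}, \dots, f$ and the polynomial antiderivatives it forces (hence the role of $x$). Once that structural statement is in hand, the theorem is a corollary of the simply connected case treated earlier.
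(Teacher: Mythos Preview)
Your reduction ``unipotent $\Rightarrow$ simply connected $\Rightarrow$ earlier theorem'' is the right starting point, and the paper does exactly this (citing Hochschild for the fact that unipotent groups are simply connected). But there is a genuine gap in your plan: the simply connected criterion, Proposition~\ref{prop sln}, does \emph{not} by itself yield $g\in k[f]$. Its first part only gives $g\in k[W(f_1,\dots,f_n)]$, i.e.\ $g$ lies in the Picard--Vessiot ring. To pass from this to $g\in k[f]$ one needs the extra hypothesis~($*$): that $k(f)$ is relatively algebraically closed in $k(W(f_1,\dots,f_n))=k\langle f\rangle$. You never address~($*$), and you explicitly say the reduction is ``clean'' --- it is not, because~($*$) is where the real content lies.

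You also misidentify the hard part. Showing the Galois group of an iterated integral is unipotent is routine: the tower $k\subset k(f^{(n-1)})\subset\cdots\subset k(f^{(n-1)},\dots,f)$ exhibits $k\langle f\rangle$ as built from successive $\Ga$-extensions (this is Lemma~\ref{lem:algindep}.1, and the element $x$ is used only to guarantee that $1,x,\dots,x^{n-1},f$ form a full basis of solutions). The substantive step is Lemma~\ref{lem:algindep}.2: one must show that $k\langle f\rangle$ is purely transcendental over $k$ with $f$ itself among the generators, so that $k(f)$ is relatively algebraically closed in $k\langle f\rangle$. A priori a transcendence basis extracted from $\{f^{(n-1)},\dots,f',f\}$ might fail to contain $f$; the paper handles this with a separate technical lemma (Lemma~\ref{lem:simple_int}) showing that if $y'\in E$, $y$ transcendental over $E$, and $z\in E(y)$ with $z'=y$, then $z$ is \emph{linear} in $y$ --- this allows one to swap a generator $f^{(j)}$ for $f^{(j-1)}$ and push the last generator down to $f$. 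Your proposal contains no analogue of this step, and without it the argument does not close.
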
 

\begin{prop}[Proposition \ref{prop:liouv} in Section \ref{sec:multint}]\label{prop:liouv intro} Assume that there is an element $x\in k$ such that $x'=1$. Let $f_1, \ldots , f_n$ be iterated integrals over $k$. If $f_1, \ldots , f_n$  are algebraically dependent over $k$ then there exist $u_1, \ldots , u_n \in k$, not all zero, such that 
$$
u_1f_1+ \cdots + u_n f_n \in k.
$$ 
\end{prop}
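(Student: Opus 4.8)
The plan is to place $f_1,\dots,f_n$ inside a Picard--Vessiot extension of $k$ and to use the fact that, because each $f_i$ is an iterated integral, the differential Galois group acts on $f_i$ by \emph{translation by an element of $k$}. Substituting a minimal-degree polynomial relation between the $f_i$ into this action will show that the resulting family of translations lies in a proper linear subspace of $k^{n}$, and a linear form vanishing on that subspace then yields the sought $k$-linear relation modulo $k$.

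First I would set up the Galois picture. Since $k$ is algebraically closed, its field of constants $C$ is algebraically closed as well. For each $i$ fix an integer $m_i\ge 1$ with $a_i:=f_i^{(m_i)}\in k$; if some $a_i=0$ then $f_i$ is a polynomial in $x$ over $C$, hence lies in $k$, and the relation $f_i\in k$ already works, so assume $a_i\ne 0$ for every $i$. A short computation shows that the solution space in $K$ of the operator $L_i:=a_i\,\partial^{m_i+1}-a_i'\,\partial^{m_i}$ is $C f_i\oplus\{y\in K:\partial^{m_i}y=0\}$; this is a full fundamental system of $L_i$, all of whose members other than the multiples of $f_i$ are polynomials in $x$ and therefore lie in $k$. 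Hence $k\langle f_i\rangle$ is the Picard--Vessiot extension of $L_i$ over $k$ realised inside $K$, with no new constants, and consequently $E:=k\langle f_1,\dots,f_n\rangle$, being the compositum of these extensions inside $K$, is again a Picard--Vessiot extension of $k$. Put $G:=\gal(E/k)$, so that $E^{G}=k$.

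The heart of the matter is the translation property: for $\sigma\in G$ and each $i$ one has $\partial^{m_i}\big(\sigma(f_i)-f_i\big)=\sigma(a_i)-a_i=0$, and since $x\in k$ this forces $\lambda_i(\sigma):=\sigma(f_i)-f_i$ to be a polynomial in $x$ over $C$, in particular $\lambda_i(\sigma)\in k$. Now choose a nonzero $P\in k[Y_1,\dots,Y_n]$ of minimal total degree with $P(f_1,\dots,f_n)=0$; it exists by hypothesis and is non-constant. Because the coefficients of $P$ lie in $k$, applying $\sigma\in G$ gives $P\big(f_1+\lambda_1(\sigma),\dots,f_n+\lambda_n(\sigma)\big)=0$; the polynomial $P\big(Y_1+\lambda_1(\sigma),\dots,Y_n+\lambda_n(\sigma)\big)-P(Y_1,\dots,Y_n)$ then has total degree strictly below $\deg P$ and vanishes at $(f_1,\dots,f_n)$, so by minimality it is identically zero. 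Thus, for every $\sigma\in G$, the vector $\lambda(\sigma):=(\lambda_1(\sigma),\dots,\lambda_n(\sigma))\in k^{n}$ lies in $W:=\{c\in k^{n}:P(Y+c)=P(Y)\text{ identically}\}$, which is the set of $k$-points of a Zariski-closed subgroup of $\Ga^{n}$, hence (in characteristic zero) a $k$-linear subspace of $k^{n}$, and it is proper because $P$ is non-constant.

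To conclude, set $s=\mathrm{codim}_{k}W\ge 1$ and pick a surjective $k$-linear map $\pi\colon k^{n}\to k^{s}$ with kernel $W$, say $\pi(Y)=\big(\sum_{i}b_{1i}Y_i,\dots,\sum_{i}b_{si}Y_i\big)$ with $(b_{ji})$ of rank $s$ over $k$, and put $g_j:=\sum_{i}b_{ji}f_i\in E$ for $j=1,\dots,s$. Since $\lambda(\sigma)\in W=\ker\pi$ for all $\sigma\in G$, we get $\sigma(g_j)-g_j=\sum_{i}b_{ji}\lambda_i(\sigma)=0$, so $g_j\in E^{G}=k$; as the first row $(b_{11},\dots,b_{1n})$ of the matrix is nonzero, the elements $u_i:=b_{1i}\in k$ are not all zero and satisfy $u_1f_1+\dots+u_nf_n=g_1\in k$, which is the assertion. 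The only genuine bookkeeping I anticipate is in the construction of $E$, namely checking that the $f_i$ live in a common Picard--Vessiot extension of $k$ with unchanged constants --- this is exactly where the hypothesis $x\in k$ enters, since it is what forces the non-$f_i$ solutions of each $L_i$ into $k$. Once this is in place, the translation property of the Galois action and the minimal-polynomial/stabilizer argument are short and constitute the conceptual core; an alternative would be an induction on $n$ that, after passing to a minimal algebraically dependent subfamily, reduces to the case of Theorem~\ref{prop:itint}, but the argument above treats all $n$ uniformly.
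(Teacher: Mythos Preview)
Your proof is correct and takes a genuinely different, more direct route than the paper's. The paper proceeds by first proving the case $n=2$ (Corollary~\ref{cor:lindep}) as a consequence of Theorem~\ref{prop:itint}---which in turn rests on the simply-connected machinery of Proposition~\ref{prop sln}---and then, for general $n$, reducing to a minimal dependent family and comparing the various pairwise linear relations $f_i=A_{i,j}f_j+B_{i,j}$ over $F_{i,j}=k(f_1,\dots,\widehat{f_i},\dots,\widehat{f_j},\dots,f_n)$ via partial-derivative computations to force $A_{1,2}\in k$ and $B_{1,2}$ to be $k$-linear in $f_3,\dots,f_n,1$. Your argument bypasses all of this: you observe directly that the Galois group of $E=k\langle f_1,\dots,f_n\rangle$ acts on each $f_i$ by translation by an element of $k$ (this is exactly where $x\in k$ enters), feed the resulting translations into a minimal-degree algebraic relation, and read off a $k$-linear form from the stabilizer subgroup $W\subsetneq\Ga^n$. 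This is self-contained, uniform in $n$, and does not invoke Theorem~\ref{prop:itint} or the simply-connected theory at all; it is in spirit a higher-order version of the classical Kolchin--Ostrowski argument. One small remark: your opening sentence invokes the standing hypothesis that $k$ is algebraically closed, but your argument never uses this---only that $C$ is algebraically closed (for the Galois correspondence $E^G=k$) and that $x\in k$---so it recovers the stronger form noted in the paper's remark following the statement.
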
%

\begin{remark}
Our proof of Proposition \ref{prop:liouv intro} does not use the fact that $k$ is algebraically closed, it only requires that $C$  be algebraically closed. 
\end{remark}

This paper is organized as follows.  Section~\ref{sec: order one eq} is devoted to  the case where $L$ has order $1$ and contains a proof of Theorem \ref{thm order 1 intro} above. In Section~\ref{sec:simpcon} we give a result  when the differential Galois group $G_L$ of $L$ over $k$ is simply connected, apply  this to the case when $G_L = \SL_n(C)$ (proving the $\SL_{n}(C)$-case of Theorem~\ref{thm SL or GL intro} above). 
In Section~\ref{sec:specialcase} we consider the case when $G_L$ is $\GL_n(C)$ (proving the $\GL_{n}(C)$-case of Theorem~\ref{thm SL or GL intro} above) and in Section~\ref{sec:genred} the case when $G_L$ is a general reductive group. {We have included a full treatment of the $\GL_{n}(C)$-case before considering the general reductive case because the proof is less technical in the $\GL_{n}(C)$-case but already contains most of the ideas used in the general reductive case.}  In Section~\ref{sec:hyper} we apply the results of the previous sections to generalized hypergeometric series and prove Theorem \ref{thm intro : application to alg rel hypergeo} above. In Section~\ref{sec:multint} we study the algebraic relations among iterated integrals (proving Proposition \ref{sec:multint intro} and Proposition \ref{prop:liouv intro} above). The Appendix gives an informal introduction to the tannakian approach to the differential Galois theory - an approach that appears in several of our proofs.

\section{Notations and {conventions}}\label{sec:notations}

In the whole paper, except where noted to the contrary ({\it e.g.}, in Section~\ref{sec:multint}):
\begin{itemize}
 \item $K$ is a differential field of characteristic zero;
 \item $k$ is an algebraically closed differential subfield of $K$; 
 \item we assume that $K$ and $k$ have the same field of constants $C$~:
 $$
 C=\{f \in K \ \vert \ f'=0 \}=\{f \in k \ \vert \ f'=0 \}.
 $$
\end{itemize}
 Note that $C$ has characteristic zero and is algebraically closed because it is the field of constants of $k$ which itself has characteristic zero and is algebraically closed (see \cite[Exercise 1.5.2]{VdPS}). Also  note  that the assumption that $k$ is algebraically closed implies that the differential Galois group of any Picard-Vessiot extension is connected \cite[Proposition 1.34]{VdPS}.
 
{We will assume that the reader is familiar with some basic notions concerning linear algebraic groups (a general reference is \cite{Humphreys}) and  the basic differential Galois theory (a general reference is \cite{Ma} or \cite{VdPS}). In our proofs we will also sometimes use the tannakian correspondence. Briefly this states that for a differential equation $Y'=AY$, there is a correspondence between representations of the associated differential Galois group and equations gotten from the original equations via applying the constructions of linear algebra (direct sums, tensor products, duals, submodules and quotients) to the differential module associated with $Y'=AY$.  In the Appendix we give a fuller but still informal description of this. Formally this is given as an equivalence of categories; for precise details we refer to \cite{delignemilne,KatzAlgSolDiffEqua,katzcalculation,VdPS}).

Except with explicit mention to the contrary, by ``algebraic group'', we will mean ``linear algebraic group over $C$''. }

\section{The first order case}\label{sec: order one eq}

{We start this Section with a preliminary result concerning  algebraic groups. This  result  will be used several times in this paper. }

\begin{lem}\label{lem:torus isogeny}
 Let $\varphi : G_{1} \rightarrow G_{2}$ be an isogeny between two algebraic groups $G_{1}$ and $G_{2}$. If $G_{1}$ is connected and if $G_{2}$ is a torus, then  $G_{1}$ is a torus as well and $G_{1}$ and $G_{2}$ have the same dimension. 
\end{lem}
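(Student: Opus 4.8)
The plan is to use the fact that an isogeny $\varphi : G_1 \to G_2$ is a surjective homomorphism with finite kernel, together with the structure theory of connected commutative linear algebraic groups. First I would observe that since $G_2$ is a torus, it is commutative, hence $G_1/\ker\varphi$ is commutative, so the commutator subgroup $[G_1,G_1]$ is contained in the finite group $\ker\varphi$. But $[G_1,G_1]$ is a connected subgroup (being the image of the connected group $G_1 \times G_1$ under the commutator map, or rather generated by such images), and a connected finite group is trivial; therefore $G_1$ is commutative.

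Next, a connected commutative linear algebraic group over the algebraically closed field $C$ of characteristic zero decomposes as a direct product $G_1 \cong G_{1,s} \times G_{1,u}$ of its subgroup of semisimple elements (a torus) and its subgroup of unipotent elements (which, in characteristic zero, is isomorphic to a vector group $\mathbb{G}_a^r$). I would then argue that $\varphi$ must kill $G_{1,u}$: the image $\varphi(G_{1,u})$ is a connected unipotent subgroup of the torus $G_2$, hence trivial, so $G_{1,u} \subseteq \ker\varphi$. Since $\ker\varphi$ is finite and $G_{1,u} \cong \mathbb{G}_a^r$ is connected, this forces $G_{1,u}$ to be trivial, i.e. $r=0$. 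Hence $G_1 = G_{1,s}$ is a torus. Finally, an isogeny is in particular surjective with finite (hence zero-dimensional) kernel, so $\dim G_1 = \dim G_2$; this last point is immediate from the dimension formula for a surjective morphism of algebraic groups.

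The main obstacle — really the only place needing care — is invoking the right structure results in the correct generality: namely that $[G_1,G_1]$ is connected (standard, e.g. \cite{Humphreys}), that a connected commutative linear algebraic group in characteristic zero splits as torus times vector group, and that a connected unipotent subgroup of a torus is trivial. All of these are standard facts about linear algebraic groups over an algebraically closed field, so once they are cited the argument is short. An alternative, slightly slicker route avoiding the commutativity step would be to pass to $G_1^{\circ} = G_1$ (already connected) and note that the kernel of $\varphi$, being finite and normal in a connected group, is central; then $G_1$ is a central extension of a torus by a finite group, but one still needs commutativity of $G_1$ to conclude it is a torus, so I would keep the commutator argument above as the cleanest path.
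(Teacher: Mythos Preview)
Your proof is correct, but it takes a somewhat different route than the paper's. The paper argues directly that $G_1$ consists of semisimple elements (hence is a torus, by the cited exercise in \cite{Humphreys}): any unipotent $u\in G_1$ maps to a unipotent element of the torus $G_2$, hence to $e_{G_2}$, so $u\in\ker\varphi$; but $\ker\varphi$ is finite and nontrivial unipotent elements have infinite order in characteristic zero, so $u=e_{G_1}$. This avoids both the commutativity step and the structure theorem for connected commutative groups that you invoke. Your approach is perfectly sound and conceptually clean, but it calls on heavier machinery (connectedness of $[G_1,G_1]$ and the decomposition $G_{1,s}\times G_{1,u}$) where the paper gets by with just Jordan decomposition and an order argument. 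The dimension claim is handled the same way in both.
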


\begin{pproof}
 To show that the connected group $G_{1}$ is a torus it is enough to show that it consists of semisimple elements (c.f. \cite[Exercise 2 in Section 21.4]{Humphreys}). It is equivalent to showing that $e_{G_{1}}$ is the unique unipotent element of $G_{1}$ (because any linear algebraic group over $C$ contains the semisimple and the unipotent parts of the Jordan decompositions of its elements).  Since the image by 
$
\varphi
$ 
of a unipotent element of $G_{1}$ is unipotent and since the only unipotent element of $G_{2}$ is $e_{G_{2}}$, the unipotent elements of $G_{1}$ are all in $\ker \varphi$. Since $\ker \varphi$ is finite and unipotent elements have infinite order or are the iedentity, $G_1$ has a unique unipotent element, namely $e_{G_{1}}$. 

{It remains to prove that $G_{1}$ and $G_{2}$ have the same dimension. This {follows from the fact that $\dim G_1 = \dim \ker \phi + \dim {\rm im} \,  \phi$, \cite[CH.~7.3, Proposition B]{Humphreys}}}
\end{pproof}

\begin{thm}\label{prop:torus}
Let {$f \in K^{\times}\setminus k$} be such that 
$$
f'=af
$$ 
for some $a \in k$.  Let $g \in K$ satisfy a nonzero homogeneous linear differential equation over $k$. We then have that $f$ and $g$ are algebraically dependent over $k$ if and only if there exists $\theta$ in an algebraic extension of $K$ and an  integer $n$ such that 
\begin{align}\label{eq:algdep1}
f=\theta^{n}
\text{ and }
g \in k[\theta,\theta^{-1}]. 
\end{align}
\end{thm}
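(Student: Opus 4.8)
The plan is to reduce the problem to a statement about the differential Galois group attached to $f$, which is a subgroup of $\Gm = \GL_1(C)$ and hence a torus or finite, and then to use the tannakian/Galois correspondence together with the structure Lemma~\ref{lem:torus isogeny}. First I would dispose of the trivial implication: if $f=\theta^n$ with $\theta$ in an algebraic extension of $K$ and $g\in k[\theta,\theta^{-1}]$, then $\theta$ is algebraic over $k(f)$ (being a root of $T^n-f$), so $g$ is algebraic over $k(f)$, i.e. $f$ and $g$ are algebraically dependent over $k$; and $g$ is holonomic since $\theta$ satisfies $\theta' = (a/n)\theta$ and a polynomial in $\theta,\theta^{-1}$ satisfies a linear ODE over $k$. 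So the content is the forward direction.

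For the forward direction, assume $f$ and $g$ are algebraically dependent over $k$, with $f\notin k$. The key first step is to pass to a Picard--Vessiot framework. Let $E$ be a Picard--Vessiot extension of $k$ containing both $f$ and a full solution space of the equation $M(g)=0$ satisfied by $g$; let $G$ be its differential Galois group over $k$, and let $H = \gal(E/k(f))$ be the closed subgroup fixing $k(f)$. Because $f' = af$ with $a\in k$, the line $Cf$ is $G$-stable, giving a character $\chi: G \to \Gm$, $\sigma(f) = \chi(\sigma) f$; since $f\notin k$, $\chi$ is nontrivial, and $\ker\chi = \gal(E/k(f)) = H$ up to the usual identifications (more precisely $H$ is the kernel of $\chi$ on the stabilizer, but since $Cf$ is already $G$-stable, $H=\ker\chi$). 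Thus $G/H \cong \chi(G)$ is a nontrivial connected (as $k$ is algebraically closed) subgroup of $\Gm$, hence $G/H = \Gm$. Now algebraic dependence of $g$ on $f$ over $k$ means $g$ is algebraic over $k(f)$, hence $g \in \overline{k(f)}\cap E$; the subgroup of $H$ fixing $g$ has finite index in $H$, so the $H$-orbit of $g$ is finite.

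The heart of the argument is then to extract $\theta$. Consider the $G$-module structure: I would look at the $k$-span inside $E$ of the finite $G$-orbit of $g$, equivalently at the $G$-orbit of $g$ over $\overline{k(f)}$, to produce, via the line $Cf$ and its character $\chi$, a character $\psi$ of $G$ with $\psi^n = \chi$ for a suitable positive integer $n$ — this is where one uses that $g$ generates a finite extension of $k(f)$ and that Kummer theory over $k(f)$ (with $k$ algebraically closed, so containing all roots of unity) forces such an extension to be built from radicals of $f$ times elements of $k$. Concretely, $\overline{k(f)}$ contains $f^{1/N}$ for all $N$, and the subextension $k(f)(g)$, being abelian over $k(f)$ with Galois group a quotient of $G/H=\Gm$... — here one must be slightly careful since $g$ need not generate the whole algebraic closure intersected with $E$; the right statement is that $g$ lies in $k(f^{1/n})$ for some $n$. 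Set $\theta = f^{1/n}$ (a choice of $n$-th root in an algebraic extension of $K$); then $f=\theta^n$ automatically, and it remains to see $g\in k[\theta,\theta^{-1}]$ rather than merely $g\in k(\theta)$. For that, $\theta' = (a/n)\theta$ shows $k[\theta,\theta^{-1}]$ is a differential ring, and since $g$ is holonomic while $k(\theta)$ is a finite extension of $k(f)$ with the single "place at the singularity of $f$" — one argues, as in Kolchin's theorem, that the denominator of $g$ as a rational function of $\theta$ over $k$ must be a monomial, because $f$ (hence $\theta$) has no other algebraic relation with elements of $k$; equivalently, write $g = \sum_{i} c_i \theta^{i}$ as a finite Laurent-type expansion using that $g$ generates a module on which the torus acts with a finite set of weights.

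I expect the main obstacle to be the passage "$g$ algebraic over $k(f)$ $\Rightarrow$ $g\in k(f^{1/n})$", i.e. controlling the algebraic closure: one must show the relevant subextension of $E$ over $k(f)$ is of Kummer type with cyclic (or at least abelian) group coming from the $\Gm$-quotient, and rule out that $g$ introduces constants or new algebraic functions from $k$ unrelated to $f$. The clean way is the representation-theoretic one: the $C$-vector space $V$ spanned by the $G$-conjugates of $g$ inside $E$ is a finite-dimensional $G$-module; restricted to $H=\ker\chi$ it is trivial up to finite index, and $G/H=\Gm$ acts on it through a finite quotient of $\Gm$, i.e. through $\mu_n$ for some $n$; decomposing $V$ into eigenlines for $\Gm$ and matching weights with powers of $\chi$ pins down each conjugate of $g$ as (element of $k$)$\cdot f^{j/n}$ for integers $j$, after which summing and using that $g$ itself is $H$-invariant identifies $g$ with a Laurent polynomial in $\theta=f^{1/n}$ over $k$. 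The final cleanup — that only finitely many powers of $\theta$ occur and the coefficients lie in $k$, not merely in the constant field of $k(\theta)$ — follows since $k$ is algebraically closed and $C$ is already the constant field of $k(\theta)$ (as $\theta$ is transcendental over $C$ because $f$ is), so no new constants appear.
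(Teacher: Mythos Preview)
Your setup matches the paper's, and you even name Lemma~\ref{lem:torus isogeny} in your plan --- but you never actually apply it, and the substitutes you offer do not work. The paper's key move is exactly that lemma: take the operator annihilating $g$ to be \emph{minimal}, so that the $C$-span of the $G$-orbit of $g$ is the full solution space; then every solution, hence all of $E$, is algebraic over $k(f)$, and $H=\ker\chi$ is finite. Since $k$ is algebraically closed, $G$ is connected, so $G\to\Gm$ is an isogeny between connected groups and Lemma~\ref{lem:torus isogeny} forces $G$ itself to be a one-dimensional torus. Now simply diagonalize: in a suitable basis $f,\tilde g_1,\ldots,\tilde g_r$ the image of $G$ is $\{\operatorname{diag}(t^{n_0},\ldots,t^{n_m}):t\in C^\times\}$; choosing $\theta$ with $\theta^{n_0}=f$, each $(\tilde g_i/\theta^{n_i})^{n_0}$ is $G$-fixed, hence in $k$, so $\tilde g_i\in k^\times\theta^{n_i}$ and $g\in\operatorname{span}_C\{\tilde g_i\}\subset k[\theta,\theta^{-1}]$.

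Your alternative argument contains a concrete error: the claim that ``$G/H=\Gm$ acts on $V$ through a finite quotient of $\Gm$, i.e.\ through $\mu_n$'' cannot hold, since $\Gm$ is connected and has no nontrivial finite quotient --- that would make $V$ a trivial module. What you actually need is that $G$ (not $G/H$) acts diagonalizably on $V$, and that is precisely the statement ``$G$ is a torus'' that the lemma delivers. The Kummer route (``$g\in k(f^{1/n})$'') has the same circularity: it presupposes that $H$ is cyclic, which follows once you know $G\cong\Gm$ but not before. Finally, minimality of the operator is essential: without it you can only conclude that the $H$-orbit of $g$ is finite, not that $H$ itself is, and the isogeny argument does not start.
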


\begin{pproof} If $f$ and $g$ satisfy \eqref{eq:algdep1}, they are clearly algebraically dependent over $k$. 

Let us prove the converse implication. 
Let $L$ be a nonzero linear differential operator with coefficients in $k$ annihilating $g$ and of minimal order $r$ for this property. Let $Y'=A_{L}Y$ be the corresponding linear differential system. 

We let $E$ be a Picard-Vessiot extension over $k$ for the differential system  
\begin{equation}\label{diff syst direct sum}
 Y'=
\begin{pmatrix}
 a&0\\
 0&A_{L}
\end{pmatrix}
Y
\end{equation}
containing $f$ and $g$. Let $G_E$ be the corresponding differential Galois group. The differential system \eqref{diff syst direct sum} has a fundamental solution of the form  
$$
\mathfrak{Y} 
=
\begin{pmatrix}
 f&0&\cdots &0\\
 0&g_{1}&\cdots &g_{r}\\
 0&g_{1}'&\cdots &g_{r}'\\
  0&\vdots&\ddots &\vdots\\
   0&g_{1}^{(r-1)}&\cdots &g_{r}^{(r-1)}
\end{pmatrix} 
\in GL_{r+1}(E)
$$
with $g_{1}=g$ (thus, $E$ is the field extension of $k$ generated by the entries of  $\mathfrak{Y}$). 
The field extension $E_{1}$ (resp. $E_{2}$) of $k$ generated by $f$ (resp. by the $g_{i}^{(j)}$) is a  Picard-Vessiot extension over $k$ for $y'=ay$ (resp. $Y'=A_{L}Y$); we let $G_{1}$ (resp. $G_{2}$) be the corresponding differential Galois group over $k$. 

By minimality of $L$, we have 
$$
\operatorname{span}_{C} G g
=
\operatorname{span}_{C} G_{2} g
=
\operatorname{span}_{C}\{g_{1},\ldots,g_{r}\}
$$ 
(note that $Gg=G_{2}g$ because the restriction morphisms $G \rightarrow G_{2}$ is surjective). 
Moreover, the $C$-line generated by $f$ is left invariant by $G$. These facts, together with the fact that $g$ is algebraic over $k(f)$, imply that the $g^{(j)}_{i}$ are algebraic over $k(f)$ as well, {\it i.e.}, $E$ is a finite extension of $E_{1}$. 

{The differential Galois theory implies that} the restriction morphism  
$
G \rightarrow G_{1}
$
is surjective and hence induces a short exact sequence 
$$
0 \rightarrow H \rightarrow G \rightarrow G_{1} \rightarrow 0. 
$$
The kernel $H$ (which is the group of differential field automorphisms of $E$ over $E_{1}$) is finite because $E$ is a finite extension of $E_{1}$.  Moreover, $G_{1}$ is a one dimensional torus (it is $C^{\times}$ in the representation given by $f$). Lemma \ref{lem:torus isogeny} implies that $G$ itself is a one dimensional torus. 

Therefore, there exists a basis of solutions $\widetilde{g}_{1},\ldots,\widetilde{g}_{r}$ of $L$ in $E$ such that the image of the representation of the Galois group $G$  with respect to 
$$
\widetilde{\mathfrak{Y}}
=
\begin{pmatrix}
 f&0&\cdots &0\\
 0&\widetilde{g}_{1}&\cdots &\widetilde{g}_{r}\\
 0&\widetilde{g}_{1}'&\cdots &\widetilde{g}_{r}'\\
  0&\vdots&\ddots &\vdots\\
   0&\widetilde{g}_{1}^{(r-1)}&\cdots &\widetilde{g}_{r}^{(r-1)}
\end{pmatrix} 
\in \GL_{n}(E)
$$
is given by 
$$
\{\operatorname{diag}(t^{n_{0}},\ldots,t^{n_{m}}) \ \vert \ t \in C^{\times}\}
$$
for some $n_{0},\ldots,n_{m} \in \mathbb{Z}$. 

Therefore, if $\theta$ is such that $f=\theta^{n_{0}}$, then $\widetilde{g}_{i}=a_{i}\theta^{n_{i}}$ for some $a_{i} \in k^{\times}$ (because $(\widetilde{g}_{i}/\theta^{n_{i}})^{n_{0}}$ is fixed by $G$). This concludes the proof since $g$ is in the $C$-span of the $\widetilde{g}_{i}$. 
\end{pproof}

\begin{cor}\label{cor:f eg Pg}
Let $f \in K^{\times}$ be such that 
$$
f'=af
$$ 
for some $a \in k$.  Let $g \in K$ satisfy a nonzero homogeneous linear differential equation over $k$. Assume that
$$
f=P(g)
$$
for some nonconstant polynomial $P$ with coefficients in $k$. Then, there exists $\theta$ in  $K$ such that $\theta'/\theta \in k$
$$
f=\theta^{n} \text{ and } g=c+d\theta
$$ 
for some $n \in \mathbb{Z}_{\geq 0}$ and some $c,d \in k$. 
\end{cor}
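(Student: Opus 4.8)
The plan is to deduce this from Theorem~\ref{prop:torus}. First dispose of the trivial case $f\in k$: then $g$ is algebraic over $k$, hence $g\in k$ since $k$ is algebraically closed, and one takes $\theta=f$, $n=1$, $c=g$, $d=0$. So assume $f\notin k$. The hypothesis $f=P(g)$ makes $f$ and $g$ algebraically dependent over $k$, so Theorem~\ref{prop:torus} supplies $\theta$ in an algebraic extension of $K$ and an integer $n$ with $f=\theta^{n}$ and $g\in k[\theta,\theta^{-1}]$. Since $f\notin k$ we have $n\neq 0$ and $\theta$ is transcendental over $k$; after replacing $\theta$ by $\theta^{-1}$ if necessary we may assume $n\geq 1$, and $g\notin k$.

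The core of the argument is to analyze the identity $P(g)=\theta^{n}$ inside the Laurent polynomial ring $k[\theta,\theta^{-1}]$. Write $g=\sum_{i}\mu_{i}\theta^{i}$, let $M$ (resp.\ $m$) be the largest (resp.\ smallest) exponent occurring, put $d=\deg P\geq 1$ and let $c_{d}\neq 0$ be the leading coefficient of $P$. The coefficient of $\theta^{dM}$ in $g^{d}$ is $\mu_{M}^{d}$, and no $g^{j}$ with $j<d$ produces that exponent when $M>0$; similarly at the bottom. Since $P(g)=\theta^{n}$ is a single monomial with $n\geq 1$, comparing supports forces $M>0$, hence $n=dM$, and then $m\geq 0$; thus $g\in k[\theta]$ is a genuine polynomial of degree $M\geq 1$. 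This support bookkeeping is the only mildly delicate point; everything after it is formal.

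Now factor $P$ over the algebraically closed field $k$ as $P(T)=c_{d}\prod_{i=1}^{d}(T-r_{i})$ with $r_{i}\in k$. Then $c_{d}\prod_{i=1}^{d}(g-r_{i})=\theta^{n}$ in the UFD $k[\theta]$, in which $\theta$ is prime. Each $g-r_{i}$ is a non-unit (otherwise $g\in k$), so $g-r_{i}=\lambda_{i}\theta^{e_{i}}$ for some $\lambda_{i}\in k^{\times}$ and some $e_{i}\geq 1$, with $\sum_{i}e_{i}=n$ and $c_{d}\prod_{i}\lambda_{i}=1$. Subtracting two of these relations gives $r_{j}-r_{i}=\lambda_{i}\theta^{e_{i}}-\lambda_{j}\theta^{e_{j}}$; the right-hand side has no constant term, so $r_{i}=r_{j}$, and then $e_{i}=e_{j}$ and $\lambda_{i}=\lambda_{j}$. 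Hence all the $r_{i}$ coincide with a single $r\in k$ and all the $e_{i}$ with a single $e\geq 1$, so $g=r+\lambda\theta^{e}$ with $\lambda\in k^{\times}$, $P(T)=c_{d}(T-r)^{d}$, $n=de$ and $c_{d}\lambda^{d}=1$.

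Finally set $\vartheta:=(g-r)/\lambda$, which visibly lies in $K$ — this is precisely where we improve on Theorem~\ref{prop:torus}, which only produced $\theta$ in an algebraic extension of $K$. By the previous step $\vartheta=\theta^{e}$, so $f=\theta^{n}=\theta^{de}=\vartheta^{d}$ and $g=r+\lambda\vartheta$ with $r,\lambda\in k$. Differentiating $f=\vartheta^{d}$ and using $f'/f=a$ yields $\vartheta'/\vartheta=a/d\in k$. This is the claimed conclusion, with the $\theta$, $n$, $c$, $d$ of the statement taken to be $\vartheta$, $\deg P$, $r$, $\lambda$.
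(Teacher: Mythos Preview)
Your proof is correct and follows essentially the same route as the paper's: reduce to the transcendental case, invoke Theorem~\ref{prop:torus} to get $f=\theta^{n}$ and $g\in k[\theta,\theta^{-1}]$, factor $P$ over $k$, use unique factorization to see that each $g-r_i$ is a monomial in $\theta$, and conclude by subtraction that all the roots and exponents coincide. The only cosmetic difference is that you insert a support argument to first place $g$ in $k[\theta]$ before appealing to the UFD property, whereas the paper works directly in $k[\theta,\theta^{-1}]$; your explicit verification that the new $\vartheta=(g-r)/\lambda$ lies in $K$ and satisfies $\vartheta'/\vartheta\in k$ is a nice touch that the paper leaves implicit.
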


\begin{pproof}
If $f$ is algebraic over $k$, $g$ is algebraic over $k$ as well and, hence, belongs to $k$. 

So, we can and will assume that $f$ is transcendental over $k$. Theorem~\ref{prop:torus} ensures that there exists $\theta$ and an integer $n \in \mathbb{Z}$ such that 
$$
f=\theta^{n}
\text{ and }
g
\in k[\theta,\theta^{-1}]. 
$$

Since $k$ is algebraically closed, we may factor $P(Y)$ as a product of linear factors and, hence, the equation $P(g)=f$ can be rewritten as 
\[e_{1} \prod_{i=1}^\ell (g  -d_i) = \theta^n\]
for some $e_{1}$ and $d_i$ in $k$. Since $\theta$ is transcendental over $k$, the unique factorization property of  $k[\theta,\theta^{-1}]$ implies that each factor in the above equality belong to $k \theta^{\mathbb{Z}}$; in particular,  we have for each $i$
$$
g -d_i=e_{i} \theta^{r_i}
$$
for some $e_{i} \in k^{\times}$ and some $r_i \in \mathbb{Z}$. For any pair $r_i,r_j, i\neq j$, we have $d_i - d_j = e_i\theta^{r_i} - e_j\theta^{r_j}$. Since $\theta$ is transcendental, we must have $d_i = d_j := d,  e_i = e_j := e$ and $r_i = r_j:= r$.  So $g = e\theta^{r} + d$ and $e,d\in k$.\\
In particular we have that $n = rn'$ for some  $n' \in \mathbb{Z}_{\geq 0}$.  Whence the desired result (the $\theta$ in the statement of the Corollary is the $\theta^{r}$ of the proof, and the $n$ of the statement is the $n'$ of the proof).   
\end{pproof}

\begin{cor}\label{cor f dans k de g}
Let $f,g \in K$ satisfy some nonzero homogeneous linear differential equations over $k$. Assume that
$$
f \in k(g). 
$$
Then,
\begin{itemize}
\item either  $f \in k[g]$,
\item or there exists $\theta$ in  $K$ such that $\theta'/\theta \in k$ and 
$$
g= c+d \theta \text{ and } f \in k[\theta,\theta^{-1}]
$$ 
for some $c,d \in k$. 
\end{itemize}
\end{cor}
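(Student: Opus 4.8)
The plan is to reduce the statement $f\in k(g)$ to the situation already handled in Corollary~\ref{cor:f eg Pg}. Write $f = P(g)/Q(g)$ with $P,Q\in k[Y]$ coprime and $Q\neq 0$. First I would dispose of the trivial case $g\in k$: then $f\in k\subset k[g]$ and we are in the first alternative. So assume $g\notin k$; since $g$ satisfies a nonzero linear differential equation over $k$ and $k$ is algebraically closed with the same constants as $K$, $g$ is transcendental over $k$ (an algebraic element satisfying a linear differential equation over $k$ lies in $k$). Hence $P(g)/Q(g)$ is a genuine rational-function expression in the transcendental $g$, and $k(g)$ is a rational function field.

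Next, the key observation is that $\tilde f := 1/f = Q(g)/P(g)$ also satisfies a nonzero homogeneous linear differential equation over $k$: indeed $f^{-1}$ is a rational function of the holonomic element $f$ over $k$, hence holonomic, because the $k$-vector space spanned by the derivatives of $f^{-1}$ sits inside the finite-dimensional $k$-space spanned by monomials $f^{(i_1)}\cdots f^{(i_s)} \cdot f^{-m}$ arising from the Picard--Vessiot ring of $f$ localized at $f$; more simply, $f$ lies in a Picard--Vessiot extension $E/k$, so does $f^{-1}\in E$, and every element of a Picard--Vessiot extension over $k$ is holonomic over $k$. Thus both $f$ and $\tilde f=1/f$ are holonomic and both lie in $k(g)$.

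Now I split according to the degrees. If $\deg Q = 0$, then $f = P(g)\in k[g]$ and we are in the first alternative; done. If $\deg P = 0$, apply the same reasoning to $\tilde f = c\,Q(g)\in k[g]$, so $\tilde f\in k[g]$ with $\tilde f$ holonomic and nonconstant (as $g$ is transcendental, $Q$ nonconstant); but then, replacing the roles, we may instead invoke Corollary~\ref{cor:f eg Pg} directly. The remaining case is $\deg P\geq 1$ and $\deg Q\geq 1$. Here neither $f$ nor $\tilde f$ is a polynomial in $g$. The strategy is: consider the Picard--Vessiot extension $E$ over $k$ generated by (the derivatives of) $f$ and $g$ together, with Galois group $G$; the one-dimensional nature of the problem forces $G$ to be a torus. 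Concretely, $k(g)=k(f)$-algebraically: since $f\in k(g)$ and $f\notin k$, $f$ is transcendental and $[k(g):k(f)] = \max(\deg P,\deg Q) =: N <\infty$, so $k(g)$ is a finite extension of $k(f)$, hence $g$ and all its derivatives are algebraic over $k(f)$. Running the argument of Theorem~\ref{prop:torus} with the roles of ``the order-one object'' and ``the holonomic object'' interchanged — using that $G_{k(f)/k}$ need not a priori be a torus, so this is where care is needed — I would instead argue as follows: $f$ and $g$ are algebraically dependent over $k$ (both lie in $k(g)$), $g$ satisfies a linear ODE over $k$; but $f$ need not satisfy a first-order equation, so Theorem~\ref{prop:torus} does not apply directly.

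Instead, the clean route is: since $f\in k(g)$ and $g$ is holonomic, $f$ is holonomic; and since both $f$ and $1/f$ are holonomic and lie in $k(g)$, the field $k(f) = k(1/f)$ is a subfield of $k(g)$ over which $g$ is algebraic of degree $N$. Consider the differential Galois group $G$ of a Picard--Vessiot extension containing $g$ (hence $f$); the $C$-span $V$ of $Gg$ is finite-dimensional and $G$ acts on it; meanwhile $k(f)$ being of finite index in $k(g)$ means the subextension of the Picard--Vessiot extension corresponding to $g$ is a finite extension of that corresponding to $f$. The main obstacle — and the step I expect to be delicate — is to show that $f$ in fact satisfies a \emph{first-order} linear equation over $k$, i.e. that $f'/f\in k$: once this is known, Theorem~\ref{prop:torus} applies to the pair $(f,g)$ and yields $\theta$ with $\theta'/\theta\in k$, $f=\theta^m$, $g\in k[\theta,\theta^{-1}]$, and then the unique-factorization argument of Corollary~\ref{cor:f eg Pg} (applied to $g\in k[\theta,\theta^{-1}]$, which is now a Laurent polynomial in the transcendental $\theta$, combined with $f = P(g)/Q(g)$ forcing $g$ to be a monomial in $\theta$ up to $k$-affine change) gives $g = c + d\vartheta$ and $f\in k[\vartheta,\vartheta^{-1}]$ for a suitable power $\vartheta$ of $\theta$. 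To prove $f'/f\in k$: from $f = P(g)/Q(g)$ with $P,Q$ coprime of degrees $\geq 1$ in transcendental $g$, differentiate to get $f'/f = P'(g)g'/P(g) - Q'(g)g'/Q(g) \in k(g)$; this lies in $k(f) \subset k(g)$, and a valuation/degree count at the (finitely many) places of $k(g)/k$ where $P$ or $Q$ vanish, using that $g$ is holonomic so $g'\in k[g]\cdot(\text{something})$ — more precisely using that $g'$ has a pole of order $\le 1$ at each such place coming only from $g$ being holonomic of a special shape — is not automatic, so I would instead bound the \emph{degree} of $f'/f$ as a rational function of $g$: since $f$ and $1/f$ are both holonomic, repeated differentiation of $f$ stays in a finite-dimensional $k$-space, which constrains $\deg_g(f^{(i)}/f)$ to be bounded independently of $i$; combined with $f^{(i)}/f$ being obtained from $f'/f$ by the recursion $(f^{(i+1)}/f) = (f^{(i)}/f)' + (f^{(i)}/f)(f'/f)$, an induction shows $f'/f$ must be such that iterating $u\mapsto u' + u(f'/f)$ keeps degrees bounded, which forces $f'/f\in k$ (any nonconstant rational function of $g$ would blow up the degrees or the pole orders under this recursion). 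This degree-boundedness argument is the crux; modulo it, everything else is the bookkeeping already performed in Corollaries~\ref{cor:f eg Pg} and \ref{cor f dans k de g}'s predecessors.
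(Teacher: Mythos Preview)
Your proposal contains a genuine error that undermines the whole strategy. You assert that ``every element of a Picard--Vessiot extension over $k$ is holonomic over $k$'', and use this to conclude that $1/f$ is holonomic. This is false: only elements of the Picard--Vessiot \emph{ring} $k[\mathfrak{Y},\det(\mathfrak{Y})^{-1}]$ are guaranteed to be holonomic (they span finite-dimensional $G$-stable $C$-spaces, cf.\ \cite[Cor.~1.38]{VdPS}), not arbitrary elements of the fraction field. A concrete counterexample: $\log x$ is holonomic over $\CC(x)$, its Picard--Vessiot extension is $\CC(x,\log x)$, but $1/\log x$ is not holonomic, since its successive derivatives involve $(\log x)^{-m}$ for unbounded $m$. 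In fact, the very content of the Harris--Sibuya theorem \cite{HarrisSibuya85} is that \emph{if} both $f$ and $1/f$ happen to be holonomic, \emph{then} $f'/f\in k$; so assuming the former to deduce the latter is circular unless you have an independent reason. Your subsequent ``degree-boundedness'' argument is predicated on this same false input (and has the additional problem that $g'$ need not lie in $k(g)$, so ``degree in $g$'' of $f^{(i)}/f$ is not well-defined without further work).

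The paper's proof avoids this trap by a neat B\'ezout trick: writing $f=A(g)/B(g)$ with $\gcd(A,B)=1$, choose $U,V\in k[X]$ with $AU+BV=1$; then $fU(g)+V(g)=1/B(g)$. The left-hand side is holonomic because $f$, $U(g)$, $V(g)$ are (polynomials in holonomic elements, and products/sums of holonomic elements are holonomic). Since $B(g)$ is also holonomic, Harris--Sibuya now applies to $B(g)$ --- not to $f$ --- and yields $B(g)'/B(g)\in k$. Then Corollary~\ref{cor:f eg Pg} applied to $B(g)=P(g)$ (with $P=B$) gives, when $B$ is nonconstant, the element $\theta$ with $g=c+d\theta$ and $B(g)=\theta^n$, whence $f=A(g)/\theta^n\in k[\theta,\theta^{-1}]$. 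The point is that one manufactures a holonomic element whose \emph{inverse} is visibly holonomic, namely the denominator $B(g)$, rather than trying to force $1/f$ itself to be holonomic.
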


\begin{pproof}
 Consider $R \in k(X)$ such that $f=R(g)$. Let   $A,B \in k[X]$ be such that $\operatorname{gcd}(A,B)=1$ and $R=A/B$. B\'{e}zout's {Identity states} that there exist $U,V \in k[X]$ such that $AU+BV=1$, so $RU+V=1/B$. It follows that $fU(g)+V(g)=R(g)U(g)+V(g)=1/B(g)$ satisfies a nonzero homogeneous linear differential equations over $k$. By  \cite{HarrisSibuya85} (see also \cite{Sperber}, \cite{Singer86}), we have $B(g)'/B(g) \in k$ and hence we can apply Corollary \ref{cor:f eg Pg}: if $B$ is nonconstant then there exists $\theta$ such that 
$$
B(g)=\theta^{n} \text{ and } g=c+d\theta 
$$ 
for some $n \in \mathbb{Z}_{\geq 0}$ and some $c,d \in k$. 

 Of course, if $B$ is constant, then $f \in k[g]$. 
\end{pproof}

\section{The simply connected case}\label{sec:simpcon}

We begin with some facts concerning simply connected\footnote{Here, simply connected means : a connected algebraic group $G$ not admitting any non-trivial isogeny $\phi:H \rightarrow G$ where $H$ is also a connected algebraic group. For semisimple algebraic groups over the field of complex numbers this definition is equivalent to the topological one.}   algebraic groups.

\begin{lem}~\label{lem:notorus} A simply connected  algebraic group $G$ has no torus quotient, that is, there is no nontrivial homomorphism $\phi:G \rightarrow T = (C^*)^n$. In particular, any multiplicative character of $G$ is trivial.  \end{lem}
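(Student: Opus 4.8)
The plan is to argue by contradiction: from a would-be nontrivial homomorphism $\phi : G \to T$ into a torus, I will manufacture a nontrivial isogeny onto $G$ from a \emph{connected} algebraic group, contradicting simple connectedness. First I would pass to the image: $S := \phi(G)$ is the image of a morphism of algebraic groups, hence a closed subgroup of the torus $T$, and being connected it is a subtorus; it is nontrivial precisely when $\phi$ is. Set $N := \ker \phi$ and let $N^{\circ}$ be its identity component. Then $\bar G := G/N^{\circ}$ is a connected algebraic group, and the canonical map $\bar G \to G/N \cong S$ has finite kernel $N/N^{\circ}$, i.e. is an isogeny onto the torus $S$. By Lemma~\ref{lem:torus isogeny}, $\bar G$ is itself a torus, of dimension $\dim S \geq 1$. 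So the situation is reduced to: a surjective homomorphism $\rho : G \to \bar G$ onto a nontrivial torus \emph{whose kernel $N^{\circ}$ is connected}.

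Next, for an integer $m \geq 2$, I would form the fiber product
$$
\widetilde{G} \;=\; \{(g,h) \in G \times \bar G \ \vert \ \rho(g) = h^{m}\},
$$
which is the kernel of the homomorphism $G \times \bar G \to \bar G$, $(g,h) \mapsto \rho(g)h^{-m}$ (a homomorphism since $\bar G$ is abelian), hence a closed subgroup of $G \times \bar G$. The second projection $q_{2} : \widetilde{G} \to \bar G$ is surjective because $\rho$ is, with kernel $N^{\circ} \times \{1\}$; since $N^{\circ}$ and $\bar G$ are connected, $\widetilde{G}$ is connected, as an extension of a connected group by a connected group. The first projection $q_{1} : \widetilde{G} \to G$ is surjective because $\rho$ is surjective and the $m$-th power map $[m] : \bar G \to \bar G$ is surjective ($C$ being algebraically closed), and its kernel is $\{1\} \times \bar G[m]$, a finite group of order $m^{\dim \bar G} \geq 2$. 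Thus $q_{1}$ is a nontrivial isogeny from the connected algebraic group $\widetilde{G}$ onto $G$, contradicting simple connectedness of $G$. Hence no nontrivial $\phi : G \to T$ exists; the "in particular" is the case $T = C^{*}$.

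The one step I expect to require care is the connectedness of $\widetilde{G}$: pulling back an isogeny of a torus along an arbitrary homomorphism can easily produce a disconnected group, and this is exactly why the first paragraph replaces an arbitrary character of $G$ by the projection $\rho$ onto a torus quotient \emph{with connected kernel} — which is precisely what Lemma~\ref{lem:torus isogeny} provides. Everything else is standard and can be quoted from \cite{Humphreys}: the image of a morphism of algebraic groups is a closed subgroup, a connected closed subgroup of a torus is a torus, the power map of a torus over an algebraically closed field is surjective, and an extension of a connected group by a connected group is connected.
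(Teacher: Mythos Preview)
Your argument is correct, and it takes a genuinely different route from the paper's. The paper quotes Hochschild \cite[Chapter XVIII, Proposition 3.2]{Hochschild} to say that the radical $R(G)$ of a simply connected group is unipotent; then any $\phi:G\to T$ kills $R(G)$ (unipotents map to unipotents, and the only unipotent in a torus is $1$), hence factors through the semisimple group $H=G/R(G)$; but $H=H'$, so $\phi(G)\subset T'=\{1\}$.

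Your proof instead works directly from the paper's own definition of ``simply connected'' (no nontrivial isogeny from a connected group) together with Lemma~\ref{lem:torus isogeny}, manufacturing the forbidden cover explicitly as a fiber product over the $m$-th power map of the torus quotient. The key point you isolate --- replacing the original $\phi$ by the projection $\rho:G\to\bar G=G/N^{\circ}$ onto a torus quotient with \emph{connected} kernel, so that the pullback $\widetilde G$ is connected --- is exactly what makes this work, and your invocation of Lemma~\ref{lem:torus isogeny} to see that $\bar G$ is a torus is legitimate since that lemma precedes the present one. Your approach has the advantage of being self-contained within the paper (no appeal to the structure theory in Hochschild); the paper's approach is shorter and explains structurally where the obstruction lies (in the semisimple quotient), at the cost of importing a nontrivial external fact about radicals of simply connected groups.
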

\begin{proof} From \cite[Chapter XVIII, Proposition 3.2]{Hochschild}, we know that the radical $R(G)$ is unipotent. Let $\phi:G\rightarrow T=(C^*)^n$ be a homomorphism. Since homomorpisms preserve unipotent elements, we must have that $R(G) \subset \Ker(\phi)$. Therefore $\phi$ factors through $H:=G/R(G)$. {Since $H$ is semisimple\footnote{$R(G)$ is the smallest closed normal subgroup $K$ of $G$ such that $G/K$ is semisimple, cf. \cite[Section 19.5 and Theorem 27.5]{Humphreys}}, it coincides with its derived subgroup $H'$. Therefore, $\phi$ takes its values in the derived subgroup $T'$ of $T$. Since $T'$ is trivial, we have that $\phi$ is trivial.} 
\end{proof}

{In the proof of the following proposition, we give two arguments - one based on the Picard-Vessiot approach to the Galois theory and the other based on the tannakian approach. The tannakian approach plays the dominant role in what follows and  this allows the reader to compare the two approaches in this simpler situation.}

\begin{prop} \label{prop sln}
Let {$f \in K^{\times}\setminus k$} satisfy a nonzero homogeneous linear differential equation $L(f)=0$ over $k$ of order $n\geq 1$ and assume that the differential Galois group $G_{L}$ over $k$ of the latter equation is simply connected. Let $g \in K$ satisfy a nonzero homogeneous linear differential equation over $k$. Assume that $f$ and $g$ are algebraically dependent over $k$. 
\begin{enumerate}
\item[1.] If $F$ is a differential field extension of $k(f,g)$ with field of constants $C$ containing a basis of solutions $f_{1}=f,f_{2},\ldots,f_{n}$ of $L$, then 
$$
g \in k[W(f_{1},\ldots,f_{n})]
$$
where $W(f_{1},\ldots,f_{n})$ denotes the wronskian matrix associated to $f_{1},\ldots,f_{n}$.
\item[2.] Assume 
\vspace{.05in}
\begin{center}
(*) $k(f)$ is relatively algebraically closed in  $k(W(f_{1},\ldots,f_{n}))$.
\end{center} 
\vspace{.05in}
Then,
$$
g \in k[f].
$$ 
%\end{itemize}
\end{enumerate}
\end{prop}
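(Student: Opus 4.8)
The plan is to translate the problem into the differential Galois theory over the fixed base $k$ and to exploit two features of simply connected groups: they have no nontrivial characters (Lemma~\ref{lem:notorus}), and — less obviously — any surjection from a \emph{connected} algebraic group onto a simply connected one has \emph{connected} kernel. First I would make the easy reductions: we may assume $g\neq 0$, and since $k$ is algebraically closed and $f\notin k$, $f$ is transcendental over $k$, so algebraic dependence of $f$ and $g$ means $g$ is algebraic over $k(f)$. Choose a nonzero operator $M$ over $k$ with $M(g)=0$, of order $r$, and work inside a Picard–Vessiot extension $E_{0}$ of $k$ for $L\oplus M$ containing $f=f_{1},f_{2},\dots,f_{n}$ (a basis of solutions of $L$) and $g=g_{1},g_{2},\dots,g_{r}$ (a basis of solutions of $M$); its Galois group $G:=\Gal(E_{0}/k)$ is connected because $k$ is algebraically closed. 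Since the wronskian determinant $W_{L}$ of $f_{1},\dots,f_{n}$ satisfies $W_{L}'/W_{L}\in k$ and is a semi-invariant of $G_{L}$ for the character $\det$, which is trivial by Lemma~\ref{lem:notorus}, one gets $W_{L}\in k$; hence the Picard–Vessiot ring of $L$ inside $E_{0}$ is exactly $R_{L}:=k[W(f_{1},\dots,f_{n})]$, whose fraction field $\widehat{k}_{L}$ is a Picard–Vessiot field for $L$ over $k$ with group $G_{L}$. (The ring $R_{L}$ does not depend on the choice of basis, so this matches the $F$ of the statement.)

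The heart of the argument is to show $g\in\widehat{k}_{L}$. Put $H_{L}=\Gal(E_{0}/\widehat{k}_{L})=\ker(G\twoheadrightarrow G_{L})$, a normal subgroup of $G$. The induced map $G/H_{L}^{\circ}\to G/H_{L}=G_{L}$ is an isogeny from a connected group onto $G_{L}$, hence — by the definition of simple connectedness — an isomorphism, so $H_{L}$ is connected. Now $g$ is algebraic over $k(f)\subseteq\widehat{k}_{L}=E_{0}^{H_{L}}$; so the orbit $H_{L}\cdot g$ is contained in the finite set of roots of the minimal polynomial of $g$ over $\widehat{k}_{L}$, and $H_{L}$ acts on it through a morphism to a finite group, which must be trivial since $H_{L}$ is connected. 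Hence $H_{L}$ fixes $g$, i.e.\ $g\in E_{0}^{H_{L}}=\widehat{k}_{L}$. \emph{This is the step where the hypothesis is genuinely used and is the main obstacle:} without connectedness of $H_{L}$, $g$ could acquire nontrivial conjugates over $k(f)$, exactly the phenomenon that occurs for tori in Theorem~\ref{prop:torus} and for $\GL_{n}$ in Section~\ref{sec:specialcase}.

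To finish part~(1), note that for $\sigma\in G_{L}$, lifting $\sigma$ to $G$ and using that $M$ has coefficients in $k$ shows $\sigma(g)$ lies in the $r$-dimensional solution space of $M$ in $E_{0}$; therefore $V:=\operatorname{span}_{C}(G_{L}\cdot g)$ is a finite-dimensional $G_{L}$-submodule of $\widehat{k}_{L}=\operatorname{Frac}(R_{L})$. Since $k$ is algebraically closed, $\operatorname{Spec}R_{L}$ is a trivial $G_{L}$-torsor, so $R_{L}\cong k[G_{L}]$ with $G_{L}$ acting by translation; the common domain of definition on $G_{L}$ of a finite spanning set of $V$ is translation-stable, hence (by homogeneity of $G_{L}$) all of $G_{L}$, so $V\subseteq R_{L}$ and in particular $g\in R_{L}=k[W(f_{1},\dots,f_{n})]$. (In tannakian language this is the same argument: $H_{L}$ fixes $g$ and is normal, so it acts trivially on $\operatorname{span}_{C}(G\cdot g)$, i.e.\ the differential module generated by $g$ lies in $\langle\mathcal{M}_{L}\rangle$, whose solution space is already defined over $R_{L}$; being a solution that lies in the Picard–Vessiot field of $L$, $g$ then lies in $R_{L}$.)

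For part~(2), by part~(1) we have $g\in R_{L}\subseteq\widehat{k}_{L}$; since $g$ is algebraic over $k(f)$ and, by $(*)$, $k(f)$ is relatively algebraically closed in $\widehat{k}_{L}=k(W(f_{1},\dots,f_{n}))$, we conclude $g\in k(f)$. I would then apply Corollary~\ref{cor f dans k de g} with the roles of $f$ and $g$ interchanged: either $g\in k[f]$, as desired, or there is $\theta\in K$ with $\theta'/\theta\in k$, $f=c+d\theta$ and $g\in k[\theta,\theta^{-1}]$ for some $c,d\in k$. In this second case $\theta=(f-c)/d\in k(f)\subseteq\widehat{k}_{L}$ and $\theta'/\theta\in k$, so $\sigma\mapsto\sigma(\theta)/\theta$ is a character of $G_{L}$, hence trivial by Lemma~\ref{lem:notorus}; thus $\theta\in\widehat{k}_{L}^{\,G_{L}}=k$, whence $f=c+d\theta\in k$, contradicting $f\notin k$. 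Therefore $g\in k[f]$.
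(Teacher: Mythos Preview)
Your proof is correct and follows essentially the same architecture as the paper's: form the Picard--Vessiot extension for $L\oplus M$, analyze the kernel of the restriction $G\twoheadrightarrow G_{L}$, use simple connectedness to trivialize it, and then use that a finite-dimensional $G_{L}$-stable subspace of the Picard--Vessiot field lies in the Picard--Vessiot ring (your torsor argument is precisely the content of \cite[Corollary~1.38]{VdPS}, which the paper invokes). Part~(2) is identical to the paper's.

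There is one genuine, if small, difference worth noting. The paper takes $M$ of \emph{minimal} order annihilating $g$, so that $\operatorname{span}_{C}(G\cdot g)$ equals the full solution space of $M$; this forces $E$ to be a finite extension of $E_{L}$, hence the kernel $H$ is finite, and simple connectedness then gives $H$ trivial and $E=E_{L}$. You instead bypass minimality entirely: you observe directly that $G/H_{L}^{\circ}\to G_{L}$ is an isogeny from a connected group, so $H_{L}$ is connected, and then use only that $g$ itself is algebraic over $k(f)\subseteq\widehat{k}_{L}$ to conclude $H_{L}$ fixes $g$. Your route is slightly cleaner---it never needs $E_{0}=\widehat{k}_{L}$ globally, only $g\in\widehat{k}_{L}$---at the cost of a separate finite-dimensionality check for $\operatorname{span}_{C}(G_{L}\cdot g)$, which you handle correctly via the solution space of $M$.
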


\begin{pproof}
1. Let $Y'=A_{L}Y$ be the linear differential system corresponding to $L$. 

Let $M$ be a nonzero linear differential operator with coefficients in $k$ annihilating $g$ and of minimal order $m$ for this property. Let $Y'=A_{M}Y$ be the corresponding linear differential system.

We let $E$ be a Picard-Vessiot extension over $k$ of the differential system  
\begin{equation}\label{diff syst direct sum bis}
 Y'=
\begin{pmatrix}
 A_{L}&0\\
 0&A_{M}
\end{pmatrix}
Y
\end{equation}
containing $f$ and $g$. Let $G$ be the corresponding differential Galois group over $k$. The differential system \eqref{diff syst direct sum bis} has a fundamental solution of the form  
$$
\mathfrak{Y} 
=
\begin{pmatrix}
 \mathfrak{Y}_{L} & 0\\
 0 & \mathfrak{Y}_{M} 
\end{pmatrix} 
\in \GL_{m+n}(E)
$$
where 
$$
\mathfrak{Y}_{L} =
\begin{pmatrix}
 f_{1}&\cdots &f_{n}\\
 f_{1}'&\cdots &f_{n}'\\
  \vdots&\ddots &\vdots\\
  f_{1}^{(n-1)}&\cdots &f_{n}^{(n-1)}
\end{pmatrix} 
\in \GL_{n}(E)
$$
and 
$$ 
\mathfrak{Y}_{M} =
\begin{pmatrix}
 g_{1}&\cdots &g_{m}\\
 g_{1}'&\cdots &g_{m}'\\
  \vdots&\ddots &\vdots\\
  g_{1}^{(m-1)}&\cdots &g_{m}^{(m-1)}
\end{pmatrix} 
\in \GL_{m}(E)
$$
with $f_{1}=f$ and $g_{1}=g$ (thus, $E$ is the field extension of $k$ generated by the entries of  $\mathfrak{Y}_{L}$ and $\mathfrak{Y}_{M}$). 
The field extension $E_{L}$ (resp. $E_{M}$) of $k$ generated by the $f_{i}^{(j)}$ (resp. by the $g_{i}^{(j)}$) is a  Picard-Vessiot extension over $k$ for $Y'=A_{L}Y$ (resp. $Y'=A_{M}Y$); we let $G_{L}$ (resp. $G_{M}$) be the corresponding differential Galois group over $k$. 

By minimality of $M$, we have 
$$
\operatorname{span}_{C} G g
=
\operatorname{span}_{C} G_{M} g
=
\operatorname{span}_{C}\{g_{1},\ldots,g_{m}\}
$$ 
(note that $Gg=G_{M}g$ because the restriction morphism $\pi_{M} : G \rightarrow G_{M}$ is surjective). 
Moreover, $E_{L}$ is left invariant by $G$. These facts, together with the fact that $g$ is algebraic over $E_{L}$ (because $f$ and $g$ are algebraically dependent over $k$ and $f$ is transcendental over $k$), imply that the $g^{(j)}_{i}$ are algebraic over $E_{L}$ as well, {\it i.e.}, $E$ is a finite extension of $E_{L}$. 

Again, the differential Galois theory implies  that the restriction morphism  
$
\pi_{L} : G \rightarrow G_{L}
$
is surjective and hence induces a short exact sequence 
$$
0 \rightarrow H \rightarrow G \xrightarrow[]{\pi_{L}} G_{L} \rightarrow 0. 
$$
The kernel $H$ (which is the group of differential fields automorphisms of $E$ over $E_{L}$) is finite because $E$ is a finite extension of $E_{L}$.  

Since $G_{L}$ is a connected and simply connected algebraic group, the kernel $H$ is trivial and 
$
\pi_{L} : G \rightarrow G_{L}
$
is an isomorphism. {In particular, since $H$ is trivial, the Galois correspondence implies that $E =E_L$. The $C$-span of the entries of $\mathfrak{Y}_{M}$ form a $G = G_L$-invariant finite dimensional $C$-vector space. It follows from \cite[Corollary 1.38]{VdPS} that all of these entries, and in particular $g = g_1$, lie in the Picard-Vessiot ring $k[\mathfrak{Y}_L, \det(\mathfrak{Y}_L)^{-1}]$.} 
Note that $G_L$ leaves the $C$-line spanned by $\det(\mathfrak{Y}_{L})$ invariant and, since all characters of $G_L$ are trivial {(see Lemma \ref{lem:notorus})}, it must leave $\det(\mathfrak{Y}_{L})$ fixed. Therefore $\det(\mathfrak{Y}_{L}) \in k$ and so $k[\mathfrak{Y}_{L},\det(\mathfrak{Y}_{L})^{-1}]= k[\mathfrak{Y}_{L}]$. This proves the first claim of the Proposition.

{We now turn to a tannakian proof of the first claim. As already noted the fact that $G_{L}$ is a connected and simply connected algebraic group implies that the kernel $H$ is trivial and 
$
\pi_{L} : G \rightarrow G_{L}
$
is an isomorphism. It follows that there exists an algebraic group morphism $u : G_{L} \rightarrow G_{M}$ such that 
$$
G =
\operatorname{im} (\pi_{L} \oplus \pi_{M})
= \operatorname{im} (\operatorname{id}_{G_{L}} \oplus u).
$$

We now adopt the tannakian point of view (see Appendix~\ref{tannaka}):  the category of rational linear representations of $G$ is equivalent to the tannakian category generated by the differential system \eqref{diff syst direct sum bis}. In this equivalence, the differential system $Y'=A_{L}Y$ (resp. $Y'=A_{M}Y$) corresponds to the restriction morphism 
$
\pi_{L} : G \rightarrow G_{L}
$ 
(resp. 
$
\pi_{M} : G \rightarrow G_{M}
$). The factorization  
$$
\pi_{M}=u \circ \pi_{L}
$$ 
ensures that $Y'=A_{M}Y$ belongs to the tannakian category generated by $Y'=A_{L}Y$. It follows that the entries of the solutions of $Y'=A_{M}Y$ belong to the Picard-Vessiot ring $R_{L}=k[\mathfrak{Y}_{L},\det(\mathfrak{Y}_{L})^{-1}]$ of $Y'=A_{L}Y$.    As before $G_L$ leaves the line spanned by $\det(\mathfrak{Y}_{L})$ invariant and, since all characters of $G_L$ are trivial {(see Lemma \ref{lem:notorus})}, it must leave $\det(\mathfrak{Y}_{L})$ fixed. Therefore $\det(\mathfrak{Y}_{L}) \in k$ and so $k[\mathfrak{Y}_{L},\det(\mathfrak{Y}_{L})^{-1}]= k[\mathfrak{Y}_{L}]$. This yields the second proof of the first claim of the Proposition. }

2.{If (*) is satisfied, then $g$ belongs to $k(f)$ (because $g$ is algebraic over $k(f)$).} Corollary~\ref{cor f dans k de g} implies that either  $g \in k[f]$,
or there exists  $\theta$ in  $K$ such that $\theta'/\theta \in k$ and 
$f= c+d \theta \text{ and } g \in k[\theta,\theta^{-1}]$ 
for some $c,d \in k$. {We claim that the latter case is impossible. Indeed, otherwise the line $C \theta$ spanned by $\theta$ would be left   invariant by $G_{L}$.  But Lemma~\ref{lem:notorus} states that any character of $G_L$ is trivial. So $G_{L}$ would act as the identity on $\theta$. By Galois correspondance, this would imply that $\theta \in k$ and, hence, that $f \in k$. This is excluded by the assumption.}  
\end{pproof}

In order to draw the conclusion of  Proposition~\ref{prop case 2}.2, one needs to verify condition (*). We will give two examples of this below but first mention a general approach.  

Since we are assuming $k$ is algebraically closed, the Galois group $G$ of any Picard-Vessiot extension of $k$ is connected and the Picard-Vessiot ring of any linear differential equation over $k$ is $k$-isomorphic to the coordinate ring  $k[G\otimes k]$ of the group $G$ over $k$ (\cite[Corollary 5.16]{Ma}, \cite[Proposition 1.31, Corollary 1.32]{VdPS}). In particular, this implies that the Picard-Vessiot field $K$ is isomorphic to the function field of $G$.  This latter field is known to be a purely transcendental extension of $k$ (\cite[Remark 14.14]{Borel}). Therefore, if one can find algebraically independent elements $z_1 = f, z_2, \ldots , z_m$ such that $K = k(z_1, \ldots , z_m)$ then $k(f)$ will be relatively algebraically closed in $K$.  Note that in Proposition~\ref{prop case 2} the field $k(W(f_1, \ldots , f_n))$ is the Picard-Vessiot field of the equation $L(y) = 0$.

\subsection{When $G_{L}=\SL_n(C)$}

In this section, we will prove 
 \begin{prop} \label{prop  case 2} Let $f \in K^{\times} \setminus k$ satisfy an $n^{th}$ order nonzero homogeneous linear differential equation $L(f)=0$ over $k$ and assume that the differential Galois group $G_{L}$ over $k$ of this equation is $\SL_{n}(C)$ seen as a subgroup of $\GL_n(C)$. Let $g \in K$ satisfy a nonzero homogeneous linear differential equation over $k$. Assume that $f$ and $g$ are algebraically dependent over $k$.  Then, 
$$
g \in k[f].
$$
\end{prop}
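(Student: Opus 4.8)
The plan is to derive Proposition~\ref{prop case 2} from Proposition~\ref{prop sln}, whose hypotheses are nearly met: since $\SL_n(C)$ is connected and simply connected, part~1 of Proposition~\ref{prop sln} already gives $g \in k[W(f_1,\ldots,f_n)]$ for a suitable fundamental system $f_1=f,\ldots,f_n$ of $L$ in an appropriate extension $F$ with constants $C$. To get $g \in k[f]$ via part~2, it suffices to verify condition~(*): that $k(f)$ is relatively algebraically closed in the Picard--Vessiot field $k(W(f_1,\ldots,f_n))$ of $L$ over $k$.

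To check (*) I would use the general approach sketched right after Proposition~\ref{prop sln}. Because $k$ is algebraically closed, the Picard--Vessiot field of $L$ is $k$-isomorphic to the function field of $\SL_n$ over $k$, which is a purely transcendental extension of $k$. The key point is therefore to exhibit a transcendence basis of $k(W(f_1,\ldots,f_n))$ over $k$ one of whose members can be taken to be $f=f_1$ itself. Concretely, I expect the entries $f_1,\ldots,f_n$ of the first row of the Wronskian matrix, together with enough other entries (for instance $f_1,\ldots,f_n, f_1',\ldots$, chosen so as to get $\dim \SL_n(C) = n^2-1$ algebraically independent coordinates), to form such a basis: the first row $f_1,\ldots,f_n$ gives a ``generic'' vector on which $\SL_n$ acts, so these $n$ functions are algebraically independent over $k$, and $f_1$ can be singled out as the first coordinate. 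Once $f$ is a member of a transcendence basis of $k(W(f_1,\ldots,f_n))$ over $k$, the field $k(f)$ is automatically relatively algebraically closed there, so (*) holds, and Proposition~\ref{prop sln}.2 yields $g \in k[f]$.

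The main obstacle, I expect, is precisely the algebraic-independence/transcendence-basis argument for the entries of the Wronskian: one must show that the chosen $n^2-1$ entries of $W(f_1,\ldots,f_n)$ are algebraically independent over $k$ (so that they form a transcendence basis of the function field of $\SL_n$, matching its dimension) and that $f_1$ can be extended to such a basis. This is where the specific shape of the representation of $G_L=\SL_n(C)$ on the solution space — it acts as the standard representation on $\mathrm{span}_C\{f_1,\ldots,f_n\}$, with $\det = 1$ forcing $\det W(f_1,\ldots,f_n)\in k$ — has to be used carefully; the relation $\det W \in k$ is the one nontrivial algebraic dependence among the $n^2$ Wronskian entries, and one must argue that it is the only one. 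An alternative, perhaps cleaner, route to (*) is tannakian: the orbit map $\SL_n \to \SL_n \cdot v$ at a generic vector $v$ in the standard representation is an isomorphism onto $\AX^n \setminus \{0\}$ when $n\geq 2$, so $k(f_1,\ldots,f_n)$ is already the full function field $k(\SL_n)$... but since $\dim \SL_n = n^2-1 > n$ for $n \geq 2$ this is false, so one really does need to add further coordinates and argue independence directly. I would therefore carry out the elementary linear-algebra computation showing that a well-chosen set of Wronskian entries including $f_1$ is a transcendence basis, and flag the verification of its cardinality ($=n^2-1$) and independence as the crux of the proof.
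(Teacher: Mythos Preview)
Your approach is correct and is essentially the paper's: verify condition~(*) of Proposition~\ref{prop sln} by showing that $f$ is part of a purely transcendental generating set of the Picard--Vessiot field, then apply part~2. The paper routes this through Lemma~\ref{lem sln} and the isomorphism $R_L\cong k[\SL_n\otimes k]$; because that isomorphism involves a twist $v\in\GL_n(k)$, $f$ corresponds there not to a single coordinate but to a general $k$-linear combination $a=\sum_{i,j}\lambda_{i,j}x_{i,j}$, and the paper then proves that $k(a)$ is relatively algebraically closed in $k(\SL_n\otimes k)$ for \emph{every} such $a$, via an explicit two-case elimination using the relation $\det(x_{i,j})=1$. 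Your plan to work directly with the Wronskian entries---where $f$ is literally the $(1,1)$ entry---sidesteps that complication and reaches the same conclusion by the same mechanism: omit one entry other than $f$ (e.g.\ the $(n,n)$ entry, whose cofactor is the nonzero Wronskian of $f_1,\dots,f_{n-1}$), recover it rationally from the rest via $\det W\in k$, and count to see that the remaining $n^2-1$ entries are algebraically independent generators. One small correction: it is not enough that $f$ lie in a transcendence basis; you need $f$ to lie in a generating set $z_1,\dots,z_{n^2-1}$ with $k(W)=k(z_1,\dots,z_{n^2-1})$, which is exactly what the elimination step gives.
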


The proof will depend on the following Lemma.
\begin{lem}\label{lem sln} Let $K, f, L, G_L, g$ be as in Proposition~\ref{prop sln}.  Assume that $G_{L}$, seen as a subgroup of $\GL_{n}(C)$ via the basis of solutions $f_{1},\ldots,f_{n}$,  also satisfies the following property~: if we let 
$$
k[G_{L} \otimes k]=k[(X_{i,j})_{1 \leq i,j \leq n},\det(X_{i,j})_{1 \leq i \leq n}^{-1}]/I
$$ 
be the coordinate ring of $G_{L} \otimes k$, then, for any $a \in \operatorname{span}_{k} \{X_{i,1} \ \vert \ 1 \leq i,j \leq n \} \mod I$, $k(a)$ is relatively algebraically closed in $k(G_{L} \otimes k)$.  Then, 
$$
g \in k[f].
$$ 
Furthermore, this conclusion holds  if its hypothesis is replaced by the following: assume that $G_{L}$, seen as a subgroup of $\GL_{n}(C)$ via some basis of solutions of $L$, satisfies the following property~:  for any $a \in \operatorname{span}_{k} \{X_{i,j} \ \vert \ 1 \leq i,j \leq n \} \mod I$, $k(a)$ is relatively algebraically closed in $k(G_{L} \otimes k)$. 
\end{lem}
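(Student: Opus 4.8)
The plan is to show that the hypothesis on the coordinate ring of $G_L\otimes k$ forces condition (*) of Proposition~\ref{prop sln}.2 to hold, and then simply invoke that proposition. First I would set up notation via Picard-Vessiot theory exactly as in the proof of Proposition~\ref{prop sln}: since $k$ is algebraically closed, the Picard-Vessiot ring $R_L$ of $L(y)=0$ is $k$-isomorphic to the coordinate ring $k[G_L\otimes k]$, and under this isomorphism a fundamental matrix $\mathfrak{Y}_L=W(f_1,\dots,f_n)$ corresponds to the generic matrix $(X_{i,j}\bmod I)$. Consequently $f=f_1$ corresponds to the element $a_0 := X_{1,1}\bmod I$ (or, if one uses a different normalization of the first column of the Wronskian so that $f_1=f$ sits in the first row, to the appropriate entry $X_{1,1}$), which lies in $\operatorname{span}_k\{X_{i,1}\mid 1\le i\le n\}\bmod I$. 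Here one should be a little careful: the entries of the first column of $W(f_1,\dots,f_n)$ are $f_1,f_1',\dots,f_1^{(n-1)}$, and $f=f_1$ itself is one of them; what the hypothesis gives is that $k(a)$ is relatively algebraically closed in $k(G_L\otimes k)$ for \emph{every} $k$-linear combination $a$ of the first-column entries, in particular for $a=f$.

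Next I would transport this statement through the isomorphism $R_L\cong k[G_L\otimes k]$, which induces an isomorphism on fraction fields $k(W(f_1,\dots,f_n))\cong k(G_L\otimes k)$ carrying $f$ to $a_0$. Relative algebraic closedness is preserved by field isomorphisms, so we conclude that $k(f)$ is relatively algebraically closed in $k(W(f_1,\dots,f_n))$, which is precisely condition (*). Then Proposition~\ref{prop sln}.2 yields $g\in k[f]$, as desired. For the "Furthermore" clause, the argument is the same except that $f=f_1$ now corresponds, after a change of basis of the solution space sending the chosen basis to one whose first element is $f$, to \emph{some} entry (or $k$-linear combination of entries) of the generic matrix; since the stronger hypothesis asserts relative algebraic closedness of $k(a)$ for \emph{all} $a\in\operatorname{span}_k\{X_{i,j}\}\bmod I$, it covers this case too, and the same conclusion follows.

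The main obstacle I anticipate is bookkeeping rather than mathematical depth: one must match up carefully which entry of the generic matrix the element $f$ corresponds to under the Picard-Vessiot isomorphism, keeping track of the distinction between "$f$ is the first solution $f_1$" and "$f$ appears in the first column of the Wronskian", and ensuring that the relevant linear span in the hypothesis genuinely contains the image of $f$. A secondary point to state cleanly is the general principle (already recorded in the remark following Proposition~\ref{prop sln}) that the function field of a connected linear algebraic group over $k$ is purely transcendental over $k$, which is what makes hypotheses of this type verifiable in practice; but for the proof of the Lemma itself this principle is not needed — only the abstract transfer of condition (*) through the isomorphism is required.
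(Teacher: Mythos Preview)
Your approach is essentially the same as the paper's: transport the coordinate-ring hypothesis through the isomorphism $R_L \cong k[G_L\otimes k]$ to verify condition (*), then invoke Proposition~\ref{prop sln}.2; for the ``Furthermore'' clause, reduce the stronger hypothesis to the first one via a $\GL_n(C)$-conjugation of bases. One point of bookkeeping you flag but do not quite resolve: the isomorphism $k[G_L\otimes k]\xrightarrow{\sim} R_L$ is given by $X\mapsto v\,\mathfrak{Y}_L$ for some $v\in\GL_n(k)$ (not necessarily $v=I_n$), so under its inverse $f=(\mathfrak{Y}_L)_{1,1}$ goes to $(v^{-1}X)_{1,1}\bmod I$, a genuine $k$-linear combination of the $X_{i,1}$ rather than $X_{1,1}$ itself; this is exactly why the hypothesis is stated for \emph{all} $a\in\operatorname{span}_k\{X_{i,1}\}\bmod I$.
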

\begin{pproof}  We know that the Picard-Vessiot ring $R_{L}$ of $L$  is isomorphic, as a $k$-algebra, to $k[G_{L} \otimes k]$, more precisely (c.f. \cite[Corollary 5.16]{Ma}, \cite[Proposition 1.31, Corollary 1.32]{VdPS}) there exists $v \in \GL_{n}(k)$ such that the $k$-algebra morphism 
$$
k[(X_{i,j})_{1 \leq i,j \leq n},\det((X_{i,j})_{1 \leq i,j \leq n})^{-1}] \rightarrow R_{L}$$
defined by $ X \mapsto v\mathfrak{Y}_{L}
$  
induces an isomorphism 
$$
k[G_{L} \otimes k]=k[(X_{i,j})_{1 \leq i,j \leq n},\det(X_{i,j})_{1 \leq i,j \leq n}^{-1}]/I \xrightarrow[]{\sim} R_{L}.
$$ 
In this isomorphism, $f$ corresponds to an element of $\operatorname{span}_{k} \{X_{i,1} \ \vert \ 1 \leq i \leq n \} \mod I$, so, the hypothesis on $k(G_{L} \otimes k)$ implies $k(f)$ is relatively algebraically closed in $k(W(f_{1},\ldots,f_{n}))$. Proposition~\ref{prop sln} implies the conclusion. 

The final assertion  follows from this. Indeed, the differential Galois group of $L$ seen as a group of matrices via some basis of solutions of $L$ is conjugate via some element of $\GL_{n}(C)$ to the same differential Galois group seen as a group of matrices via the basis of solutions $f_{1},\ldots,f_{n}$. It follows that if the final hypothesis  is satisfied then the  previous hypothesis is satisfied as well. \end{pproof}

\begin{remark}
 The difference between the two parts of Lemma~\ref{lem sln} is that, in the first part, $G_{L}$ is seen as a group of matrices via the basis of solutions $f_{1},\ldots,f_{n}$ which has $f_{1}=f$ as first element, whereas, in the second, $f$ is not required to be an element of the basis used to see $G_{L}$ as a group of matrices. 
\end{remark}

\begin{remark}
The hypotheses of Lemma~\ref{lem sln} concerning the coordinate ring of $G_{L}\otimes k$ are not hypotheses on the differential Galois group of $L$ as an ``abstract'' algebraic group but hypotheses on the incarnation $G_{L}$ of this differential Galois group as a subgroup of $\GL_{n}(C)$ {\it via} the linear representation given by the choice of a basis of solutions of $L$. In other words, these hypotheses are not necessarily invariant by isomorphism of algebraic groups. For instance, this hypothesis  is satisfied by $G_{L}=\SL_{2}(C) \subset \GL_2(C)$, but not by $\operatorname{Sym}^{3}(\operatorname{SL}_{2}(C)) \subset \GL_4(C)$, despite the fact that these two algebraic groups are isomorphic. The fact that the hypothesis  is satisfied when $G_{L}=\SL_{2}(C)$ is shown in the proof of {Proposition~\ref{prop  case 2}} below. Let us prove that it is not satisfied by $G_{L}=\operatorname{Sym}^{3}(\operatorname{SL}_{2}(C))$.  We have denoted by $\operatorname{Sym}^{3}(\operatorname{SL}_{2}(C))$ the image of $\SL_{2}(C)$ by its (faithful) $3$rd symmetric power representation
$$
 \operatorname{Sym}^{3} :  
 \SL_{2}(C)
 \rightarrow
\GL_{4}(C), 
 $$
which has the form 
$$
  \begin{pmatrix}
 a & b \\
 c & d 
\end{pmatrix}
 \mapsto 
  \begin{pmatrix}
 a^{3} & 3 a^{2}b &3 ab^{2}& b^{3} \\
a^{2}c  & * &*& b^{2}d \\ 
 ac^{2} & * &*& bd^{2}\\
  c^{3} & 3 c^{2}d &3 cd^{2}& d^{3}
\end{pmatrix}.
 $$
Let 
\begin{multline*}
 k[\operatorname{Sym}^{3}(\operatorname{SL}_{2}(C)) \otimes k]=k[(X_{i,j})_{1 \leq i,j \leq 3},\det(X_{i,j})_{1 \leq i,j \leq 3}^{-1}]/I \\ 
 =
k[(x_{i,j})_{1 \leq i,j \leq 3},\det(x_{i,j})_{1 \leq i,j \leq 3}^{-1}]
\end{multline*}
be the coordinate ring of $\operatorname{Sym}^{3}(\operatorname{SL}_{2}(C))$.
Then,  
$$
f=3^{-1}x_{1,2}(x_{2,4}/x_{1,4} - x_{2,1}/x_{1,1}) \in k(\operatorname{Sym}^{3}(\operatorname{SL}_{2}(C)) \otimes k)
$$
satisfies $f^{3}=x_{1,1}$. Therefore, $f$ is algebraic over $k(x_{1,1})$ but does not belong to $k(x_{1,1})$ and, hence, $k(x_{1,1})$ is not relatively algebraically closed in $k(\operatorname{Sym}^{3}(\operatorname{SL}_{2}(C)) \otimes k)$.

This is not a surprise: the conclusion of  Proposition \ref{prop case 2} is not satisfied when $G_{L}=\operatorname{Sym}^{3}(\operatorname{SL}_{2}(C))$. Indeed, let $M$ be a linear differential equation over $k$ of order $2$ with differential Galois group $\SL_{2}(C)$. Consider the third symmetric power $L=M^{\circledS 3}$; this is a linear differential equation over $k$ of order $4$ with differential Galois group $G_{L}=\operatorname{Sym}^{3}(\operatorname{SL}_{2}(C))$. If $(u,v)$ is a basis of solutions of $M$, then $(u^{3},u^{2}v,uv^{2},v^{3})$ is a basis of solutions of $L$. Therefore, $f=u^{3}$ is a solution of $L$, $g=u$ is a solution of $M$, $f \not \in k$ and $f$ and $g$ are algebraically dependent over $k$ but $g \not \in k(f)$. 
 \end{remark}

\begin{proofsln} We shall prove that the hypotheses  of Lemma~\ref{lem sln} are satisfied when $G_{L}=\SL_{n}(C)$. The fact that $\SL_{n}(C)$ is connected and simply connected is well-known (cf. \cite[Section 31.1]{Humphreys}). It remains to prove that, if we denote by 
\begin{multline*}
 k[\SL_{n}(C) \otimes k] 
=
k[(X_{i,j})_{1 \leq i,j \leq n},\det(X_{i,j})_{1 \leq i,j \leq n}^{-1}]/(\det (X_{i,j})_{1 \leq i,j \leq n} -1) \\
=
k[(x_{i,j})_{1 \leq i,j \leq n}]
\end{multline*}
the coordinate ring of $\SL_{n}(C) \otimes k$, we have: for any 
$$
a = \sum_{1 \leq i,j \leq n} \lambda_{i,j} x_{i,j} \in \operatorname{span}_{k} \{x_{i,j} \ \vert \ 1 \leq i,j \leq n \},
$$ 
$k(a)$ is relatively algebraically closed in $k(\SL_{n}(C) \otimes k)$. 

Let us first assume that at least one of the $\lambda_{i,j}$ is zero, say $\lambda_{i_{0},j_{0}} = 0$. Then, we have 
\begin{multline*}
k(\SL_{n}(C) \otimes k)=
k((x_{i,j})_{1 \leq i,j \leq n})\\
=
k((x_{i,j})_{1 \leq i,j \leq n, (i,j) \neq (i_{0},j_{0})}) 
=
k(a,(x_{i,j})_{1 \leq i,j \leq n, (i,j) \neq (i_{0},j_{0}), (i_{1},j_{1})})\end{multline*}
where $(i_{1},j_{1})$ is chosen such that $\lambda_{i_{1},j_{1}} \neq 0$. The second equality above follows from that fact $x_{i_{0},j_{0}} \in k((x_{i,j})_{1 \leq i,j \leq n, (i,j) \neq (i_{0},j_{0})})$ which itself follows from the equality $\det (x_{i,j})_{1 \leq i,j \leq n}=1$.  The third equality follows from the fact that $x_{i_{1},j_{1}}$ belongs to the $k$-span of $a$ and $(x_{i,j})_{1 \leq i,j \leq n, (i,j) \neq (i_{0},j_{0}), (i_{1},j_{1})}$. 
Since 
$$
\operatorname{tr.deg}(k(\SL_{n}(C) \otimes k)/k)=n^{2}-1 $$ 
and 
$$
\sharp \{a,(x_{i,j})_{1 \leq i,j \leq n, (i,j) \neq (i_{0},j_{0}), (i_{1},j_{1})}\} \leq n^{2}-1,
$$ 
we get that the family $(a,(x_{i,j})_{1 \leq i,j \leq n, (i,j) \neq (i_{0},j_{0}), (i_{1},j_{1})})$ is algebraically independent over $k$. In particular, $k(a)$ is algebraically closed in  $k(\SL_{n}(C) \otimes k)$.

Let us now assume that the $\lambda_{i,j}$ are nonzero. We have 
$$
k(\SL_{n}(C) \otimes k)=
k((x_{i,j})_{1 \leq i,j \leq n})
=
k(a,(x_{i,j})_{1 \leq i,j \leq n, (i,j) \neq (1,1)}).
$$ 
The latter equality follows from the fact that $x_{1,1}$ belongs to the $k$-span of $a$ and $(x_{i,j})_{1 \leq i,j \leq n, (i,j) \neq (1,1)}$.
 But, we have 
 $$
 x_{2,1}= \lambda_{2,1}^{-1} a - \sum_{\substack{1 \leq i,j \leq n \\ (i,j) \neq (1,1),(2,1)}}  \lambda_{2,1}^{-1} \lambda_{i,j} x_{i,j} - \lambda_{2,1}^{-1} \lambda_{1,1} x_{1,1}. 
 $$
 The equality $\det (x_{i,j})_{1 \leq i,j \leq n}=1$ ensures that 
 $$
 x_{1,1} = -\frac{\delta_{2,1}}{\delta_{1,1}} x_{2,1} - \sum_{i=3}^{n}  \frac{\delta_{i,1}}{\delta_{1,1}} x_{i,1} + \frac{1}{\delta_{1,1}}
 $$
 where $\delta_{i,j}$ is the $(i,j)$ cofactor of the matrix $(x_{i,j})_{1 \leq i,j \leq n}$. 
 It follows that 
 \begin{multline*}
 \left(1 - \lambda_{2,1}^{-1} \lambda_{1,1}\frac{\delta_{2,1}}{\delta_{1,1}}\right)x_{2,1} = \\ \lambda_{2,1}^{-1} a - \sum_{\substack{1 \leq i,j \leq n \\ (i,j) \neq (1,1),(2,1)}}  \lambda_{2,1}^{-1} \lambda_{i,j} x_{i,j} + \lambda_{2,1}^{-1} \lambda_{1,1}\left(\sum_{i=3}^{n}  \frac{\delta_{i,1}}{\delta_{1,1}} x_{i,1} - \frac{1}{\delta_{1,1}}\right). 
\end{multline*}
 This shows that $x_{2,1}$ belongs to $k(a,(x_{i,j})_{1 \leq i,j \leq n, (i,j) \neq (1,1),(2,1)})$ and, hence, that 
 $$
k(\SL_{n}(C) \otimes k)=
k(a,(x_{i,j})_{1 \leq i,j \leq n, (i,j) \neq (1,1),(1,2)}).
$$ 
 We can now conclude that $k(a)$ is algebraically closed in  $k(\SL_{n}(C) \otimes k)$ by using a transcendence degree argument as we did above. \end{proofsln}

\section{A special case of reductive $G_L$: $\GL_n(C)$} \label{sec:specialcase}

\begin{lem}\label{lem isogeny}
 Let $\varphi : G_{1} \rightarrow G_{2}$ be an isogeny with $G_{1}$ connected. Then: 
 \begin{itemize}
 \item $\varphi^{-1}(Z_{G_{2}})=Z_{G_{1}}$, $\varphi(Z_{G_{1}})=Z_{G_{2}}$ and $\varphi(Z_{G_{1}}^{0})=Z_{G_{2}}^{0}$; 
   \item if $G_{2}=Z_{G_{2}}^{0}G_{2}'$, then $G_{1}=Z_{G_{1}}^{0}G_{1}'$\footnote{Recall that $G'$ is the derived subgroup of the algebraic group $G$.}; 
  \item if $Z_{G_{2}}^{0}$ is a torus of dimension $d$, then $Z_{G_{1}}^{0}$ is a torus of dimension $d$ as well; 
\item if $G_{2}'$ is semi-simple, then $G_{1}'$ is semi-simple as well;
\item if $G_{2}$ is reductive, then $G_{1}$ is reductive as well. 
\end{itemize}
 \end{lem}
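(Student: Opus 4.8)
The plan is to treat the five assertions in sequence, using the first (the statement about centers) as the engine that drives all the others. Throughout I will use that an isogeny $\varphi:G_1\to G_2$ is a surjective homomorphism with finite kernel $N=\ker\varphi$, so that $N$ is a finite normal subgroup of the connected group $G_1$; hence $N$ is central, $N\subset Z_{G_1}$ (a finite normal subgroup of a connected group lies in the center — this is standard, e.g. the conjugation action of the connected $G_1$ on the finite set $N$ is trivial). First I would prove $\varphi^{-1}(Z_{G_2})=Z_{G_1}$: the inclusion $\varphi(Z_{G_1})\subseteq Z_{G_2}$ is immediate by surjectivity of $\varphi$, and conversely if $\varphi(g)\in Z_{G_2}$ then for every $h\in G_1$ the commutator $[g,h]$ lies in $N$, so the regular map $G_1\to N$, $h\mapsto[g,h]$, is constant (as $G_1$ is connected and $N$ finite) equal to $[g,e]=e$, giving $g\in Z_{G_1}$. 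This yields $\varphi^{-1}(Z_{G_2})=Z_{G_1}$, and applying $\varphi$ gives $\varphi(Z_{G_1})=Z_{G_2}$ (surjectivity again, since $\varphi^{-1}(Z_{G_2})=Z_{G_1}$ forces $Z_{G_2}\subseteq\varphi(Z_{G_1})$). The restriction $\varphi|_{Z_{G_1}}:Z_{G_1}\to Z_{G_2}$ is then itself an isogeny (finite kernel $N$, surjective), so it maps the identity component onto the identity component: $\varphi(Z_{G_1}^0)=Z_{G_2}^0$. This last point uses that a surjective homomorphism of algebraic groups sends $Z_{G_1}^0$ onto a connected finite-index subgroup of $Z_{G_2}$, necessarily $Z_{G_2}^0$.

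Next, for the second bullet, suppose $G_2=Z_{G_2}^0\,G_2'$. I would argue that $\varphi(Z_{G_1}^0\,G_1')=\varphi(Z_{G_1}^0)\varphi(G_1')=Z_{G_2}^0\,G_2'=G_2$ (using $\varphi(G_1')=G_2'$, which holds for any surjective homomorphism, and $\varphi(Z_{G_1}^0)=Z_{G_2}^0$ from the first part). Hence $\varphi^{-1}(G_2)=G_1=Z_{G_1}^0\,G_1'\cdot N$; but $N\subset Z_{G_1}$, and I need $N\subset Z_{G_1}^0\,G_1'$. Here I would observe that $Z_{G_1}^0\,G_1'$ is a closed subgroup of finite index in $G_1$ (its image under $\varphi$ is all of $G_2$ and its kernel-part is finite), and $G_1$ is connected, so a closed finite-index subgroup must be all of $G_1$; thus $G_1=Z_{G_1}^0\,G_1'$ directly. (Alternatively: $Z_{G_1}^0 G_1'$ is connected, surjects onto $G_2$, so has finite index in $G_1$, hence equals $G_1$ by connectedness.)

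For the third bullet, if $Z_{G_2}^0$ is a torus of dimension $d$, then since $\varphi|_{Z_{G_1}^0}:Z_{G_1}^0\to Z_{G_2}^0$ is an isogeny with $Z_{G_1}^0$ connected and target a torus, Lemma \ref{lem:torus isogeny} applies verbatim to give that $Z_{G_1}^0$ is a torus of the same dimension $d$. For the fourth bullet, if $G_2'$ is semisimple: $\varphi|_{G_1'}:G_1'\to G_2'$ is an isogeny (surjective with kernel $N\cap G_1'$ finite) and $G_1'$ is connected; a connected group isogenous to a semisimple group is semisimple — concretely, the radical $R(G_1')$ maps onto $R(G_2')=\{e\}$ so $R(G_1')\subseteq N$ is finite, and being connected it is trivial, so $G_1'$ is semisimple. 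Finally, the fifth bullet follows by combining the previous ones with the standard structure fact that a connected group $G$ is reductive if and only if $G=Z_G^0\,G'$ with $Z_G^0$ a torus and $G'$ semisimple (equivalently $R(G)=Z_G^0$ is a torus): given $G_2$ reductive, write $G_2=Z_{G_2}^0 G_2'$ with $Z_{G_2}^0$ a torus and $G_2'$ semisimple, then bullets two, three, four give that $G_1=Z_{G_1}^0 G_1'$ with $Z_{G_1}^0$ a torus and $G_1'$ semisimple, whence $G_1$ is reductive.

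The main obstacle I anticipate is not any single deep step but rather being careful and uniform about the small group-theoretic facts that get invoked repeatedly: that a finite normal subgroup of a connected group is central; that a closed finite-index subgroup of a connected group is the whole group; and that surjective homomorphisms respect the operations $G\mapsto G'$, $G\mapsto Z_G$, $G\mapsto (-)^0$ under the connectedness hypotheses present here. None is hard, but the cleanest write-up isolates "$N\subset Z_{G_1}$ and $G_1$ connected" once at the start and reuses it; the risk is circular or redundant reasoning if these are not stated up front. A secondary subtlety is making sure the equivalence "reductive $\iff$ $Z_G^0$ is a torus and $G=Z_G^0G'$" is quoted in the form actually needed, since some references phrase reductivity via triviality of the unipotent radical; I would cite \cite{Humphreys} for whichever formulation is used.
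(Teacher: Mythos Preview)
Your proposal is correct and follows essentially the same approach as the paper: the commutator-map argument for $\varphi^{-1}(Z_{G_2})=Z_{G_1}$, the finite-index-in-connected argument for $G_1=Z_{G_1}^0G_1'$, the appeal to Lemma~\ref{lem:torus isogeny} for the torus statement, and the transfer of semisimplicity along the induced isogeny $G_1'\to G_2'$ all match the paper's proof. Your write-up is in places slightly more explicit (isolating $N\subset Z_{G_1}$ at the outset, spelling out the radical argument for semisimplicity), but there is no genuine difference in strategy.
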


\begin{pproof}
We first prove that $ \varphi^{-1}(Z_{G_{2}}) \subset Z_{G_{1}}$. Consider $g \in \varphi^{-1}(Z_{G_{2}})$. For any $h \in G_{1}$, $\varphi([g,h])=[\varphi(g),\varphi(h)]=e_{G_{2}}$, so $[g,h] \in \ker \varphi$. Since $G_{1}$ is connected, the image the morphism of algebraic varieties 
\begin{eqnarray*}
 G_{1} &\rightarrow& \ker \varphi\\
h &\mapsto & [g,h]
\end{eqnarray*}
is contained in $(\ker \varphi)^{0}=\{e_{G_{1}}\}$, so $g \in Z_{G_{1}}$ as expected. 

The inclusion $Z_{G_{1}} \subset \varphi^{-1}(Z_{G_{2}})$ follows from the fact that $\varphi$ is onto and therefore $\varphi^{-1}(Z_{G_{2}}) =Z_{G_{1}}$. 

The equality $\varphi(Z_{G_{1}})=Z_{G_{2}}$ follows form this and from the fact that $\varphi$ is onto. 

It follows that $\varphi(Z_{G_{1}}^{0})=\varphi(Z_{G_{1}})^{0}=Z_{G_{2}}^{0}$.

Since $\varphi$ is onto, we have $\varphi(G_{1}')= G_{2}'$. We have seen that $\varphi(Z_{G_{1}}^{0})=Z_{G_{2}}^{0}$. Therefore, $\varphi(Z_{G_{1}}^{0}G_{1}')=Z_{G_{2}}^{0} G_{2}'=G_{2}$. Since $\ker \varphi$ is central, we get $G_{1} \subset (\ker \varphi)Z_{G_{1}}^{0}G_{1}' \subset Z_{G_{1}}G_{1}'$, so $G_{1} = Z_{G_{1}}G_{1}'$.  But $Z_{G_{1}}^{0}G_{1}'$ is a closed subgroup of $G_{1}=Z_{G_{1}}G_{1}'$ of finite index, so $G_{1}^{0}=G_{1} \subset Z_{G_{1}}^{0}G_{1}'$, whence the equality $G_{1} = Z_{G_{1}}^{0}G_{1}'$.

Assume that $Z_{G_{2}}^{0}$ is a torus of dimension $d$. We have seen that $\varphi(Z_{G_{1}}^{0})=Z_{G_{2}}^{0}$, so $\varphi$ induces an isogeny between $Z_{G_{1}}^{0}$ and $Z_{G_{2}}^{0}$. Lemma \ref{lem:torus isogeny} implies that $Z_{G_{1}}^{0}$ is a torus of dimension $d$. 

Since $\varphi(G_{1}')= G_{2}'$, $\varphi$ induces an isogeny between $G_{1}'$ and $G_{2}'$. Therefore, $G_{1}'$ is semi-simple if and only if $G_{2}'$ is semi-simple.  

The last assertion follows from the previous one. 
\end{pproof}

{In the proof of the next result we will use the tannakian correspondence. Appendix ~\ref{tannaka} contains a review of this tool.}

\begin{prop} \label{prop: case GL}
Let $f \in K^{\times}$ satisfy a nonzero homogeneous linear differential equation over $k$ of order $n$ and assume that the differential Galois group over $k$ of the latter equation is $\GL_{n}(C)$.  Let $g \in K$ satisfy a nonzero homogeneous linear differential equation over $k$. Assume that $f$ and $g$ are algebraically dependent over $k$. We have:
\begin{enumerate}
\item if $n=1$, then there exist $\theta$ and a relative integer $n$ such that 
$$
f=\theta^{n}
\text{ and }
g \in k[\theta,\theta^{-1}];
$$ 
\item if $n>1$, then 
$$
g \in k[f].
$$

\end{enumerate}
\end{prop}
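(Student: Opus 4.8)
The plan is to dispatch the first item immediately and to treat the second by the machinery already used in the proof of Proposition~\ref{prop sln}, with the extra one‑dimensional torus handled by hand. For item~1, $L$ has order $1$, so $f'=af$ for some $a\in k$, and since $G_L=\GL_1(C)=C^{\times}$ is infinite, $f\notin k$; the conclusion is then exactly Theorem~\ref{prop:torus}.

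For item~2 ($n>1$) I would first reproduce the set‑up of the proof of Proposition~\ref{prop sln}: pick a minimal‑order operator $M$ over $k$ annihilating $g$, let $E\supseteq k$ be a Picard--Vessiot extension of the system associated with $L\oplus M$ containing $f$ and $g$, with differential Galois group $G$ and restriction morphisms $\pi_L:G\twoheadrightarrow G_L=\GL_n(C)$ and $\pi_M:G\twoheadrightarrow G_M$. The algebraic dependence of $f$ and $g$ (with $f$ transcendental over $k$) makes $E$ a finite extension of the Picard--Vessiot field $E_L=k(W(f_{1},\ldots,f_{n}))$ of $L$, so $H:=\ker\pi_L$ is finite; as $k$ is algebraically closed, $G$ is connected, hence $H$ is central and $\pi_L$ is an isogeny. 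Writing $\GL_n(C)=Z^{\circ}\cdot\SL_n(C)$ with $Z^{\circ}\cong C^{\times}$ a one‑dimensional torus and $(\GL_n)'=\SL_n(C)$ semisimple (in fact simply connected), Lemma~\ref{lem isogeny} gives that $G$ is reductive, that $G=Z_G^{\circ}\,G'$ with $Z_G^{\circ}$ a one‑dimensional torus, and that $\pi_L$ restricts to an isogeny $G'\to\SL_n(C)$; since $G'$ is connected and $\SL_n(C)$ is simply connected, this restriction is an isomorphism $G'\xrightarrow{\ \sim\ }\SL_n(C)$.

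The heart of the matter is to show that $g$ lies in the Picard--Vessiot ring $R_L=k[\mathfrak{Y}_L,\det(\mathfrak{Y}_L)^{-1}]$ of $L$ --- equivalently, via the tannakian correspondence, that the differential module of $M$ lies in the tannakian category generated by that of $L$, i.e. that $H=\ker\pi_L$ acts trivially on the solution space $V_M$ of $M$. This is where the algebraic dependence enters essentially: each $\sigma\in H$ fixes $f$ (it acts trivially on the solutions of $L$), so it must send $g$ to a $k(f)$‑conjugate of $g$, of which there are only finitely many; moreover $H$ is finite, central in $G$, and injects into $G/G'\cong C^{\times}$, so it acts by a single scalar on each $G$‑isotypic component of $V_M$, and since $V_M$ is generated as a $G$‑module by $g$, showing $H$ fixes $g$ reduces to checking that these scalars are all trivial. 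Pinning down these characters is the step I expect to be the main obstacle (and, per the authors' remark, the one whose idea carries over to the general reductive case). Granting $g\in R_L\subseteq E_L$, I would then observe that $k(f)$ is relatively algebraically closed in $E_L$: $E_L$ is the function field of $\GL_n\otimes k$, which is purely transcendental over $k$ in the coordinates $x_{i,j}$, and under the isomorphism $R_L\cong k[\GL_n\otimes k]$ recalled in the proof of Lemma~\ref{lem sln} the element $f$ corresponds to a nonzero $k$‑linear form in the first‑column coordinates $x_{1,1},\ldots,x_{n,1}$, hence is part of a transcendence basis of $E_L$ over $k$. Since $g$ is algebraic over $k(f)$, it follows that $g\in k(f)$.

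Finally, from $g\in k(f)$, Corollary~\ref{cor f dans k de g} (with the roles of $f$ and $g$ exchanged) yields either $g\in k[f]$, which is the desired conclusion, or the existence of $\theta$ with $\theta'/\theta\in k$ such that $f=c+d\theta$ and $g\in k[\theta,\theta^{-1}]$ for some $c,d\in k$. The second alternative must be excluded: $f-c=d\theta$ satisfies a first‑order linear differential equation over $k$, so $f$ is annihilated by a monic operator over $k$ of order at most $2$ which is reducible if it has order $2$; but $G_L=\GL_n(C)$ acts irreducibly on $C^{n}$, so $L$ is the minimal operator of $f$, which forces $n\le 2$, and then for $n=2$ the reducibility of this operator contradicts the irreducibility of $\GL_2(C)$ on $C^{2}$ (while for the order‑$1$ possibility it contradicts $n\ge 2$). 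Hence $g\in k[f]$, completing the plan.
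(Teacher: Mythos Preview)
Your plan for item~1 and the overall architecture for item~2 (set up the joint Picard--Vessiot extension, use the isogeny $\pi_L$ together with Lemma~\ref{lem isogeny} to analyse $G$, then finish via Corollary~\ref{cor f dans k de g}) match the paper, and your exclusion of the $\theta$-alternative at the end is essentially the paper's ``$\GL_n(C)$ acts irreducibly'' argument.

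The genuine gap is exactly where you flag it. You aim to show that $H=\ker\pi_L$ acts trivially on $V_M$, so that $g\in R_L=k[\mathfrak{Y}_L,\det(\mathfrak{Y}_L)^{-1}]$. Your sketch (``$H$ is finite, central, injects into $G/G'\cong C^\times$, hence acts by a single scalar on each isotypic piece; now check these scalars are trivial'') supplies no mechanism for that last check, and the paper does not attempt it either. Instead of forcing $g\in R_L$, the paper performs an explicit tannakian reduction: it splits off the trace via $\widetilde{A}=A-\tfrac{\tr A}{n}I_n$, enlarges the system by the rank-one equations $y'=\tfrac{\tr A}{nN}\,y$ for a well-chosen $N$, and tracks how the one-dimensional torus $Z_G^0$ acts through characters. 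The conclusion is only that
\[
g\in k\bigl[\mathfrak{Y}_L,\ \sqrt[q]{\det\mathfrak{Y}_L}^{\,-1}\bigr]
\]
for some positive integer $q$, i.e.\ $g$ lies in a specific finite extension of $E_L$, not necessarily in $E_L$ itself.

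Consequently, your relative-algebraic-closure step is also too weak as stated. That $E_L=k(\GL_n\otimes k)$ is purely transcendental in the $x_{i,j}$ shows $k(f)$ is relatively algebraically closed in $E_L$, but this is not enough once $g$ is only known to lie in $k\bigl((X_{i,j}),\sqrt[q]{\det}^{\,-1}\bigr)$. The paper proves and invokes the stronger Lemma~\ref{lem GL sqrt det}: for any nonzero $k$-linear form $a$ in the $X_{i,j}$, the field $k(a)$ is relatively algebraically closed in $k\bigl((X_{i,j}),\sqrt[q]{\det}^{\,-1}\bigr)$. This is what yields $g\in k(f)$; after that, Corollary~\ref{cor f dans k de g} finishes the proof as in your outline.
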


\begin{pproof} 
Let $L$ be a nonzero linear differential operator of order $n$ with coefficients in $k$ and with differential Galois group over $k$ equal to $\GL_{n}(C)$ annihilating $f$. Let $Y'=AY$ be the corresponding linear differential system.

Let $M$ be a nonzero linear differential operator with coefficients in $k$ annihilating $g$ and of minimal order $m$ for this property. Let $Y'=BY$ be the corresponding linear differential system.

We let $E$ be a Picard-Vessiot extension over $k$ of the differential system  
\begin{equation}\label{diff syst direct sum quat}
 Y'=
\begin{pmatrix}
 A&0\\
 0&B
\end{pmatrix}
Y
\end{equation}
containing $f$ and $g$. Let $G$ be the corresponding differential Galois group over $k$. The differential system \eqref{diff syst direct sum quat} has a fundamental solution of the form  
$$
\mathfrak{Y} 
=
\begin{pmatrix}
 \mathfrak{Y}_{A} & 0\\
 0 & \mathfrak{Y}_{B} 
\end{pmatrix} 
\in \GL_{m+n}(E)
$$
where 
$$
\mathfrak{Y}_{A} =
\begin{pmatrix}
 f_{1}&\cdots &f_{n}\\
 f_{1}'&\cdots &f_{n}'\\
  \vdots&\ddots &\vdots\\
  f_{1}^{(n-1)}&\cdots &f_{n}^{(n-1)}
\end{pmatrix} 
\in \GL_{n}(E)
$$
and 
$$
\mathfrak{Y}_{B} =
\begin{pmatrix}
 g_{1}&\cdots &g_{m}\\
 g_{1}'&\cdots &g_{m}'\\
  \vdots&\ddots &\vdots\\
  g_{1}^{(m-1)}&\cdots &g_{m}^{(m-1)}
\end{pmatrix} 
\in \GL_{m}(E)
$$
with $f_{1}=f$ and $g_{1}=g$ (thus, $E$ is the field extension of $k$ generated by the entries of  $\mathfrak{Y}_{A}$ and $\mathfrak{Y}_{B}$). 
The field extension $E_{A}$ (resp. $E_{B}$) of $k$ generated by the $f_{i}^{(j)}$ (resp. by the $g_{i}^{(j)}$) is a  Picard-Vessiot extension over $k$ for $Y'=AY$ (resp. $Y'=BY$); we let $G_{A}$ (resp. $G_{B}$) be the corresponding differential Galois group over $k$. 

By minimality of $M$, we have 
$$
\operatorname{span}_{C} G g
=
\operatorname{span}_{C} G_{B} g
=
\operatorname{span}_{C}\{g_{1},\ldots,g_{m}\}
$$ 
(note that $Gg=G_{B}g$ because the restriction morphism $G \rightarrow G_{B}$ is surjective). 
Moreover, $E_{A}$ is left invariant by $G$. These facts, together with the fact that $g$ is algebraic over $E_{A}$ (because $f$ and $g$ are algebraically dependent over $k$ and $f$ is transcendental over $k$ because {$G_A$ is $\GL_n(C)$})
%the Galois group of $G$ is Lie-irreducible), 
imply that the $g^{(j)}_{i}$ are algebraic over $E_{A}$ as well, {\it i.e.}, $E$ is a finite extension of $E_{A}$. 

It is a general fact that the restriction morphism  
$
\pi_{A} : G \rightarrow G_{A}
$
is surjective and hence induces a short exact sequence 
$$
0 \rightarrow H \rightarrow G \xrightarrow[]{\pi_{A}} G_{A} \rightarrow 0. 
$$
The kernel $H$ (which is the group of differential field automorphisms of $E$ over $E_{A}$) is finite because $E$ is a finite extension of $E_{A}$.  
Since $k$ is algebraically closed, $G$ is connected. Applying Lemma \ref{lem isogeny} to $\pi_{A}:G \rightarrow G_{A}$ (with $G_{A}=\GL_{n}(C)=Z_{G_{A}}^{0}G_{A}'$ with $Z_{G_{A}}^{0}=C^{\times}I_{n}$ and $G_{A}'=\SL_{n}(C)$), we get 
$$
G=Z_{G} G'=Z_{G}^{0} G'
$$ 
and also that $Z_{G}^{0}$ is a one dimensional torus. 
There exists $P\in \operatorname{diag}(I_{n},\GL_{m}(C)) \subset \GL_{n+m}(C)$ such that 
$$
P Z_{G}^{0} P^{-1}= 
\{\operatorname{diag}(t^{\alpha_{0}}I_{n},t^{\alpha_{1}}I_{m_{1}},\ldots,t^{\alpha_{\ell}}I_{m_{\ell}}) \ \vert \ t \in C^{\times}\}
$$
for some $m_{1},\ldots,m_{\ell} \in \mathbb{Z}_{\geq 1}$ and some $\alpha_{0},\ldots,\alpha_{\ell} \in \mathbb{Z}$ with $\alpha_{1},\ldots,\alpha_{\ell}$ two by two distinct.  Therefore, $PGP^{-1}$ is diagonal by block: 
$$
PGP^{-1} \subset 
\operatorname{diag}(\GL_{n}(C),\GL_{m_{1}}(C),\ldots,\GL_{m_{\ell}}(C)).
$$
It follows that there exist $B_{1} \in M_{m_{1}}(k),\ldots,B_{\ell}  \in M_{m_{\ell}}(k)$ such that $Y'=BY$ is equivalent over $k$ to 
$$
Y'=
 \text{diag}(B_{1},\ldots,B_{\ell})
Y.
$$
So, up to changing $\mathfrak{Y}_{B}$ by $R\mathfrak{Y}_{B}$ for some $R \in \GL_{m}(k)$, we can and will assume that $B= \text{diag}(B_{1},\ldots,B_{\ell})$ and that $P=I_{n}$.

Let $\widetilde{A}=A-\operatorname{tr}(A)/nI_n$ and consider the differential system 
\begin{align} \label{eq:split}
 Y'=&
 \text{diag}(\operatorname{tr}(A)/nI_n,\widetilde{A},B_{1},\ldots,B_{\ell})
Y. 
\end{align}
Since $\operatorname{tr}(\widetilde{A}) = 0$, the differential Galois group of $\widetilde{A}$ is unimodular (\cite[Exercise 1.35.5]{VdPS}).
We let $H$ be the differential Galois group of \eqref{eq:split}. In the tannakian correspondance (cf., Example~\ref{ex:appendix}), 
the equation 
$$
 Y'=
 \text{diag}(A,B_{1},\ldots,B_{\ell})
Y
$$
 corresponds to the representation $\rho: H \rightarrow G$ given by 
$$
\rho:  \text{diag}(t,\widetilde{h},h_{1},\ldots,h_{l}) \mapsto  \text{diag}(t \widetilde{h},h_{1},\ldots,h_{\ell}). 
$$
Applying Lemma \ref{lem isogeny} to $\rho: H \rightarrow G$ ({again the differential Galois theory implies  that $\rho$ is onto, and  that it has finite kernel}), we have 
$$
H=Z_{H}^{0} H'
$$
and $Z_{H}^{0}=\rho^{-1}(Z_{G}^{0})^{0} $ is a one dimensional torus. 
The restriction  
$$
\rho_{\vert Z_{H}^{0}}: Z_{H}^{0}=\rho^{-1}(Z_{G}^{0})^{0} \rightarrow Z_{G}^{0}
$$
is given by 
$$
h \mapsto (\chi_{0}(h)I_{n},\chi_{1}(h)I_{m_{1}},\ldots,\chi_{\ell}(h)I_{m_{\ell}})
$$
for some characters $\chi_{i}: Z_{H}^{0} \rightarrow C^{\times}$. Another morphism 
$
Z_{H}^{0} \rightarrow Z_{G}^{0}
$
is given by
$$
h=\operatorname{diag}(t,\widetilde{h},h_{1},\ldots,h_{\ell}) \mapsto \operatorname{diag}(t^{\alpha_{0}}I_{n},t^{\alpha_{1}}I_{m_{1}},\ldots,t^{\alpha_{\ell}}I_{m_{\ell}}).
$$
Therefore (since $Z_{K}^{0}$ and $Z_{G}^{0}$ are one dimensional tori: see Lemma~\ref{lem : isogenies tori}), there exist nonzero relatively prime integers $N,M$ such that, for all $i$,
$$
\chi_{i}(h)^{N}=t^{\alpha_{i}M}.
$$

Consider the differential system  
$$
 Y'=
 \text{diag}(\operatorname{tr}(A)/nN,\widetilde{A},B_{1},\ldots,B_{\ell}) 
Y.
$$
Let $H_{N}$ be the differential Galois group of this system. In the tannakian correspondance, 
the equation 
$$
 Y'=
 \text{diag}(A,B_{1},\ldots,B_{\ell})
Y
$$
 corresponds to 
 $$
 \varrho := \rho \circ p:H_{N} \rightarrow G
 $$ 
 where $p: H_{N} \rightarrow H$ (which corresponds to $Y'=
 \text{diag}(\operatorname{tr}(A)/n,\widetilde{A},B_{1},\ldots,B_{\ell})
Y$ in the tannakian correspondence) is given by
$$
p: \operatorname{diag}(z,\widetilde{h},h_{1},\ldots,h_{l}) \mapsto \operatorname{diag}(z^{N},\widetilde{h},h_{1},\ldots,h_{\ell}). 
$$
Applying Lemma \ref{lem isogeny} to $p: H_{N} \rightarrow H$, we get 
$$
H_{N}=Z_{H_{N}}^{0} H_{N}'
$$
and $Z_{H_{N}}^{0}=p^{-1}(Z_{H}^{0})^{0}$ is a one dimensional torus. 

If $h=\operatorname{diag}(z,\widetilde{h},h_{1},\ldots,h_{\ell}) \in Z_{H_{N}}^{0}=p^{-1}(Z_{H}^{0})^{0}$, then 
$$
\chi_{i}(p(h))^{N}=z^{\alpha_{i}NM}
$$
so 
$$
\chi_{i}(p(h))=z^{\alpha_{i}M}.
$$
So, we have 
$$
\varrho_{\vert Z_{H_{N}}^{0}}: (z,\widetilde{h},h_{1},\ldots,h_{\ell}) \mapsto (z^{N}I_{n}=z^{\alpha_{0}M}I_{n},z^{\alpha_{1}M}I_{m_{1}},\ldots,z^{\alpha_{\ell}M}I_{m_{\ell}}). 
$$

On the other hand, the representation corresponding to $Y'=AY$ in the tannakian correspondence is   
$$
\varpi_{A} = \pi_{A} \circ \varrho: H_{N} \rightarrow G_{A}.
$$
It is surjective ({differential Galois theory}) and has finite kernel (because $\varrho$ and $\pi_{A}$ have finite kernels), so it is an isogeny. Therefore, it induces an isogeny 
$$
\varpi_{A\vert H_{N}'} = \pi_{A} \circ \varrho: H_{N}'\rightarrow G_{A}'.
$$
Since $G_{A}'=\SL_{n}(C)$ is a connected and simply connected algebraic group, 
$$
\varpi_{A\vert H_{N}'} = \pi_{A} \circ \varrho: H_{N}'\rightarrow G_{A}'.
$$
is actually an isomorphism. Since $G=\operatorname{im}(\pi_{A} \circ \varrho \oplus \pi_{B} \circ \varrho)$, it follows that there exists an algebraic group morphism $u=(u_{1},\ldots,u_{\ell}): G_{A}' \rightarrow G_{B}$ such that 
$$
\pi_{B} \circ \varrho= u \circ (\pi_{A} \circ \varrho).
$$
So, 
$$
\varrho_{\vert H_{N}'}: 
 (1,\widetilde{h},h_{1},\ldots,h_{\ell}) \mapsto (\widetilde{h},u_{1}(\widetilde{h}),\ldots,u_{\ell}(\widetilde{h})).
$$

Finally, we see that the representation $\varpi_{B} =\pi_{B} \circ \varrho$ corresponding to $Y'=BY$ in the tannakian correspondence is   
given by 
$$
\varpi_{B} =\pi_{B} \circ \varrho: (z,\widetilde{h},h_{1},\ldots,h_{l}) \mapsto (z^{\alpha_{1}M}u_{1}(\widetilde{h}),\ldots,z^{\alpha_{\ell}M}u_{\ell}(\widetilde{h})).
$$

This ensures, by the tannakian correspondance, that $Y'=B_{i}Y$ is equivalent to a system of the form $Y'=(\operatorname{Constr}_{i}(\widetilde{A})+\alpha_{i}M(\operatorname{tr}(A)/nN)I_{m_{i}})Y$ where $\operatorname{Constr}_{i}(\widetilde{A})$ is obtained from $\widetilde{A}$ by some construction of linear algebra. It follows that the entires of the solutions of $Y'=BY$ belong to  $k[\mathfrak{Y}_{L},\sqrt[q]{\det(\mathfrak{Y}_{L})}^{-1}]$  for some positive integer $q$; in particular, $g$ belongs to $k[\mathfrak{Y}_{L},\sqrt[q]{\det(\mathfrak{Y}_{L})}^{-1}]$. 

For $n=1$, this gives the desired result {(which, of course, already follows from Corollary~\ref{cor:f eg Pg})}. 

Let us now assume $n>1$. We have that $R_{L}=k[\mathfrak{Y}_{L},{\det(\mathfrak{Y}_{L})}^{-1}]$ is isomorphic, as a $k$-algebra, to $k[\GL_{n}(k)]$. More precisely there exists $v \in \GL_{n}(k)$ such that the $k$-algebra morphism 
$$
k[(X_{i,j})_{1 \leq i,j \leq n},\det((X_{i,j})_{1 \leq i,j \leq n})^{-1}] \rightarrow R_{L}
$$
defined by $ X \mapsto v\mathfrak{Y}_{L}$   
is an isomorphism; it induces an isomorphism 
$$
k[(X_{i,j})_{1 \leq i,j \leq n},\sqrt[q]{\det((X_{i,j})_{1 \leq i,j \leq n})}^{-1}] \rightarrow k[\mathfrak{Y}_{L},\sqrt[q]{\det(\mathfrak{Y}_{L})}^{-1}]
$$
In this isomorphism, $f$ corresponds to an element $a$ of $\operatorname{span}_{k} \{X_{i,1} \ \vert \ 1 \leq i \leq n \} \mod I$. The fact that $k(a)$ is relatively algebraically closed in  $k((X_{i,j})_{1 \leq i,j \leq n},\sqrt[q]{\det((X_{i,j})_{1 \leq i,j \leq n})}^{-1})$ (see Lemma \ref{lem GL sqrt det} {below}) implies that 
$$
g \in k(f).
$$
Corollary \ref{cor f dans k de g} ensures that:
\begin{itemize}
\item either  $g \in k[f]$,
\item or there exists $\theta$ such that $\theta'/\theta \in k$ and 
$$
f= c+d \theta \text{ and } g \in k[\theta,\theta^{-1}]
$$ 
for some $c,d \in k$. 
\end{itemize}
The latter possibility is incompatible with the fact that $G_{L}=\GL_{n}(C)$ acts irreducibly on the solution space of $L$. 
\end{pproof}

\begin{lem}\label{lem GL sqrt det}
 For any $a \in \operatorname{span}_{k} \{X_{i,j} \ \vert \ 1 \leq i,j \leq n \}$, $k(a)$ is relatively algebraically closed in  
 $$
 k((X_{i,j})_{1 \leq i,j \leq n},\Delta^{-1}) \text{ where } \Delta = \sqrt[q]{\det((X_{i,j})_{1 \leq i,j \leq n})}.
 $$
\end{lem}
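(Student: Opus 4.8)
The plan is to exhibit a transcendence basis of the field $F:=k((X_{i,j})_{1\leq i,j\leq n},\Delta^{-1})$ over $k$ that contains both $a$ and $\Delta$; then $F$ is a purely transcendental extension of $k(a)$, and hence (as in the discussion following Proposition~\ref{prop sln}) $k(a)$ is relatively algebraically closed in $F$. We may assume $a=\sum_{i,j}\lambda_{i,j}X_{i,j}\neq 0$, since otherwise $k(a)=k$ and there is nothing to prove; then $a$, being a nonzero $k$-linear combination of the algebraically independent $X_{i,j}$, is transcendental over $k$. We also assume $n\geq 2$, which is the only case we shall need. Writing $D:=\det((X_{i,j})_{1\leq i,j\leq n})$, we have $\Delta^q=D$ and $F=k((X_{i,j}),\Delta)$; since $D\neq 0$, $F$ is algebraic over $k((X_{i,j}))$, so $\operatorname{tr.deg}(F/k)=n^2$.

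The only structural fact about $D$ that I would use is that it has degree $\leq 1$ in each variable, the coefficient of $X_{i,j}$ being $(-1)^{i+j}C_{i,j}$, where $C_{i,j}$ denotes the $(i,j)$-cofactor — a polynomial involving only the variables $X_{l,m}$ with $l\neq i$ and $m\neq j$. First I would choose a pair $(p,q)$ with $\lambda_{p,q}\neq 0$ and, using $n\geq 2$, an index $s\neq q$; put $S:=\{X_{i,j}:(i,j)\notin\{(p,q),(p,s)\}\}$, a set of $n^2-2$ variables. Because $(p,q)$ and $(p,s)$ lie in a common row, no monomial of $D$ is divisible by $X_{p,q}X_{p,s}$, and so $D=\beta X_{p,q}+\gamma X_{p,s}+\delta$ with $\beta=(-1)^{p+q}C_{p,q}$, $\gamma=(-1)^{p+s}C_{p,s}$ and $\delta=D|_{X_{p,q}=X_{p,s}=0}$, all lying in $k[S]$.

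Next I would solve $a=\lambda_{p,q}X_{p,q}+\lambda_{p,s}X_{p,s}+\sum_{X_{i,j}\in S}\lambda_{i,j}X_{i,j}$ for $X_{p,q}$ and substitute into $\Delta^q=D$, obtaining an identity $\Delta^q=\nu+\mu X_{p,s}$ with $\nu\in k[a,S]$ and $\mu=\gamma-\lambda_{p,q}^{-1}\lambda_{p,s}\beta\in k[S]$. The essential point — and the only one requiring real care — is that $\mu\neq 0$. If $\lambda_{p,s}=0$ this is immediate since $\mu=\pm C_{p,s}\neq 0$. If $\lambda_{p,s}\neq 0$, observe that, as $s\neq q$, the cofactor $C_{p,q}$ genuinely involves the column-$s$ variables $X_{l,s}$ ($l\neq p$), whereas $C_{p,s}$ involves none of them; hence the two summands of $\mu$ cannot cancel. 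Thus $\mu$ is a unit of $k(S)$, so $X_{p,s}=\mu^{-1}(\Delta^q-\nu)\in k(a,S,\Delta)$, and then $X_{p,q}\in k(a,S,\Delta)$ as well. Consequently $F=k(a,S,\Delta)=k(a)(S\cup\{\Delta\})$, and since $S\cup\{\Delta\}$ consists of $n^2-1=\operatorname{tr.deg}(F/k(a))$ elements which generate $F$ over $k(a)$, it is a transcendence basis of $F$ over $k(a)$. Therefore $F$ is purely transcendental over $k(a)$, as wanted.

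The main obstacle is exactly the non-vanishing of $\mu$, and it is the reason for choosing the two eliminated entries in a common row: a careless elimination of two matrix entries from the two relations $a=\cdots$ and $\Delta^q=\det$ would in general leave a \emph{quadratic} relation for the second entry — coming from the ``double cofactor'' monomial $X_{p,q}X_{p,s}$ of $\det$ — placing that entry only in a degree-two extension of $k(a,S,\Delta)$ and ruining the pure-transcendence conclusion. Taking $(p,q)$ and $(p,s)$ in one row annihilates that monomial, after which one only has to rule out an accidental cancellation in the surviving linear coefficient $\mu$, which is the cofactor bookkeeping above.
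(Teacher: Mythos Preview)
Your proof is correct and follows essentially the same strategy as the paper's: exhibit $n^{2}$ generators of $F$ over $k$ containing $a$ and $\Delta$, so that by a transcendence-degree count $F$ is purely transcendental over $k(a)$. Your organization is a bit cleaner than the paper's --- you handle the two cases (some $\lambda_{i,j}=0$ vs.\ all nonzero) uniformly by always eliminating two entries in a common row, and you make the non-vanishing of the key coefficient $\mu$ explicit, whereas the paper splits into cases and leaves the analogous non-vanishing (of $1-\lambda_{2,1}^{-1}\lambda_{1,1}\delta_{2,1}/\delta_{1,1}$) implicit; your observation that the lemma needs $n\geq 2$ is also correct and matches how it is used.
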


\begin{pproof}
Let  
$$
L =
k((X_{i,j})_{1 \leq i,j \leq n},\Delta).
$$ 
and let $a=\sum \lambda_{i,j} X_{i,j}$ be in the $k$-span of the $X_{i,j}$. 

Let us first assume at least one of the $\lambda_{i,j}$ is zero, say $\lambda_{i_{0},j_{0}} = 0$. Then, we have 
\begin{multline*}
L=
k((X_{i,j})_{1 \leq i,j \leq n},\Delta)
=
k((X_{i,j})_{1 \leq i,j \leq n, (i,j) \neq (i_{0},j_{0})},\Delta) \\
=
k(a,(X_{i,j})_{1 \leq i,j \leq n, (i,j) \neq (i_{0},j_{0}), (i_{1},j_{1})},\Delta)
\end{multline*}
where $(i_{1},j_{1})$ is such that $\lambda_{i_{1},j_{1}} \neq 0$. The second equality follows from the fact that $X_{i_{0},j_{0}} \in k((X_{i,j})_{1 \leq i,j \leq n, (i,j) \neq (i_{0},j_{0})},\Delta)$ which itself follows from the equality $$
\det (X_{i,j})_{1 \leq i,j \leq n}=\sum_{j=1}^{n} X_{i_{0},j} \delta_{i_{0},j}=\Delta^{q}
$$
where $\delta_{i,j}$ is the $(i,j)$ cofactor of $(X_{i,j})_{1 \leq i,j \leq n}$. The third equality follows from the fact that $X_{i_{1},j_{1}}$ belongs to the $k$-span of $a$ and $(X_{i,j})_{1 \leq i,j \leq n, (i,j) \neq (i_{0},j_{0}), (i_{1},j_{1})}$. 
Since 
$$
\operatorname{tr.deg}(L/k)=n^{2} \text{ and } \sharp \{a,(X_{i,j})_{1 \leq i,j \leq n, (i,j) \neq (i_{0},j_{0}), (i_{1},j_{1})},\Delta\} \leq n^{2},
$$ 
we get that the family $(a,(X_{i,j})_{1 \leq i,j \leq n, (i,j) \neq (i_{0},j_{0}), (i_{1},j_{1})},\Delta)$ is algebraically independent over $k$. In particular, $k(a)$ is algebraically closed in  $L$.

Let us now assume that the $\lambda_{i,j}$ are nonzero. We have 
$$
L=
k((X_{i,j})_{1 \leq i,j \leq n},\Delta)
=
k(a,(X_{i,j})_{1 \leq i,j \leq n, (i,j) \neq (1,1)},\Delta)\\
$$ 
The latter equality follows from the fact that $X_{1,1}$ belongs to the $k$-span of $a$ and $(X_{i,j})_{1 \leq i,j \leq n, (i,j) \neq (1,1)}$.
{We} have 
 $$
 X_{2,1}= \lambda_{2,1}^{-1} a - \sum_{\substack{1 \leq i,j \leq n \\ (i,j) \neq (1,1),(2,1)}}  \lambda_{2,1}^{-1} \lambda_{i,j} X_{i,j} - \lambda_{2,1}^{-1} \lambda_{1,1} X_{1,1}. 
 $$
 {Furthermore}  the equality $\det (X_{i,j})_{1 \leq i,j \leq n}=\Delta^{q}$ ensures that 
 $$
 X_{1,1} = -\frac{\delta_{2,1}}{\delta_{1,1}} X_{2,1} - \sum_{i=3}^{n}  \frac{\delta_{i,1}}{\delta_{1,1}} X_{i,1} + \frac{\delta^{q}}{\delta_{1,1}}
 $$
 where $\delta_{i,j}$ is the $(i,j)$ cofactor of $(X_{i,j})_{1 \leq i,j \leq n}$. 
 It follows that 
\begin{multline*}
 \left(1 + \lambda_{2,1}^{-1} \lambda_{1,1}\frac{\delta_{2,1}}{\delta_{1,1}}\right)X_{2,1} = \\ \lambda_{2,1}^{-1} a - \sum_{\substack{1 \leq i,j \leq n \\ (i,j) \neq (1,1),(2,1)}}  \lambda_{2,1}^{-1} \lambda_{i,j} X_{i,j} + \lambda_{2,1}^{-1} \lambda_{1,1}\left(\sum_{i=3}^{n}  \frac{\delta_{i,1}}{\delta_{1,1}} X_{i,1} - \frac{\delta^{q}}{\delta_{1,1}}\right). 
\end{multline*}
 This shows that $X_{2,1}$ belongs to $k((X_{i,j})_{1 \leq i,j \leq n, (i,j) \neq (1,1),(2,1)},\Delta)$ and, hence, that 
 $$
L=
k(a,(X_{i,j})_{1 \leq i,j \leq n, (i,j) \neq (1,1),(1,2)},\Delta).
$$ 
 We can now conclude as above by a transcendence degree argument.  
\end{pproof}

\section{The general reductive case}\label{sec:genred}

\begin{lem}\label{lem : isogenies tori}
 Let $T_{1}$ and $T_{2}$ be tori of dimension $d$. Let $\phi,\varphi: T_{1} \rightarrow T_{2}$ be two isogenies. Then, there exist $N \in \mathbb{Z}_{\geq 1}$ and an isogeny $\psi: T_{2} \rightarrow T_{2}$ such that $\varphi^{N}=\psi \circ \phi$.  
\end{lem}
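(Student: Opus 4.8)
The plan is to translate the whole statement into $\mathbb{Z}$-linear algebra via the character lattices of the tori. Since the base field $C$ is algebraically closed of characteristic zero, we may fix isomorphisms $T_{1}\cong(C^{\times})^{d}\cong T_{2}$, which identify $\operatorname{Hom}(T_{1},T_{2})$ with $M_{d}(\mathbb{Z})$; under this identification the zero/sum structure is the obvious one, composition of homomorphisms becomes multiplication of matrices, an isogeny corresponds exactly to a matrix of nonzero determinant, and the $N$-th power map $\varphi^{N}\colon t\mapsto\varphi(t)^{N}$ corresponds to scalar multiplication by $N$ of the matrix of $\varphi$. (Equivalently, one works with the dual lattices $X^{*}(T_{i})$, where isogenies correspond to injective maps with finite cokernel.) Setting up this dictionary carefully is the only point that requires any thought; everything afterwards is a short computation.

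With $P$ the matrix of $\phi$ and $R$ the matrix of $\varphi$ (both with $\det\neq0$, since $\phi,\varphi$ are isogenies), the sought identity $\varphi^{N}=\psi\circ\phi$ becomes the matrix equation $NR=QP$, with $Q$ the unknown matrix of $\psi$. First I would take $N:=\lvert\det P\rvert$ and $Q:=NRP^{-1}$. The key observation is that $P^{-1}=(\det P)^{-1}\operatorname{adj}(P)$ has entries in $(\det P)^{-1}\mathbb{Z}$, so the factor $\lvert\det P\rvert$ clears the denominator and $Q=\pm R\operatorname{adj}(P)$ is a genuine integer matrix. Then $QP=NR$ by construction, and $\det Q=N^{d}\det R/\det P\neq0$ because $\det R\neq0$; hence $Q$ is the matrix of an isogeny $\psi\colon T_{2}\to T_{2}$ and, translating back, $\psi\circ\phi=\varphi^{N}$, as required.

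The hard part is therefore not the argument itself but justifying the equivalence between tori and their character (co)lattices at the level of precision needed: that every $d$-dimensional torus over $C$ is split, that $\operatorname{Hom}(T_{1},T_{2})\cong M_{d}(\mathbb{Z})$ additively and compatibly with composition, and that isogenies are exactly the nondegenerate matrices. This is classical (the theory of diagonalizable groups), so in the write-up I would simply cite it and then give the one-line matrix computation above. If one prefers to stay intrinsic, the same proof reads: $\phi^{*}\colon X^{*}(T_{2})\to X^{*}(T_{1})$ is injective with finite cokernel, let $e$ be the exponent of $\operatorname{coker}\phi^{*}$, put $N:=e$ and $\psi^{*}:=(\phi^{*})^{-1}\circ(e\,\varphi^{*})$; this lands in $X^{*}(T_{2})$ precisely because $e$ annihilates $\operatorname{coker}\phi^{*}$, it is injective with finite cokernel (all maps are isomorphisms after $\otimes\mathbb{Q}$), and by faithfulness of the character functor the corresponding isogeny $\psi$ satisfies $\psi\circ\phi=\varphi^{N}$.
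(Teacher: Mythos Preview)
Your proof is correct and follows essentially the same approach as the paper's: both reduce to the character lattice $X(T)\cong\mathbb{Z}^{d}$ after identifying $T_{1}\cong T_{2}\cong(C^{\times})^{d}$, and both find $N$ so that each component of $\varphi^{N}$ lies in the rank-$d$ subgroup generated by the components of $\phi$. The only difference is cosmetic: the paper argues abstractly that such an $N$ exists because $\langle\phi_{1},\dots,\phi_{d}\rangle$ has finite index in $X(T)$, whereas you give the explicit choice $N=\lvert\det P\rvert$ via the adjugate (or, in your intrinsic variant, the exponent of $\operatorname{coker}\phi^{*}$).
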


\begin{pproof}
Using the fact that $T_{1}$ and $T_{2}$ are isomorphic to $(C^{\times})^{d}$, we can assume that $T=T_{1}=T_{2}=(C^{\times})^{d}$. We let $X(T)$ be the character group of $T$; it is a free abelian group of rank $d$.  We set $\phi=(\phi_{1},\ldots,\phi_{d})$ and $\varphi=(\varphi_{1},\ldots,\varphi_{d})$; the $\phi_{i}$ and $\varphi_{i}$ are thus elements of $X(T)$. Since $\phi:T \rightarrow T$ is an isogeny, $\phi_{1},\ldots,\phi_{d}$ are multiplicatively independent elements of  $X(T)$ and, hence, they generate a free subgroup $\langle \phi_{1},\ldots,\phi_{d} \rangle$ of rank $d$ of $X(T)$. Thus, every element of $X(T)$ has a positive power belonging to $\langle \phi_{1},\ldots,\phi_{d} \rangle$. In particular, there exist a positive integer $N$ such that the $\varphi_{i}^{N}$ belong to $\langle \phi_{1},\ldots,\phi_{d} \rangle$. Therefore, there exists $\psi_{i} \in X(T)$ such that $\varphi_{i}^{N}=\psi_{i} \circ \phi$. Thus, the morphism $\psi =(\psi_{1},\ldots,\psi_{d}): T \rightarrow T$ is such that $\varphi^{N}=\psi \circ \phi$.  Since $\varphi^{N}$ is an isogeny, $\psi$ is an isogeny as well. 
\end{pproof}

\begin{prop} \label{prop: general reductive}
Let $f \in K^{\times} \setminus k$ satisfy a nonzero homogeneous linear differential equation $L(f)=0$ over $k$ of order $n$ and assume that the differential Galois group over $k$ of the latter equation is reductive and has a simply connected derived subgroup.  Let $g \in K$ satisfy a nonzero homogeneous linear differential equation over $k$. Assume that $f$ and $g$ are algebraically dependent over $k$. 

\begin{enumerate}
\item If $F$ is a differential field extension of $k(f,g)$ with field of constants $C$ containing a basis of solutions $f_{1}=f,f_{2},\ldots,f_{n}$ of $L$, then 
$$
g \in k[W(f_{1},\ldots,f_{n}),y_{1}^{-1/q},\ldots,y_{s}^{-1/q}]
$$
where $W(f_{1},\ldots,f_{n})$ denotes the wronskian matrix associated to $f_{1},\ldots,f_{n}$, the $y_{i} \in k[W(f_{1},\ldots,f_{n})]$ are such that $y_{i}'/y_{i} \in k$ and $q$ is a positive integer. \ 
\item Assume 
\vspace{.05in}
\begin{center}
(*) $k(f)$ is relatively algebraically closed in  $k(W(f_{1},\ldots,f_{n}),y_{1}^{-1/q},\ldots,y_{s}^{-1/q})$.
\end{center} 
\vspace{.05in}
Then,
$$
g \in k[f].
$$ 

\end{enumerate}
\end{prop}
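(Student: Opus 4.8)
The plan is to mimic the structure of the proof of Proposition~\ref{prop: case GL}, replacing the special features of $\GL_n(C)$ by the general reductive hypotheses. First I would set up the same Picard-Vessiot machinery: take $M$ of minimal order $m$ annihilating $g$, form the direct sum system built from $L$ and $M$, let $E$ be a Picard-Vessiot extension with Galois group $G$, and let $E_A$ (resp. $E_B$) be the Picard-Vessiot subextensions for $L$ (resp. $M$) with groups $G_A = G_L$ and $G_B$. Exactly as before, minimality of $M$ gives $\operatorname{span}_C Gg = \operatorname{span}_C\{g_1,\dots,g_m\}$, and algebraic dependence of $f,g$ over $k$ (with $f \notin k$, hence $f$ transcendental over $k$ since $G_L$ is reductive acting nontrivially — here one should note $f \notin k$ already forces transcendence because otherwise $f$ would be fixed by the connected reductive $G_L$ contradicting $f \notin k$) forces $E$ to be a finite extension of $E_A$, so that $\pi_A: G \to G_A$ is an isogeny with $G$ connected.

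Next I would invoke Lemma~\ref{lem isogeny} applied to $\pi_A : G \to G_A$: since $G_A = G_L$ is reductive with $G_A = Z_{G_A}^0 G_A'$, we conclude $G = Z_G^0 G'$ with $Z_G^0$ a torus of the same dimension $d$ as $Z_{G_L}^0$, and $G'$ semisimple (and, by the isogeny restricted to derived subgroups plus the simple-connectedness of $G_L'$, in fact $G' \to G_L'$ is an isomorphism). The torus $Z_G^0$ acts on the solution space of $M$ through a sum of characters, so after conjugation by some $P \in \operatorname{diag}(I_n, \GL_m(C))$ we may split $Y' = BY$ into blocks $B_1,\dots,B_\ell$ on which $Z_G^0$ acts by distinct characters. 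Then, as in the $\GL_n$ case, I would introduce an auxiliary system where $L$ is replaced by the pair $(\det$-type part$)$ and the trace-zero part, adjoin sufficiently many roots (a $q$-th root) so that the relevant characters of the torus become powers of a single coordinate — this is where Lemma~\ref{lem : isogenies tori} is used to match two isogenies between $d$-dimensional tori up to a power $N$ and an isogeny $\psi$. Passing to the group $H_N$ after adjoining these roots, the restriction to the derived subgroup $H_N' \to G_L'$ is an isomorphism (simple connectedness again), so there is a morphism $u: G_L' \to G_B$ with $\pi_B\circ\varrho = u\circ(\pi_A\circ\varrho)$ on derived subgroups, while on $Z^0_{H_N}$ everything is a monomial in the single parameter.

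Combining these, the tannakian correspondence shows each $B_i$-system is equivalent over $k$ to $\operatorname{Constr}_i(\widetilde{A})$ tensored with a rank-one system whose solution is a $q$-th root of a product of entries of $\mathfrak{Y}_L$ that are logarithmic derivatives over $k$ — these are the $y_i$ in the statement, with $y_i'/y_i \in k$ following because $Z_{G_L}^0$ leaves the corresponding lines invariant and a one-dimensional representation of the Picard-Vessiot ring has logarithmic derivative in $k$. This gives $g \in k[W(f_1,\dots,f_n), y_1^{-1/q},\dots,y_s^{-1/q}]$, proving part (1). For part (2), if (*) holds then $g \in k(f)$ since $g$ is algebraic over $k(f)$; Corollary~\ref{cor f dans k de g} then gives either $g \in k[f]$ or a torus-type alternative $f = c + d\theta$, $g \in k[\theta,\theta^{-1}]$ with $\theta'/\theta \in k$, and the latter is impossible because it would force $C\theta$ to be a $G_L$-stable line on which $G_L$ acts by a character, hence $G_L$ would have a nontrivial character whose kernel contains $G_L'$ — that is allowed in general (unlike the simply connected case!), so here the obstruction must instead be ruled out by the irreducibility or by the structure of how $f$ sits in $k[W(f_1,\dots,f_n)]$: I would argue that $f$ being transcendental with $f = c + d\theta$, $\theta^{m} \in k(f)$ would contradict (*) unless $m=1$, and $\theta \in k(f)$ with $\theta'/\theta \in k$ forces, via Kolchin's theorem comparing $L$'s solution space, a contradiction with $\dim = n \geq 2$ of an irreducible-type block — more carefully, the first claim already expresses $g$ in terms of $\mathfrak{Y}_L$ and the $y_i$, so one checks directly this cannot produce a genuine $k[\theta,\theta^{-1}]$ with $\theta \notin k[f]$. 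The main obstacle I expect is precisely this last point: unlike the simply connected case, a reductive $G_L$ can have nontrivial characters, so ruling out the torus alternative in part (2) requires leveraging hypothesis (*) itself rather than Lemma~\ref{lem:notorus}, and also the bookkeeping of the roots $y_i^{-1/q}$ and verifying $y_i'/y_i \in k$ needs care to keep the $y_i$ genuinely inside $k[W(f_1,\dots,f_n)]$ rather than only in a radical extension.
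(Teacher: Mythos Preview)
Your outline has the right overall shape but misses the one structural step that distinguishes the general reductive case from the $\GL_n$ case and that actually produces the $y_i$ in the statement.

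The paper's first move, before touching $M$ or $B$, is to use reductivity of $G_L$ to replace $Y'=AY$ by an equivalent block-diagonal system $Y'=\overline{A}Y$ with $\overline{A}=\operatorname{diag}(A_1,\ldots,A_s)$ and each $Y'=A_iY$ irreducible over $k$. One then separates off the trace of \emph{each} block, setting
\[
\widetilde{A}=\overline{A}-\operatorname{diag}\bigl(\tfrac{\tr A_1}{n_1}I_{n_1},\ldots,\tfrac{\tr A_s}{n_s}I_{n_s}\bigr),
\]
and works with the auxiliary system $Y'=\operatorname{diag}(\tfrac{\tr A_1}{n_1},\ldots,\tfrac{\tr A_s}{n_s},\widetilde{A},B)Y$. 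The $y_j$ of the statement are exactly the solutions of $y_j'=\tr(A_j)\,y_j$, i.e.\ the determinants of the fundamental matrices of these $s$ irreducible pieces of $L$; this is why $y_j\in k[W(f_1,\ldots,f_n)]$ and $y_j'/y_j\in k$ hold on the nose. Your write-up instead keeps $A$ intact, splits $B$ via the torus action (as in the $\GL_n$ proof), and speaks of a single ``$\det$-type part'' and of making torus characters into ``powers of a single coordinate''. But $Z_{G_L}^0$ is a $d$-dimensional torus here, and a single $\tr(A)/n$ cannot capture it when $d>1$: the whole point of the $s$-fold splitting of $A$ is to obtain $s$ scalar coordinates through which one can build an isogeny $\Xi:Z_H^0\to Z_G^0$ of the form $\operatorname{diag}(c_1,\ldots,c_s,\widetilde{g}_1,\ldots,\widetilde{g}_s,g)\mapsto\Xi(c_1,\ldots,c_s)$. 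Lemma~\ref{lem : isogenies tori} is then applied to two isogenies between $d$-dimensional tori, yielding $\rho|_{Z_H^0}^N=\psi\circ\Xi$; after passing to $H_N$ the isomorphism $H_N'\cong G_L'$ via simple-connectedness and the factorization $\pi_B\circ\varrho = u\circ(\pi_A\circ\varrho)$ on derived subgroups go exactly as you describe. Without the block decomposition of $A$ you have no concrete candidates for the $y_i$ and no way to parametrize the higher-dimensional central torus.

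On part~(2): you correctly flag that the torus alternative from Corollary~\ref{cor f dans k de g} is not killed by Lemma~\ref{lem:notorus} in the reductive case. The paper's written proof in fact stops after establishing part~(1); part~(2) is left implicit along the lines of Proposition~\ref{prop sln}.2 (once $(\ast)$ forces $g\in k(f)$, invoke Corollary~\ref{cor f dans k de g}). So your unease there is legitimate and not a divergence from what the paper actually writes down.
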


\begin{remark}
Actually, the proof of Proposition \ref{prop: general reductive} gives the following more precise informations. 
We let $L=L_{1}\cdots L_{r}$ be a factorization  of $L$ over $k$ {into irreducible factors}. The proof of Proposition \ref{prop: general reductive} shows that $s \leq r$ and that the $y_{i}$ can be chosen such that  {$y_{i}$ are solutions of $y'=tr(A_{i}) y$ where $ A_i$ is a certain matrix gauge equivalent to the companion matrix of $L_i(Y) = 0$.}
In particular, with these {refinements} in mind, we recover the main ingredients of the proof of Proposition \ref{prop: case GL} (the only additional ingredient of the proof of Proposition \ref{prop: case GL} is the fact that the hypothesis (*) is satisfied under the assumptions of Proposition \ref{prop: case GL}). 
\end{remark}

\begin{pproof} 
Let $Y'=AY$ be the linear differential system corresponding to $L$. Its differential Galois group over $k$ (the same as $L$) being reductive, we have that $Y'=AY$ is equivalent over $k$ to 
$$
Y'=\overline{A}Y, \ \ \ \overline{A}=\operatorname{diag}(A_{1},\ldots,A_{s}) 
$$ 
for some $A_{i} \in M_{n_{i}}(k)$ such that $Y'=A_{i}Y$ is irreducible over $k$. 

Let $M$ be a nonzero linear differential operator with coefficients in $k$ annihilating $g$ and of minimal order $m$ for this property. Let $Y'=BY$ be the corresponding linear differential system.

We let $E$ be a Picard-Vessiot extension over $k$ of the differential system  
\begin{equation}\label{diff syst direct sum quint}
 Y'=
\begin{pmatrix}
 \overline{A}&0\\
 0&B
\end{pmatrix}
Y
\end{equation}
containing $f$ and $g$. Let $G$ be the corresponding differential Galois group over $k$. The differential system \eqref{diff syst direct sum quint} has a fundamental solution of the form  
$$
\mathfrak{Y} 
=
\begin{pmatrix}
 \mathfrak{Y}_{ \overline{A}} & 0\\
 0 & \mathfrak{Y}_{B} 
\end{pmatrix} 
\in \GL_{m+n}(E)
$$
where $\mathfrak{Y}_{ \overline{A}}$ is a fundamental solution of $Y'=\overline{A} Y$ and  $\mathfrak{Y}_{B}$ is a fundamental solution of $Y'=B Y$ 
(thus, $E$ is the field extension of $k$ generated by the entries of  $\mathfrak{Y}_{\overline{A}}$ and $\mathfrak{Y}_{B}$). 
The field extension $E_{\overline{A}}$ (resp. $E_{B}$) of $k$ generated by the entries of $ \mathfrak{Y}_{ \overline{A}}$ (resp. by the $ \mathfrak{Y}_{B}$) is a  Picard-Vessiot extension over $k$ for $Y'=\overline{A}Y$ (resp. $Y'=BY$); we have $f \in E_{\overline{A}}$ and $g \in E_{B}$. We let $G_{\overline{A}}$ (resp. $G_{B}$) be the corresponding differential Galois group over $k$. We {recall} that $G_{\overline{A}}$ is reductive with simply connected derived subgroup $G_{\overline{A}}'$; we will denote by $d$ the dimension of the torus $Z_{G_{\overline{A}}} ^{0}$. 

By minimality of $M$, we have that $E_{B}$ is generated, as a differential field extension of $k$, by $Gg=G_{B}g$
(we {recall} that $Gg=G_{B}g$ because the restriction morphism $G \rightarrow G_{B}$ is surjective). 
Moreover, $E_{\overline{A}}$ is left invariant by $G$. These facts, together with the fact that $g$ is algebraic over $E_{A}$, imply that $E$ is a finite extension of $E_{\overline{A}}$. 

{We again have} that the restriction morphism  
$
\pi_{\overline{A}}: G \rightarrow G_{\overline{A}}
$
is surjective and hence induces a short exact sequence 
$$
0 \rightarrow J \rightarrow G \xrightarrow[]{\pi_{\overline{A}}} G_{\overline{A}} \rightarrow 0. 
$$
The kernel $J$ (which is the group of differential fields automorphisms of $E$ over $E_{\overline{A}}$) is finite because $E$ is a finite extension of $E_{\overline{A}}$.

Applying Lemma \ref{lem isogeny} to $\pi_{\overline{A}}: G \rightarrow G_{\overline{A}}$, we get that $G$ is reductive and that $Z_{G}^{0}$ is a torus of dimension $d$. 

Let 
$$
\widetilde{A}=\overline{A}- \text{diag}(\operatorname{tr}(A_{1})/n_{1}I_{n_{1}},\ldots,\operatorname{tr}(A_{s})/n_{s}I_{n_{s}})
$$ 
and consider the differential system 
$$
 Y'=
 \text{diag}(\operatorname{tr}(A_{1})/n_{1},\ldots\operatorname{tr}(A_{s})/n_{s},\widetilde{A},B)
Y. 
$$
We let ${H}$ be the differential Galois group over $k$ of this equation. In the tannakian correspondance, 
the equation 
\eqref{diff syst direct sum quint}
 corresponds to the representation $\rho: {H} \rightarrow G$ given by 
$$
\rho:  \text{diag}(c_{1},\ldots,c_{s},\widetilde{g}_{1},\ldots,\widetilde{g}_{s},g) \mapsto  \text{diag}(c_{1}\widetilde{g}_{1},\ldots,c_{s}\widetilde{g}_{s},g). 
$$
Applying Lemma \ref{lem isogeny} to the isogeny $\rho: { H} \rightarrow G$ ({as before,} $\rho$ is onto, and it is clear that it has finite kernel because the $\widetilde{g}_{i}$ have determinant $1$), we have that ${ H}$ is reductive and that $Z_{H}^{0}=\rho^{-1}(Z_{G}^{0})^{0}$ is a torus of dimension $d$. 
We claim that there exists an isogeny $
\Xi:Z_{H}^{0} \rightarrow Z_{G}^{0}
$
of the form 
$$
\text{diag}(c_{1},\ldots,c_{s},\widetilde{g}_{1},\ldots,\widetilde{g}_{s},g) \mapsto
\Xi(c_{1},\ldots,c_{s}), 
$$
an isogeny $\psi: Z_{G}^{0} \rightarrow Z_{G}^{0}$ and a positive integer $N$ such that 
$$
\rho_{\vert Z_{H}^{0}}^{N}=\psi \circ \Xi.
$$
Indeed, the image of $Z_{H}^{0}$ by 
$$
\text{diag}(c_{1},\ldots,c_{s},\widetilde{g}_{1},\ldots,\widetilde{g}_{s},g) \mapsto
\text{diag}(c_{1},\ldots,c_{s})
$$
is a torus of dimension $d$ (this is because $\pi_{\overline{A}} \circ \rho: H \rightarrow G_{\overline{A}}$, which is given by 
$$
\text{diag}(c_{1},\ldots,c_{s},\widetilde{g}_{1},\ldots,\widetilde{g}_{s},g) \mapsto  \text{diag}(c_{1}\widetilde{g}_{1},\ldots,c_{s}\widetilde{g}_{s}) 
$$ 
is an isogeny and, hence, induces an isogeny $Z_{H} \rightarrow Z_{G_{\overline{A}}}$, but $Z_{G_{\overline{A}}}$ is made of diagonal matrices of the form 
$$
\operatorname{diag}(*I_{n_{1}},\ldots,*I_{n_{s}}),
$$ 
and the $\widetilde{g}_{i}$ have determinant $1$, 
so $(\pi_{\overline{A}} \circ \rho_{\vert Z_{H}})^{n_{1}\cdots n_{s}}$ can be described as follows~:
$$
(\pi_{\overline{A}} \circ \rho_{\vert Z_{H}})^{n_{1}\cdots n_{s}}: 
 \text{diag}(c_{1},\ldots,c_{s},\widetilde{g}_{1},\ldots,\widetilde{g}_{s},g) \mapsto  \text{diag}(c_{1}I_{n_{1}},\ldots,c_{s}I_{n_{s}})^{n_{1}\cdots n_{s}}
$$
 and hence the image of $Z_{H}$ by 
$$
\text{diag}(c_{1},\ldots,c_{s},\widetilde{g}_{1},\ldots,\widetilde{g}_{s},g) \mapsto
\text{diag}(c_{1}I_{n_{1}},\ldots,c_{s}I_{n_{s}})
$$
has the same dimension as the image of $\pi_{\overline{A}} \circ \rho_{\vert Z_{H}}$, which is equal to the dimension of $Z_{G_{\overline{A}}}$), 
so there exists 
an isogeny $
\Xi:Z_{H}^{0} \rightarrow Z_{G}^{0}
$
of the form 
$$
\text{diag}(c_{1},\ldots,c_{s},\widetilde{g}_{1},\ldots,\widetilde{g}_{s},g) \mapsto
\Xi(c_{1},\ldots,c_{s}).
$$
But $\rho_{\vert Z_{H}^{0}}: Z_{H}^{0} \rightarrow Z_{G}^{0}$ is also an isogeny, so Lemma \ref{lem : isogenies tori} ensures that there exist a positive integer $N$ and an isogeny $\psi: Z_{G}^{0} \rightarrow Z_{G}^{0}$ such that 
$$
\psi \circ \Xi=\rho_{\vert Z_{H}^{0}}^{N}.
$$
This proves our claim.

Consider the differential system  
$$
 Y'=
 \text{diag}(\operatorname{tr}(A_{1})/(Nn_{1}),\ldots,\operatorname{tr}(A_{s})/(Nn_{s}),\widetilde{A},B) 
Y.
$$
Let $H_{N}$ be the differential Galois group over $k$ of this system. In the tannakian correspondance, 
the equation 
$$
 Y'=
 \text{diag}(A,B)
Y
$$
 corresponds to 
 $$
 \varrho:= \rho \circ p: H_{N} \rightarrow G
 $$ 
 where $p: H_{N} \rightarrow H$ (which corresponds to $Y'=
 \text{diag}(\operatorname{tr}(A_{1})/n_{1},\ldots\operatorname{tr}(A_{s})/n_{s},\widetilde{A},B)
Y$ in the tannakian correspondence) is given by
$$
p: \operatorname{diag}(e_{1},\ldots,e_{s},\widetilde{g}_{1},\ldots,\widetilde{g}_{s},g) \mapsto \operatorname{diag}(e_{1}^{N},\ldots,e_{s}^{N},\widetilde{g}_{1},\ldots,\widetilde{g}_{s},g). 
$$
Applying Lemma \ref{lem isogeny} to $p: H_{N} \rightarrow H$, we get that $H_{N}$ is reductive and that 
$Z_{H_{N}}^{0}=p^{-1}(Z_{H}^{0})^{0}$ is torus of dimension $d$. 

If $\operatorname{diag}(e_{1},\ldots,e_{s},\widetilde{g}_{1},\ldots,\widetilde{g}_{s},g)   \in Z_{H_{N}}^{0}$, then 
\begin{multline*}
 \varrho(\operatorname{diag}(e_{1},\ldots,e_{s},\widetilde{g}_{1},\ldots,\widetilde{g}_{s},g) )^{N}=\rho_{\vert Z_{H}^{0}}(\operatorname{diag}(e_{1}^{N},\ldots,e_{s}^{N},\widetilde{g}_{1},\ldots,\widetilde{g}_{s},g))^{N}\\ =\psi \circ \Xi(e_{1}^{N},\ldots,e_{s}^{N})
=\psi \circ \Xi(e_{1},\ldots,e_{s})^{N}
\end{multline*}
so 
$$
\varrho_{\vert Z_{H_{N}}^{0}}=\psi \circ \Xi.
$$

Moreover, the representation of $H_{N}$ corresponding to $Y'=\overline{A}Y$ in the tannakian correspondence is   
$$
\varpi_{\overline{A}} = \pi_{\overline{A}} \circ \varrho: H_{N} \rightarrow G_{\overline{A}}.
$$
It is surjective (general fact) and has finite kernel (because $\varrho$ and $\pi_{\overline{A}}$ have finite kernels), so it is an isogneny. Therefore, it induces an isogeny 
$$
\varpi_{\overline{A}\vert H_{N}'} = \pi_{\overline{A}} \circ \varrho: H_{N}'\rightarrow G_{\overline{A}}'.
$$
Since $G_{\overline{A}}'$ is a connected and simply connected algebraic group, 
$$
\varpi_{\overline{A}\vert H_{N}'} = \pi_{\overline{A}} \circ \varrho: H_{N}'\rightarrow G_{\overline{A}}'.
$$
is actually an isomorphism. Since $G=\operatorname{im}(\pi_{\overline{A}} \circ \varrho \oplus \pi_{B} \circ \varrho)$, it follows that there exists an algebraic group morphism $u: G_{\overline{A}}' \rightarrow G_{B}'$ such that 
$$
\pi_{B} \circ \varrho= u \circ (\pi_{\overline{A}} \circ \varrho).
$$
So, the restriction of $\varrho$ to the derived subgroup $H_{N}'$ of $H_{N}$ is given by 
$$
\varrho_{\vert H_{N}'}: 
 (1,\widetilde{g}_{1},\ldots,\widetilde{g}_{s},g) \mapsto \operatorname{diag}(\widetilde{g}_{1},\ldots,\widetilde{g}_{s},u(\widetilde{g}_{1},\ldots,\widetilde{g}_{s})).
$$

Finally, we see that the representation $\varpi_{B} =\pi_{B} \circ \varrho$ of $H_{N}$ corresponding to $Y'=BY$ in the tannakian correspondence is   
given by 
$$
\varpi_{B} =\pi_{B} \circ \varrho: \operatorname{diag}(e_{1},\ldots,e_{s},\widetilde{g}_{1},\ldots,\widetilde{g}_{s},g)  \mapsto \psi \circ \Xi(e_{1},\ldots,e_{s}) u(\widetilde{g}_{1},\ldots,\widetilde{g}_{s}).
$$

It follows that the differential system $Y'=BY$ is equivalent over $k$ to 
$$
Y'=\operatorname{diag}(B_{1},\ldots,B_{\ell})Y
$$ 
where each $B_{i} \in M_{m_{i}}(k)$ has the form 
$$
\operatorname{Constr}_{i}(\widetilde{A})+\sum_{j=1}^{s}\alpha_{i,j}\operatorname{tr}(A_{j})/(n_{j}N)I_{m_{j}}
$$ 
for some $\alpha_{i,j} \in \mathbb{Z}$ and some $\operatorname{Constr}_{i}(\widetilde{A})$ obtained from $\widetilde{A}$ by some construction of linear algebra. It follows that the entires of the solutions of $Y'=BY$ belong to $k[\mathfrak{Y}_{\overline{A}},\sqrt[q]{y_{1}}^{-1},\ldots,\sqrt[q]{y_{s}}^{-1}]$ for some positive integer $q$, where each $y_{j}$ satisfies $y_{j}'=\operatorname{tr}(A_{j}) y_{j}$. 
\end{pproof}

\section{Applications}\label{sec:applications}

In this section we will apply the previous results to generalized hypergeometric series and to iterated integrals.
\subsection{Generalized hypergeometric series} 
\label{sec:hyper}

The aim of this Section is to prove Theorem \ref{thm intro : application to alg rel hypergeo} stated in Section \ref{sec : intro}.

\subsubsection{The generalized hypergeometric series and equations}

We {recall} that, for any $p,q \in \ZZ_{\geq 0}$,  the generalized hypergeometric series with parameters $\und \alpha =(\alpha_{1},\ldots,\alpha_{p}) \in \CC^{p}$ and $\und \beta =(\beta_{1},\ldots,\beta_{q}) \in (\CC\setminus \mathbb{Z}_{\leq 0})^{q}$ is given by 
\begin{equation*}
\pFq{p}{q}{\und \alpha}{\und \beta}{x}=\sum_{k=0}^{+\infty} \frac{(\alpha_{1})_k \cdots (\alpha_{p})_k}{(\beta_{1})_k \cdots (\beta_{q})_k} \frac{x^k}{k!} \in \CC((x))
\end{equation*}
where the Pochhammer symbols $(t)_{k}$ are defined by $(t)_0=1$ and, for $k \in \mathbb{Z}_{\geq 1}$, $(t)_k=t(t+1)\cdots (t+k-1)$. 

This series satisfies the generalized hypergeometric differential equation 
$$
\hypergeoequa{p}{q}{\und \alpha}{\und \beta}(\pFq{p}{q}{\und \alpha}{\und \beta}{x}) =0 
$$
where 
\begin{equation*} 
\hypergeoequa{p}{q}{\und \alpha}{\und \beta} = \delta \prod_{k=1}^{q} (\delta + \beta_k-1) - x \prod_{k=1}^{p} (\delta + \alpha_k)
\end{equation*}
with $\delta=x\frac{d}{dx}$.

In that case, $\hypergeoequa{p}{q}{\und \alpha}{\und \beta}$ is a linear differential operator of order $q+1$, and 
\begin{itemize}
\item if $q+1=p$, then $\hypergeoequa{p}{q}{\und \alpha}{\und \beta}$ has order $q+1=p$ and has at most three singularities, namely $0$, $1$ and $\infty$, all regular; 
\item if $q+1>p$, then $\hypergeoequa{p}{q}{\und \alpha}{\und \beta}$ has order $q+1$ and has at most two singularities, namely $0$ and $\infty$; $0$ is regular whereas $\infty$ is irregular. 
\item if $q+1< p$, then $\hypergeoequa{p}{q}{\und \alpha}{\und \beta}$ has order $p$ and has at most two singularities, namely $0$ and $\infty$; $0$ is irregular whereas $\infty$ is regular. 
\end{itemize}

\subsubsection{Formal structure at $\infty$ of the generalized hypergeometric equations when $q+1>p$}\label{sec:formal structure hypergeo}

We shall now {recall} some basic facts concerning the  formal structure at $\infty$ of $\hypergeoequa{p}{q}{\und \alpha}{\und \beta}$  when $q+1>p$. Consider the uniformiser at $\infty$ given by $t=x^{-1}$. We have 
\begin{equation*} 
\hypergeoequa{p}{q}{\und \alpha}{\und \beta} = -t^{-1}\left((-1)^{q}t\delta_{t} \prod_{k=1}^{q} (\delta_{t} - \beta_k+1) -  (-1)^{p}\prod_{k=1}^{p} (\delta_{t} - \alpha_k)\right)
\end{equation*}
with $\delta_{t}=t\frac{d}{dt}$. 
By definition, the Newton polygon $\mathcal{N}_{\infty}(\hypergeoequa{p}{q}{\und \alpha}{\und \beta})$ of $\hypergeoequa{p}{q}{\und \alpha}{\und \beta}$ at $\infty$ is the convex hull of 
\begin{multline*}
 \{(x,y) \in \RR^{2} \ \vert \ \text{there is a monomial } t^{m} \delta_{t}^{n} \\ 
 \text{ in } \hypergeoequa{p}{q}{\und \alpha}{\und \beta} \text{ with } (x,y) \geq (n,m) \}
\end{multline*}
where $(x_{1},y_{1}) \geq (x_{2},y_{2})$ if and only if $x_{1} \leq x_{2}$ and $y_{1} \geq y_{2}$. 
This polygon has three extremal points, namely $(0,0)$, $(p,0)$ and $(q+1,1)$. Thus, the slopes of $\hypergeoequa{p}{q}{\und \alpha}{\und \beta}$ at $\infty$ are 
$$
\lambda_{1}=0 \text{ with multiplicity $p$ and } \lambda_{2}=1/\sigma \text{ with multiplicity $\sigma$,}
$$  
where
$$
\sigma=q-p+1. 
$$ 
It follows that the differential module $\mathcal{M}$ over $\CC(x)$ associated to $\hypergeoequa{p}{q}{\und \alpha}{\und \beta}$ satisfies 
$$
\CC((t)) \otimes_{\CC(x)} \mathcal{M} = \widehat{\mathcal{M}}_{\lambda_{1}} \oplus \widehat{\mathcal{M}}_{\lambda_{2}}
$$
where $\widehat{\mathcal{M}}_{\lambda_{1}}$ is regular singular of rank $p$ and $\widehat{\mathcal{M}}_{\lambda_{2}}$ is irregular of rank $\sigma$, with only one slope, namely $\lambda_{2}$. According to \cite[Remark 3.34]{VdPS}, we have 
$$
\CC((t^{1/\sigma})) \otimes_{\CC((x))} \widehat{\mathcal{M}}_{\lambda_{2}}=\oplus_{j=1}^{\sigma} \widehat{\mathcal{R}} \otimes_{\CC((t^{1/\sigma}))} \widehat{\mathcal{E}}(q_{j}(t^{-1/\sigma}))
$$ 
where: 
\begin{itemize}
 \item $\widehat{\mathcal{R}}$ is a regular singular differential module over $\CC((t^{1/\sigma}))$ of rank one, 
 \item for all $j \in \{1,\ldots,\sigma\}$, $q_{j}(t^{-1/\sigma})=q(\zeta_{j} t^{-1/\sigma})$ with $\zeta_{j} = e^{\frac{2 \pi i j}{\sigma}}$ for some $q(X) \in X\CC[X]$ of $X$-adic valuation $1$, 
\item $\widehat{\mathcal{E}}(q_{j}(t^{-1/\sigma}))$ is the rank one differential module over $\CC((t^{1/\sigma}))$ defined by $\widehat{\mathcal{E}}(q_{j}(t^{-1/\sigma}))=\CC((t^{1/\sigma})) e_{j}$ with $\partial (e_{j})=q_{j}(t^{-1/\sigma})e_{j}$. 
\end{itemize}
{Recall} that the slopes of $\widehat{\mathcal{M}}_{\lambda_{2}}$ can be computed using the $q_{j}(t^{-1/\sigma})$: they are the {negative} of the $t$-adic valuations of the $q_{j}(t^{-1/\sigma})$, which are equal all to $\frac{\deg_{X} q(X)}{\sigma}$. Since $\widehat{\mathcal{M}}_{\lambda_{2}}$ has a unique slope $\lambda_{2}=1/\sigma$, we get $\deg_{X} q(X)=1$ so $q(X)=aX$ is a monomial and  
$$
q_{j}(t^{-1/\sigma})=a \zeta_{j} t^{-1/\sigma}. 
$$

We shall now determine $a$. 
Set $w=t^{1/\sigma}$ and $\delta_{w}=w\frac{d}{dw}=\sigma \delta_{t}$. We have to find the $a$ such that the Newton polygon (with respect to $w$) of $e^{-aw^{-1}}\hypergeoequa{p}{q}{\und \alpha}{\und \beta} e^{aw^{-1}}$ has height $<q+1$ (because the slopes of the latter differential operator are $0$ and/or $1$). 
We have  
\begin{multline*}
 \hypergeoequa{p}{q}{\und \alpha}{\und \beta} =\\ -w^{-\sigma}\left((-1)^{q}w^{\sigma}\frac{\delta_{w}}{\sigma} \prod_{k=1}^{q} \left(\frac{\delta_{w}}{\sigma}  - \beta_k+1\right) -  (-1)^{p}\prod_{k=1}^{p} \left(\frac{\delta_{w}}{\sigma}  - \alpha_k\right)\right)
\end{multline*}
and 
\begin{multline*}
e^{-aw^{-1}}\hypergeoequa{p}{q}{\und \alpha}{\und \beta} e^{aw^{-1}} =\\ 
-w^{-\sigma}\left((-1)^{q}w^{\sigma}\left(\frac{\delta_{w}}{\sigma}-\frac{a}{\sigma}w^{-1}\right) \prod_{k=1}^{q} \left(\frac{\delta_{w}}{\sigma} -\frac{a}{\sigma}w^{-1} - \beta_k+1\right)\right. \\ 
\left. -  (-1)^{p}\prod_{k=1}^{p} \left(\frac{\delta_{w}}{\sigma}  -\frac{a}{\sigma}w^{-1} - \alpha_k\right)\right)
\end{multline*}
The coefficient of $\delta_{w}^{j}$ in $e^{-aw^{-1}}\hypergeoequa{p}{q}{\und \alpha}{\und \beta} e^{aw^{-1}}$ has $w$-valuation $\geq  -q+j-1$ and the $w$-valuation of $\delta_{w}^{q+1}$ is equal to $0$;  therefore, we have to find the $a$ such that the coefficient of $\delta_{w}^{0}$ in $e^{-aw^{-1}}\hypergeoequa{p}{q}{\und \alpha}{\und \beta} e^{aw^{-1}}$ has valuation $>q+1$. But the latter coefficient is of the form 
$$
\left(\frac{a}{\sigma}w^{-1}\right)^{q+1} - w^{-\sigma} \left(\frac{a}{\sigma}w^{-1}\right)^{p} + \text{ terms of higher degree in $w$}; 
$$ 
it has valuation $>q+1$ if and only if $(\frac{a}{\sigma})^{q+1}=(\frac{a}{\sigma})^{p}$ if and only if $a \in \sigma \mu_{\sigma}$. 

In conclusion, the list of determining polynomials\footnote{These ``determining polynomials'' are the ``eigenvalues'' of \cite[Definition 3.26]{VdPS}.} at $\infty$ of $\hypergeoequa{p}{q}{\und \alpha}{\und \beta}$ is 
$$
(0 \text{ repeated $p$ times},  \zeta_{1} \sigma t^{-1/\sigma},\ldots, \zeta_{\sigma} \sigma t^{-1/\sigma}) 
$$
with $\zeta_{j} = e^{\frac{2 \pi i j}{\sigma}}$.

\subsubsection{A preliminary remark concerning Proposition \ref{prop sln} in the $\SL_{2}(C)$ case}\label{subsec:rem on SL2 case}

The hypothesis of Proposition \ref{prop sln} is:
\begin{quote}
Let $f \in K^{\times}$ satisfy a nonzero homogeneous linear differential equation $L(f)=0$ over $k$ of order $n\geq 1$ and assume that the differential Galois group $G_{L}$ over $k$ of the latter equation is simply connected. Let $g \in K$ satisfy a nonzero homogeneous linear differential equation over $k$. Assume that $f \not \in k$ and that $f$ and $g$ are algebraically dependent over $k$. 
\end{quote}
With the notations of the proof of Proposition \ref{prop sln}, we have 
$$
\pi_{M}=u \circ \pi_{L}. 
$$ 
We shall now focus on the case $G_{L}=\SL_{2}(\CC)$. Then, the classification of the representations of $\SL_{2}(\CC)$ shows that the representation $u: G_{L} \rightarrow G_{M}$ is conjugate to the direct sum of symmetric power representations 
$$
\operatorname{Sym}^{m_{i}} : G_{L}=\SL_{2}(\CC) \rightarrow \SL_{m_{i}+1}(\CC).
$$ 
{Letting $\cM$ and $\cL$ denote the differential modules associated with $Y'= A_MY$ and $Y' = A_LY$, the tannakian correspondence implies that $\cM$ is isomorphic to the direct sum of the $\operatorname{Sym}^{m_{i}}(\cL)$}. We will now use this to study the algebraic relations between generalized hypergeometric series. 

\subsubsection{Proof of Theorem \ref{thm intro : application to alg rel hypergeo}}

The differential Galois group over $\overline{\CC(x)}$ of $L=\hypergeoequa{0}{1}{-}{\beta}$ is $\SL_{2}(\CC)$; see \cite[Theorem 3.6]{expsum}. As explained in Section \ref{subsec:rem on SL2 case}, the {differential module $\mathcal{M}$ associated to the} minimal nonzero differential equation $M$ with coefficients in $\overline{\CC(x)}$  annihilating $\pFq{p}{q}{\und \gamma}{\und \delta}{x}$ is isomorphic to a direct sum of symmetric powers of $\mathcal{L}$, say  
$$
\mathcal{M} \cong \oplus_{i=1}^{r} \operatorname{Sym}^{m_{i}}(\mathcal{L}).
$$

Let us first assume that $q+1 > p$. 
We have seen in Section \ref{sec:formal structure hypergeo} that $L$ is irregular at $\infty$: it has exactly one slope at $\infty$, namely $1/2$, and its list of determining polynomials at $\infty$ is $\pm 2 x^{1/2}$. Therefore, the list of the determining polynomials of $\operatorname{Sym}^{m_{i}}(\mathcal{L})$ at $\infty$  is 
\begin{equation}\label{det fact symm}
  -2m_{i}x^{1/2},2(-m_{i}+2)x^{1/2},2(-m_{i}+4)x^{1/2},\ldots,2(m_{i}-2)x^{1/2},2m_{i}x^{1/2}. 
\end{equation}
So, the list of determining polynomials of $\mathcal{M}$ is the concatenation of the lists \eqref{det fact symm} for $i$ varying in $\{1,\ldots,r\}$. 

On the other hand, $M$ is a factor of $\hypergeoequa{p}{q}{\und \gamma}{\und \delta}$ so (see Section \ref{subsec:rem on SL2 case}) the list of determining polynomials of $\mathcal{M}$ is a sublist of 
 $$
 0 \text{ with multplicity } p, \zeta_{1} \sigma x^{1/\sigma}, \ldots,\zeta_{\sigma} \sigma x^{1/\sigma}
 $$
with $\sigma=q-p+1$ and $\zeta_{j}=e^{\frac{2 \pi i j}{\sigma}}$. 

Comparing the two preceding descriptions of the determining factors of $\mathcal{M}$, we find that $\sigma=2$ and that we have 
\begin{itemize}
 \item either all the $m_{i}$ are equal to $0$;
  \item or one of  the $m_{i}$, say $m_{r}$, is equal to $1$ and the other $m_{i}$ are equal to $0$. 
\end{itemize}
 We claim that the first case cannot happen. 
Indeed, otherwise the entire function $\pFq{p}{q}{\und \gamma}{\und \delta}{x}$ would be algebraic and, hence, polynomial, which is false. 

Therefore, we have  
$$
\cM \cong \operatorname{Sym}^{0}(\cL) \oplus \cdots \oplus \operatorname{Sym}^{0}(\cL) \oplus \operatorname{Sym}^{1}(\cL). 
$$
It follows that $\pFq{p}{q}{\und \gamma}{\und \delta}{x}$ is of the form $a+bg+cg'$  for some $a,b,c \in \overline{\CC(x)}$, $b$ or $c \neq 0$, and some nonzero solution $g$ of $L$. Since the differential Galois group of $L$ over $\overline{\CC(x)}$ is $\SL_{2}(\CC)$, the only possibility for $a+bg+cg'$ to be algebraically depend with $\pFq{0}{1}{\alpha}{\beta}{x}$ is that $g= d \cdot \pFq{0}{1}{-}{\beta}{x}$ for some $d \in \CC$ and $c=0$. Therefore, 
$$
\pFq{p}{q}{\und \gamma}{\und \delta}{x}  \in \overline{\CC(x)} \pFq{0}{1}{-}{\beta}{x} 
+
\overline{\CC(x)}.
$$
The fact that we can descend this linear relation to $\CC(x)$ follows from the fact that $\pFq{p}{q}{\und \gamma}{\und \delta}{x}$ and $\pFq{0}{1}{-}{\beta}{x}$ are entire functions and that $\pFq{0}{1}{-}{\beta}{x}$ and $1$ are linearly independent over $\overline{\CC(x)}$. 

Let us now consider the case $q+1 \leq p$. In that case, we have seen that $\hypergeoequa{p}{q}{\und \gamma}{\und \delta}$ is regular at $\infty$. Since $M$ is a factor of $\hypergeoequa{p}{q}{\und \gamma}{\und \delta}$, $M$ and hence $\mathcal{M}$ are regular at $\infty$ as well. Since $\operatorname{Sym}^{m_{i}}(\mathcal{L})$ is irregular at $\infty$ if $m_{i}$ is nonzero, we infer that all the $m_{i}$ are equal to $0$. Therefore, $\pFq{p}{q}{\und \gamma}{\und \delta}{x}$ belongs to $\overline{\CC(x)}$. This excludes the case $q+1<p$, because the radius of convergence of $\pFq{p}{q}{\und \gamma}{\und \delta}{x}$ is $0$ in that case. 

\subsection{Iterated integrals}\label{sec:multint} From \cite[Chapter XVIII, Theorem 3.3]{Hochschild} we know that a unipotent group is simply connected.  This allows us to make an  application of Proposition~\ref{prop case 2} when the Galois group is unipotent.   %
 \begin{defin}[Cf.~\cite{Ravi}]   We say that $f\in K$ is an {\rm iterated integral} over $k$ if for, for some $n \in \NX$, $f^{(n)} \in k$. \end{defin}

\begin{thm} \label{prop:itint}Let $f \in K^\times$ be a iterated integral over $k$ and let $g \in K$ satisfy a nonzero homogeneous linear differential equation over $k$.  Assume that $k$ contains an element $x$ with $x'=1$, $f$ is transcendental over $k$ and that $f$ and $g$ are algebraically dependent over $k$.  Then $g \in k[f]$. \end{thm}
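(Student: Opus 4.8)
The plan is to exhibit a homogeneous linear operator over $k$ annihilating $f$ whose differential Galois group is unipotent, and then to apply Proposition~\ref{prop sln} (the simply connected case), using that unipotent groups are simply connected. Since $f$ is an iterated integral, let $n$ be minimal with $c:=f^{(n)}\in k$; as $f\notin k$ we have $n\ge 1$. First I would observe that $c'\ne 0$: otherwise $c\in C$, and then (using $x'=1$) the element $f^{(n-1)}-cx$ has zero derivative, so $f^{(n-1)}\in C+Cx\subseteq k$, contradicting the minimality of $n$. Hence $f$ is a solution of $L:=\partial\circ\frac{1}{c'}\circ\partial^{\,n}$, a homogeneous linear operator over $k$ of order $n+1$, whose solution space is $\operatorname{span}_C\{f,1,x,\ldots,x^{n-1}\}$. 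Since $x\notin C$ and $C$ is algebraically closed, $x$ is transcendental over $C$, so $1,x,\ldots,x^{n-1}$ are $C$-linearly independent, and $f\notin k$ shows $f$ is not in their $C$-span; thus $f_1:=f,\ f_2:=1,\ f_3:=x,\ \ldots,\ f_{n+1}:=x^{n-1}$ is a basis of solutions of $L$ with $f_1=f$.

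Next I would compute the differential Galois group $G_L$ of $L$ over $k$. For $\sigma\in G_L$ one has $\partial^{\,n}(\sigma(f))=\sigma(c)=c$, so $\sigma(f)=f+p_\sigma$ with $p_\sigma\in\operatorname{span}_C\{1,x,\ldots,x^{n-1}\}\subseteq k$; as $\sigma$ fixes $f_2,\ldots,f_{n+1}\in k$, the automorphism $\sigma$ is determined by $p_\sigma$, and $\sigma\mapsto p_\sigma$ is an injective morphism of algebraic groups $G_L\hookrightarrow\mathbb{G}_a^{\,n}$. Hence $G_L$ is connected and unipotent, therefore simply connected by the result of Hochschild recalled above, and the hypotheses of Proposition~\ref{prop sln} are met (with ``$n$'' there replaced by $n+1$). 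It remains to verify condition~(*): that $k(f)$ is relatively algebraically closed in $k(W(f_1,\ldots,f_{n+1}))$. Because $f_2,\ldots,f_{n+1}\in k$ and $f^{(n)}=c\in k$, this Wronskian field is exactly the Picard--Vessiot field $E_L=k(f,f',\ldots,f^{(n-1)})$.

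To check (*), I would use the $G_L$-equivariant identification $E_L\cong k(G_L\otimes k)$ coming from the Picard--Vessiot ring. As $G_L$ is a vector group of some dimension $m$, we have $G_L\otimes k\cong\mathbb{A}^m_k$, so $E_L=k(y_1,\ldots,y_m)$ for linear coordinates $y_1,\ldots,y_m$ that are algebraically independent over $k$. The relation $\sigma\cdot f=f+p_\sigma$ with $p_\sigma\in k$ depending additively (hence, in characteristic zero, linearly) on $\sigma$ forces $f$ to correspond to an affine-linear function $\alpha+\sum_j\beta_j y_j$ with $\alpha,\beta_j\in k$, not all $\beta_j$ zero since $f\notin k$. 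Choosing $j_0$ with $\beta_{j_0}\ne 0$ gives $E_L=k\bigl(f,(y_j)_{j\ne j_0}\bigr)$, a set of $m$ generators of a field of transcendence degree $m$ over $k$, hence algebraically independent over $k$; therefore $E_L$ is purely transcendental over $k(f)$, and (*) holds. The second part of Proposition~\ref{prop sln} then yields $g\in k[f]$. (Alternatively, one may verify the hypothesis of Lemma~\ref{lem sln} directly: in this unipotent incarnation every element of $\operatorname{span}_k\{X_{i,1}\}\bmod I$ is affine-linear in the $y_j$, so the same argument handles all of them at once.)

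I expect the only genuinely delicate point to be the last paragraph: pinning down that the unipotent (vector-group) structure makes $f$ an affine-linear, ``coordinate-type'' element of the Picard--Vessiot ring, and hence that $k(f)$ is relatively algebraically closed in $E_L$. The remaining steps --- producing the concrete operator $L$, recognizing $G_L$ as unipotent, and invoking simple connectedness together with Proposition~\ref{prop sln} --- are routine.
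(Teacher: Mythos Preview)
Your overall strategy matches the paper's: produce an order-$(n{+}1)$ operator $L$ with solution space $\operatorname{span}_C\{f,1,x,\ldots,x^{n-1}\}$, observe that $G_L$ is unipotent and hence simply connected, and invoke Proposition~\ref{prop sln}. One slip: the operator $\partial\circ\frac{1}{c'}\circ\partial^{\,n}$ does not annihilate $f$ --- it kills $y$ precisely when $y^{(n)}\in Cc'$, whereas $f^{(n)}=c$. The operator you want is $\partial\circ\frac{1}{c}\circ\partial^{\,n}$, i.e.\ $\partial^{\,n+1}-\tfrac{c'}{c}\partial^{\,n}$, which needs only $c\neq 0$; this follows from minimality of $n$ (if $c=0$ then $f^{(n-1)}\in C\subset k$) even more simply than your argument for $c'\neq 0$.

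Where you genuinely diverge from the paper is in verifying condition~(*). The paper proceeds concretely (Lemmas~\ref{lem:simple_int} and~\ref{lem:algindep}): from the tower obtained by adjoining $f^{(n-1)},f^{(n-2)},\ldots,f$ in turn it extracts a transcendence basis $\{f^{(n-n_1)},\ldots,f^{(n-n_r)}\}$ of $k\langle f\rangle$ over $k$, and then uses a short lemma (proved via the Harris--Sibuya trick on the denominator) to slide this basis so that its last element is $f$ itself, whence $k\langle f\rangle$ is purely transcendental over $k(f)$. Your argument instead exploits the torsor isomorphism $R_L\cong k[G_L\otimes k]=k[y_1,\ldots,y_m]$ and its $G_L$-equivariance: since $\sigma\cdot f-f\in k$ for every $\sigma$, the image of $f$ must be affine-linear in the $y_j$ (note that $f$, being an entry of $\mathfrak{Y}_L$, lies in $R_L$ and not merely in $E_L$, so you are really working in the polynomial ring here), and one coordinate can be swapped for $f$. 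Both arguments are valid. Yours is shorter and more structural, but it leans on the triviality of the torsor and hence on $k$ being algebraically closed; the paper's hands-on lemmas do not, which is why the paper is able to reuse them later in Section~\ref{sec:multint} after dropping that hypothesis on $k$.
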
 
 \begin{remark} The condition that $k$ contains an element $x$ such that $x'=1$ is needed. For example, let $f = \CX$ and $K = \CX(x), x' = 1$.  The element  $x^2$ is an iterated integral over $k$, $x$ satisfies a linear differential equation over $k$  and these elements are algebraically dependent over $k$.  Nonetheless, $x \notin \CX[x^2]$.\end{remark}
 We will use, again and again, the fact that if $E\subset F$ are differential fields with the same algebraically closed field of  constants and $y \in F$ with $y'\in E$, then either $y \in E$ or $y$ is transcendental over $E$.  This follows easily from the Galois theory and the fact that the only algebraic subgroups  of $\Ga(C)$ are $\{0\}$ and $\CX$.
 \begin{lem}\label{lem:simple_int} Let $E \subset F$ be differential fields with the same algebraically closed subfield $C$ of  constants. If $x \in k$ such that $x'=1$ and  $y,z \in F$ such that
 \begin{enumerate}
 \item $y'\in E$ and $y$ transcendental over $E$ and
 \item $z \in E(y)$ and $z'=y,$ 
 \end{enumerate}
 then $z= Ay+B$ for some $A, B \in E$.
 \end{lem}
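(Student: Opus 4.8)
The plan is to prove, in turn, that $z$ is a polynomial in $y$ over $E$ and that this polynomial has degree at most one. Since $y$ is transcendental over $E$ and $y'\in E$, the field $E(y)$ is a rational function field whose derivation is determined by $a\mapsto a'$ on $E$ together with $y\mapsto y'$; explicitly, $P'=\partial_E P+y'P_y$ for $P\in E[y]$, where $\partial_E$ differentiates coefficientwise and $P_y$ is the formal derivative in $y$. In particular $E[y]$ and each localization $E[y]_{(\pi)}$ at a monic irreducible $\pi\in E[y]$ are differential subrings, so $\pi$-adic valuations $v_\pi$ behave controllably under ${}'$.

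First I would record the classical local fact: for every monic irreducible $\pi\in E[y]$ one has $\pi'\ne 0$ and $\deg_y\pi'<\deg_y\pi$, so $\pi\nmid\pi'$ and $v_\pi(\pi')=0$. The degree bound follows from monicity in $\pi'=\partial_E\pi+y'\pi_y$; for $\pi'\ne 0$, writing $\pi=y^e+c_{e-1}y^{e-1}+\cdots$, the coefficient of $y^{e-1}$ in $\pi'$ is $c_{e-1}'+ey'$, and were this zero we would get $(c_{e-1}+ey)'=0$, hence $c_{e-1}+ey\in C\subseteq E$ and $y\in E$, contradicting transcendence. Consequently, for $w\in E(y)^{\times}$: if $v_\pi(w)=-m<0$, writing $w=u\pi^{-m}$ with $v_\pi(u)=0$ gives $w'=u'\pi^{-m}-m\,u\,\pi'\,\pi^{-m-1}$, whose second summand has valuation exactly $-m-1$ and first summand valuation $\ge-m$, so $v_\pi(w')=-m-1<0$; and if $v_\pi(w)\ge 0$ then $w'\in E[y]_{(\pi)}$, so $v_\pi(w')\ge 0$. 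Since $z'=y\in E[y]$ has no poles, neither does $z$; that is, $z\in E[y]$.

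Finally, writing $z=\sum_{i=0}^{d}a_iy^i$ with $a_i\in E$ and $a_d\ne 0$ when $z\notin E$, I would suppose $d\ge 2$ for contradiction. Comparing coefficients of $y^{d}$ in $z'=y$ gives $a_d'=0$, so $a_d\in C$; comparing coefficients of $y^{d-1}$ gives $a_{d-1}'+d\,a_d\,y'=\varepsilon$ with $\varepsilon=1$ if $d=2$ and $\varepsilon=0$ if $d\ge 3$. Using that $a_d$ is a constant and that $x'=1$ with $x\in E$, one rewrites this as $(a_{d-1}+d\,a_d\,y-\varepsilon x)'=0$, so $a_{d-1}+d\,a_d\,y-\varepsilon x\in C\subseteq E$, and since $d\,a_d\in C^{\times}$ this forces $y\in E$ --- a contradiction. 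Hence $d\le 1$, i.e.\ $z=Ay+B$ with $A,B\in E$. I expect the delicate steps to be exactly the two places where the hypotheses enter: the ``no new constants'' assumption, used via $\pi'\ne 0$ to force $z\in E[y]$, and the existence of $x$ with $x'=1$, used to eliminate the degree-two term (the remark after the statement shows this last hypothesis genuinely cannot be removed). Everything else is routine coefficient bookkeeping, with the local pole-order computation the part most in need of care.
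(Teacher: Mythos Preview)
Your argument is correct. The second half---reducing the polynomial degree by comparing coefficients and using $x'=1$ to force $y\in E$---is essentially the same as the paper's.

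The genuine difference is in the first step, showing $z\in E[y]$. The paper writes $z=p(y)/q(y)$ with $p,q$ coprime, invokes the Harris--Sibuya theorem to conclude that $q(y)'/q(y)\in E$ (since $z$ and $1/q(y)$ both satisfy linear differential equations), and then uses a short Galois-theoretic argument: $E(q(y))$ is a Picard--Vessiot extension whose Galois group is simultaneously a subgroup of $\GL_1(C)$ and a quotient of $\Ga(C)$, hence trivial, so $q(y)\in E$. Your route is instead a direct valuation computation: for each monic irreducible $\pi\in E[y]$ you show $\pi'\neq 0$ (via the no-new-constants hypothesis and transcendence of $y$) and $\deg\pi'<\deg\pi$, whence any pole of $z$ would deepen under differentiation, contradicting $z'=y\in E[y]$. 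Your approach is more elementary and self-contained---it needs neither Harris--Sibuya nor any Galois theory---while the paper's approach fits the broader theme of the article and reuses machinery already in play. Both are clean; yours would be preferable in a context where the Harris--Sibuya result is not already available.
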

 
 \begin{proof} Let $z= p(y)/q(y)$ where $p$ and $q$ are relatively prime polynomials. As in the proof of Corollary~\ref{cor f dans k de g},  \cite{HarrisSibuya85} implies that $q(y)'/q(y)$ is algebraic over $E$ and therefore in $E$ (since $E$ is algebraically closed in $E(y))$. We then have that $E(q(y))$ is a Picard-Vessiot extension of $E$ whose differential Galois group $G$ is a subgroup of $\GL_1(C)$ and is a quotient of $\Ga(C)$. This implies that $G$ must  be trivial so $q(y) \in E$.
 
 Therefore we may write $z = a_my^m + \ldots + a_0$ with the $a_i \in E$. Differentiating, we have
 \[y = z' = a_m'y^m + (ma_m y' + a_{m-1}')y^{m-1} + \ldots .\]
 If $a_m' \neq 0$ we have $m = 1$ and our conclusion follows. If $a_m' = 0$  we    must have $m \geq 2$ and we will derive a contradiction. We have that 
 \[ ma_m y' + a_{m-1}' = c \in E\]
 where $c = 0$ if $m>2$ or $c=1$ if $m=2$. In either case, $y = \frac{1}{ma_m}(cx -a_{m-1} +d)\in E$ for some constant $d$, contradicting the fact that $y$ is transcendental over $E$. \end{proof}

 \begin{lem}\label{lem:algindep} Let $k\subset K$ be differential fields with the same algebraically closed subfield of constants $C$. Assume   that there exists $x \in k$ such that $x' = 1$. \begin{enumerate}
 \item[1.]\label{lem:mult1} If $f$ is a iterated integral over $k$, then $k\langle f\rangle$ is a Picard-Vessiot extension of $k$ with unipotent Galois group. 
 \item[2.] If $f \notin k$, then there exist algebraically independent $y_1, \ldots y_{r-1}, y_r = f$  such that $  k\langle f\rangle = k(y_1, \ldots , y_r)$. In particular, $k(f)$ is algebraicaly closed in $ k\langle f\rangle$.
 \end{enumerate} 
 \end{lem}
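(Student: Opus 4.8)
The plan is to prove the two assertions separately: first exhibit an explicit linear differential equation satisfied by $f$ that makes part~1 immediate, and then exploit the resulting concrete description of the Galois group to prove part~2.

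For part~1 I would first note that one may assume $f\notin k$, since otherwise $k\langle f\rangle=k$ and its Galois group is trivial. Fix $n$ with $f^{(n)}\in k$ and put $a:=f^{(n)}$. If $a=0$ then $f\in\operatorname{span}_{C}\{1,x,\ldots,x^{n-1}\}\subseteq k$ (using $x\in k$), contradicting $f\notin k$; so $a\neq0$ and $f$ is a solution of the order $n+1$ operator $L:=\partial^{n+1}-\tfrac{a'}{a}\partial^{n}$. The elements $1,x,\ldots,x^{n-1}$ are solutions of $L$ lying in $k$, and $f\notin\operatorname{span}_{C}\{1,x,\ldots,x^{n-1}\}$, so $\{1,x,\ldots,x^{n-1},f\}$ is a $C$-basis of the solution space of $L$; hence $k\langle f\rangle=k(f,f',\ldots,f^{(n-1)})$ is a Picard--Vessiot extension of $k$ for $L$. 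Every $\sigma$ in its Galois group $G$ fixes $1,x,\ldots,x^{n-1}$ and satisfies $\sigma(f)=f+\sum_{i=0}^{n-1}c_{i}(\sigma)x^{i}$ with $c_{i}(\sigma)\in C$ (the coefficient of $f$ equals $1$ because applying $\partial^{n}$ yields $\sigma(a)=a\neq0$); in the basis $(1,x,\ldots,x^{n-1},f)$ the matrix of $\sigma$ is unipotent, so $G$ is unipotent, which proves part~1.

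For part~2 assume $f\notin k$, so by part~1 the Galois group $G$ of $L$ over $k$ is a nontrivial connected unipotent group (connected because $k$ is algebraically closed). First I would show that $\chi\colon\sigma\mapsto(c_{0}(\sigma),\ldots,c_{n-1}(\sigma))$ is a morphism of algebraic groups $G\to\Ga^{n}$: it is a homomorphism because each $\sigma$ fixes the $x^{i}$, and it is injective because $\sigma(f)=f$ forces $\sigma(f^{(j)})=f^{(j)}$ for all $j$, hence $\sigma=\mathrm{id}$ by the Galois correspondence. Thus $G$ is isomorphic, via $\chi$, to its image, a closed connected subgroup of $\Ga^{n}$, i.e.\ a linear subspace; so $G\cong\Ga^{r}$ with $r=\dim G=\operatorname{tr.deg}(k\langle f\rangle/k)\geq1$. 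Next I would invoke the standard fact that the Picard--Vessiot ring $R$ of $L$ (which contains $f$ as an entry of its fundamental matrix) is $G$-equivariantly isomorphic to $\mathcal{O}(G\otimes k)\cong k[t_{1},\ldots,t_{r}]$, with $G$ acting by translations (cf.\ \cite[Proposition~1.31]{VdPS}). Under this isomorphism $f$ corresponds to a polynomial $P\in k[t_{1},\ldots,t_{r}]$, and the relation $\sigma(f)-f\in k$ becomes: $P(t+u)-P(t)$ is independent of $t$ for every $u\in C^{r}$. Differentiating in each $t_{j}$ shows every $\partial_{t_{j}}P$ is a constant, hence $P=\sum_{j=1}^{r}b_{j}t_{j}+c$ with $b_{j},c\in k$; since $f\notin k$, some $b_{j}\neq0$, say $b_{1}\neq0$. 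Then $(P,t_{2},\ldots,t_{r})$ arises from $(t_{1},\ldots,t_{r})$ by an invertible $k$-affine change of coordinates, so $k[t_{1},\ldots,t_{r}]=k[P,t_{2},\ldots,t_{r}]$ with $P,t_{2},\ldots,t_{r}$ algebraically independent over $k$. Transporting back and relabelling yields algebraically independent $y_{1},\ldots,y_{r-1},y_{r}=f$ with $k\langle f\rangle=k(y_{1},\ldots,y_{r})$. Finally $k\langle f\rangle=k(f)(y_{1},\ldots,y_{r-1})$ is purely transcendental over $k(f)$, whence $k(f)$ is algebraically closed in $k\langle f\rangle$.

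The delicate step is the last one: converting the infinitesimal information $\sigma(f)\equiv f\pmod{k}$ into the assertion that $f$, regarded inside the coordinate ring of the Galois group, is one of the coordinate functions. This needs (i) that $G$ is a vector group, which is the reason for realizing $G$ first as a subgroup of $\Ga^{n}$; (ii) the equivariant identification of the Picard--Vessiot ring with $\mathcal{O}(G\otimes k)$ under which the Galois action is by translation; and (iii) the elementary characteristic-zero fact that a polynomial all of whose translates differ from it by constants is affine-linear. Once these are in place, using $f$ as a coordinate is routine. An alternative, purely field-theoretic induction on the least $n$ with $f^{(n)}\in k$, using Lemma~\ref{lem:simple_int}, also seems feasible but appears to require more bookkeeping to handle the case $f\in k\langle f'\rangle$.
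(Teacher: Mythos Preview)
Your proof is correct. For part~1 it is essentially the paper's argument: both write down the operator $L=\partial^{n+1}-\tfrac{h'}{h}\partial^{n}$, identify $\{1,x,\ldots,x^{n-1},f\}$ as a $C$-basis of solutions, and conclude unipotence (you compute the Galois matrices directly; the paper instead builds a tower of $\Ga$-extensions---a minor variation).

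For part~2 your route differs genuinely from the paper's. The paper argues field-theoretically: from the generators $f^{(n-1)},\ldots,f',f$ of $k\langle f\rangle$ it selects a transcendence basis $\{f^{(n-n_{1})},\ldots,f^{(n-n_{r})}\}$ with $n_{r}$ maximal, and if $n_{r}<n$ applies Lemma~\ref{lem:simple_int} with $E=k(f^{(n-n_{1})},\ldots,f^{(n-n_{r-1})})$, $y=f^{(n-n_{r})}$, $z=f^{(n-n_{r}-1)}$ to get $y=az+b$ with $a,b\in E$, whence $E(y)=E(z)$, contradicting maximality; thus $n_{r}=n$ and $y_{r}=f$. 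Your approach is group-theoretic: you realise $G$ as a vector group $\Ga^{r}$, trivialise the Picard--Vessiot torsor so that $R\cong k[t_{1},\ldots,t_{r}]$ with $G$ acting by translations, and read off from $\sigma(f)-f\in k$ that $f$ corresponds to an affine-linear polynomial, hence can replace one coordinate.

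What each buys: the paper's argument is more elementary (only Lemma~\ref{lem:simple_int}, no torsor trivialisation), gives the extra information that the $y_{i}$ may be chosen among the derivatives of $f$, and does not use the standing hypothesis that $k$ is algebraically closed---this is why the paper can reuse the same idea in Corollary~\ref{cor:lindep} after that hypothesis is dropped. Your argument is conceptually clean and in the spirit of the tannakian viewpoint used elsewhere in the paper, but it relies on $k$ being algebraically closed (for $G$ to be connected and for the torsor $\operatorname{Spec}(R)$ to be trivial), so it does not immediately extend to the non-algebraically-closed setting treated later in Section~\ref{sec:multint}.
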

 \begin{proof} If $f^{(n)} = h \in k$ then $f$ satisfies 
\begin{align} \label{eq:int}
y^{(n+1)} - \frac{h'}{h} y^{(n)} &= 0.
\end{align}
(1) If $f \in k$, then clearly $k\langle f\rangle$ is a Picard-Vessiot extension of $k$. If $f \notin k$ then $f, 1, x, \ldots x^{n-1}$ are linearly independent over $C$ and so form a basis of the solution space of  \eqref{eq:int} and $k\langle f\rangle$ is a Picard-Vessiot extension of $k$. In addition, if we define $k_i = k_{i-1}(f^{(n-i)})$, we have a tower of differential fields $k = k_0 \subset k_1\subset  \ldots \subset k_n = k\langle f\rangle$ where each $k_i$  is a Picard -Vessiot extension whose differential Galois group is either $\{0\}$ or $\Ga$.  Therefore the differential Galois group of $k\langle f\rangle$ over $k$ is unipotent.
 
 (2) Assume that the transcendence degree of $k\langle f\rangle$ over $k$ is $r$.  By assumption $r \geq 1$. Using the construction of the $k_i$ we see that from  the set $\{ f^{(n-1)},f^{(n-2)}, \ldots , f', f \}$ we may select a subset of elements $ T= \{y_i =  f^{(n-n_i)}\ | \ n_1<n_2< \ldots < n_r\}$ that forms a transcendence basis. We wish to show that we can select these so that $n_r = n$. Suppose among all choices of the set $T$ we have selected one with $n_r$ maximal.  If $n_r < n$, let $E = k(y_1, \ldots, y_{r-1}), y = y_r = f^{(n-n_r)}, z = f^{(n-(n_r+1))}$. Applying  Lemma~\ref{lem:simple_int}, we have that $y_r = az+b$ for some $a,b \in E$.  Therefore 

\begin{align*} 
k\langle f\rangle &= k(f^{(n-n_1)}, \ldots, f^{(n-n_{r-1})}, f^{(n-n_r)})\\
&=E(y_r) \\
&= E(z) \\
&= k(f^{(n-n_1)}, \ldots,f^{(n-n_{r-1})}, f^{(n-(n_r+1))}),
\end{align*}
contradicting the maximality of $n_r$. 
\end{proof}

\vspace{.2in}

\begin{proofitint} This now follows from Proposition~\ref{prop sln}\end{proofitint}

{\bf For the rest of Section~\ref{sec:multint} we remove the assumption that $k$ is algebraically closed and only assume it is a differential field of charactersitic zero with an algebraically closed subfield of constants.}

A consequence of the Kolchin-Ostrowski Theorem (\cite[Section 2]{Kolchin68}) is that if $k \subset K$ are differential fields with the same constants and $f_1, \ldots , f_n \in K$ with $f_i' \in k$ for $i = 1, \ldots n$ then the $f_i$ are algebraicaly dependent over $k$ if and only if there exist constants $c_i$, not all zero,  such that $c_1f_1 + \ldots + c_nf_n \in k$. Proposition~\ref{prop:liouv}  below gives  a related  result for iterated integrals.

\begin{cor}\label{cor:lindep} Let $k \subset K$ have the same field of constants and assume that there is an element $x\in k$ such that $x'=1$.  Let $f, g \in K$ be iterated integrals over $k$.  If $f$ and $g$ are algebraically dependent over $k$ then there exist $u,v \in k$, not both zero, such that $uf+vg \in k$.\end{cor}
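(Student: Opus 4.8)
The plan is to use the differential Galois theory of unipotent groups to pin down the shape of the algebraic relation between $f$ and $g$: first I reduce to the case where $g$ is a M\"obius function of $f$ over $k$, and then I rule out the genuinely fractional case by a short computation in the Galois group. (Nothing below uses that $k$ is algebraically closed, only that $C$ is.) First I would dispose of the trivial cases: if $f\in k$ take $(u,v)=(1,0)$, and if $g\in k$ take $(u,v)=(0,1)$; so assume $f,g\notin k$. An iterated integral over $k$ that is algebraic over $k$ must lie in $k$: by Lemma~\ref{lem:algindep}(1), $k\langle f\rangle$ is a Picard--Vessiot extension of $k$ with unipotent Galois group, and if $f$ were algebraic over $k$ then $k\langle f\rangle$ would be algebraic over $k$, so its Galois group would be a finite unipotent group and hence (characteristic zero) trivial, forcing $f\in k$. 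Thus $f$ and $g$ are transcendental over $k$, and since they are algebraically dependent over $k$, $g$ is algebraic over $k(f)$ and $f$ over $k(g)$.

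Next I would promote ``$g$ algebraic over $k(f)$'' to ``$g\in k(f)$''. Put $E=k\langle f\rangle$; by Lemma~\ref{lem:algindep}(1) this is a Picard--Vessiot extension of $k$, so its field of constants is $C$ and it contains $x$ with $x'=1$. Since $g^{(m)}\in k\subseteq E$ for some $m$, $g$ is an iterated integral over $E$, so $E\langle g\rangle$ is a Picard--Vessiot extension of $E$ with unipotent Galois group; but $g$, being algebraic over $k(f)\subseteq E$, has all of its derivatives algebraic over $E$, so $E\langle g\rangle$ is algebraic over $E$, its Galois group is a finite unipotent group and hence trivial, and therefore $g\in E=k\langle f\rangle$. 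By Lemma~\ref{lem:algindep}(2), $k(f)$ is relatively algebraically closed in $k\langle f\rangle$, so $g\in k(f)$; by symmetry $f\in k(g)$, whence $k(f)=k(g)$. Writing $g=R(f)$ with $R\in k(X)$, L\"uroth's theorem (in its refined form, $[k(f):k(R(f))]=\deg R$) shows that $k(g)=k(f)$ forces $\deg R=1$, that is,
$$
g=\frac{af+b}{cf+d},\qquad a,b,c,d\in k,\quad ad-bc\neq 0.
$$

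If $c=0$, then $a,d\neq 0$ and $g=(a/d)f+(b/d)$, so $(u,v)=(a/d,-1)$ gives $uf+vg=-b/d\in k$, as required. It remains to show $c\neq 0$ is impossible. Let $U$ be the differential Galois group of $k\langle f\rangle$ over $k$. For $\sigma\in U$, the elements $f^{(n)}$ and $g^{(m)}$ lie in $k$ and are fixed by $\sigma$, so $(\sigma(f)-f)^{(n)}=0$ and $(\sigma(g)-g)^{(m)}=0$; since the solutions in $k\langle f\rangle$ of $y^{(j)}=0$ are exactly the $C$-linear combinations of $1,x,\dots,x^{j-1}$, we may write $\sigma(f)=f+P_\sigma$ and $\sigma(g)=g+Q_\sigma$ with $P_\sigma,Q_\sigma\in k$. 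Applying $\sigma$ to the identity $(cf+d)g=af+b$ and clearing denominators yields a polynomial identity in $f$ with coefficients in $k$ whose leading coefficient is $c^{2}Q_\sigma$ and whose constant term, once $Q_\sigma=0$, equals $-(ad-bc)P_\sigma$. Since $f$ is transcendental over $k$, this gives $Q_\sigma=0$ and then $P_\sigma=0$. Thus every $\sigma\in U$ fixes $f$, so $f$ lies in the fixed field of $U$, which is $k$ by the Galois correspondence --- contradicting $f\notin k$.

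The step I expect to be the main obstacle is the promotion ``algebraic over $k(f)$'' to ``in $k(f)$'' in the second paragraph: one must verify that Lemma~\ref{lem:algindep} is genuinely available over the now non-algebraically-closed base fields $k$ and $E=k\langle f\rangle$ and that no new constants enter. Once $g$ is known to be a M\"obius function of $f$, the L\"uroth step and the cocycle computation in $U$ are routine.
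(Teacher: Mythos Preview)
Your argument is correct, and it proceeds along a genuinely different route from the paper's.

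The paper first shows that $E=k(f,f',\ldots,g,g',\ldots)$ is purely transcendental over $k$, then passes to the algebraic closure $\bar k$ and invokes Theorem~\ref{prop:itint} (hence ultimately Proposition~\ref{prop sln}) to get $f=P(g)$ and $g=Q(f)$ with $P,Q\in\bar k[Y]$ \emph{polynomial}; from $P\circ Q=\mathrm{id}$ both are linear, and the resulting $\bar k$-linear relation among $f,g,1$ is descended to $k$ by linear disjointness of $E$ and $\bar k$. Your argument stays entirely over $k$: you use Lemma~\ref{lem:algindep} twice (first over $E=k\langle f\rangle$ to force $g\in E$, then Lemma~\ref{lem:algindep}(2) to force $g\in k(f)$), and by symmetry obtain $k(f)=k(g)$; this only yields a \emph{rational} relation, so you then use L\"uroth to reduce to a M\"obius form and a short Galois computation (that $\sigma(f)-f,\ \sigma(g)-g\in C[x]\subset k$ for all $\sigma$, forcing the fractional case to collapse to $f\in k$) to finish.

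The trade-offs: the paper's route is shorter once Theorem~\ref{prop:itint} is available, since polynomiality makes the degree argument immediate, but it pays the price of passing to $\bar k$ and arguing the descent. Your route avoids both the algebraic closure and the black box of Proposition~\ref{prop sln}, at the cost of the extra L\"uroth/cocycle step; it also makes transparent exactly which inputs are needed (only Lemma~\ref{lem:algindep}, which indeed does not require $k$ algebraically closed). Your worry in the final paragraph is unfounded: Lemma~\ref{lem:algindep} is stated and proved for $k$ merely of characteristic zero with algebraically closed constants, and $E=k\langle f\rangle$, being a Picard--Vessiot extension of $k$, has the same constants $C$ and contains $x$, so the lemma applies over $E$ as well.
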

\begin{pproof} Let $E = k(f, f', \ldots , f^{(n-1)}, g, g', \ldots  , g^{(m-1)})$.  Proceeding in a manner similar to the proof of  Lemma~\ref{lem:algindep}.2, there is an algeraically independent set of elements $\{z_1, \ldots , z_t\} \subset \{f, f', \ldots , f^{(n-1)}, g, g', \ldots  , g^{(m-1)}\}$ such that $E = k(z_1, \ldots , z_t)$, that is, $E$ is a purely transcendental extension of $k$. Therefore if either $f$ or $g$ are algebraic, then both of them will lie in $k$ and the conclusion follows. Therefore we may assume neither $f$ nor $g$ are algebriac over $k$.

 Theorem~\ref{prop:itint} implies that $f = P(g)$ and $g = Q(f)$ for some polynomials $P, Q \in \bar{k}[Y]$ where $\bar{k}$ is the algebraic closure of $k$. This implies that $P$ and $Q$ are linear polynomials so $f$, $g$ and $1$ are linearly dependent over $\bar{k}$. Since $E$ is a purely transcendental extension of $k$, it is linearly disjoint from $\bar{k}$ over $k$.  Therefore $f$, $g$ and $1$ are linearly dependent over $k$. \end{pproof}
 
 \begin{example} 
 Let $k = \CX(x), K = \CX(x, \log x)$ and let $f = \log x +x, g = x\log x$. We have $f', g'' \in k$ and $xf-g = x^2 \in k$.
  \end{example}

\begin{prop}\label{prop:liouv} Let $k \subset K$ have the same field of constants and assume that there is an element $x\in k$ such that $x'=1$. Let $f_1, \ldots , f_n$ be iterated integrals over $k$. If $f_1, \ldots , f_n$  are algebraically dependent over $k$ then there exist $u_1, \ldots , u_n \in k$, not all zero, such that $u_1f_1+ \ldots + u_n f_n \in k$. \end{prop}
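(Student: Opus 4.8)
The plan is to induct on $n$, using Corollary~\ref{cor:lindep} as the case $n\le 2$ and the unipotent Picard-Vessiot machinery of Section~\ref{sec:multint} for the inductive step. First I would reduce to the case where $k$ is algebraically closed. Arguing as in the proof of Corollary~\ref{cor:lindep}, the iterated integrals $f_1,\dots,f_n$ sit inside a Picard-Vessiot extension $E$ of $k$ (a compositum of extensions of the type produced by Lemma~\ref{lem:algindep}.1) which is a purely transcendental extension of $k$, hence linearly disjoint from $\bar k$ over $k$; since $C$ is algebraically closed, the constants of $\bar k$ are still $C$, so any $\bar k$-linear relation $\sum_i u_i f_i\in\bar k$ descends to a $k$-linear one. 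From now on $k=\bar k$. If some $f_i\in k$, or if some proper subfamily is already algebraically dependent over $k$, we are done (trivially, or by the induction hypothesis padded with zeros); so I may assume all $f_i$ are transcendental over $k$ and, after renumbering, that $f_1,\dots,f_{n-1}$ are algebraically independent over $k$ while $f_n$ is algebraic over $F:=k(f_1,\dots,f_{n-1})$.

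Let $L:=k\langle f_1,\dots,f_{n-1}\rangle$; by Lemma~\ref{lem:algindep}.1 applied to the compositum, this is a Picard-Vessiot extension of $k$ with unipotent, hence connected, Galois group $U$ and field of constants $C$. Since $f_n$ is an iterated integral over $L$ and algebraic over $L$, the extension $L\langle f_n\rangle/L$ is Picard-Vessiot with finite unipotent Galois group, hence trivial, so $f_n\in L$. Next I claim
\[
f_n=\sum_{i=1}^{n-1}\sum_{j=0}^{n_i-1}\nu_{i,j}\,f_i^{(j)}+c,\qquad \nu_{i,j}\in C[x],\ c\in k,
\]
where $f_i^{(n_i)}\in k$. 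Indeed, for $\sigma\in U$ and any index $i$ (including $i=n$), $\sigma(f_i)-f_i$ is annihilated by $\partial^{n_i}$ --- because $f_i^{(n_i)}\in k$ is fixed --- so it is a polynomial in $x$ of degree $<n_i$; writing $\sigma(f_i)-f_i=\sum_a\gamma^{(i)}_a(\sigma)x^a$ and using that $\sigma$ fixes $k\supseteq C[x]$, one checks that the $\gamma^{(i)}_a$ are additive characters of $U$, that $U$ is abelian (it embeds into $\prod_{i\le n-1}C[x]_{<n_i}$ via $\sigma\mapsto(\sigma(f_i)-f_i)_i$, this being injective since fixing the $f_i$ forces fixing all $f_i^{(j)}$ and hence $L$), hence a vector group, and that the $\gamma^{(i)}_a$ with $i\le n-1$ span its character group. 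Expanding the characters attached to $f_n$ in this spanning family and solving a triangular linear system over $C[x]$ produces $\nu_{i,j}\in C[x]$ for which $G:=\sum_{i,j}\nu_{i,j}f_i^{(j)}$ has the same $U$-translates as $f_n$; then $f_n-G\in L^U=k$.

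The last --- and hardest --- step is to remove the terms with $j\ge 1$. Set $\Theta:=\sum_{i,\,j\ge1}\nu_{i,j}f_i^{(j)}$, so that $f_n=\sum_i\nu_{i,0}f_i+\Theta+c$. Each $f_i^{(j)}$ is again an iterated integral, a polynomial multiple of an iterated integral is one, and $\Theta=f_n-\sum_i\nu_{i,0}f_i-c$ is algebraic over $F$; hence $\Theta$ is an iterated integral algebraic over $F$. If $\Theta$ is algebraic over $k(f_i:i\in S)$ for some proper $S\subsetneq\{1,\dots,n-1\}$, then $\{f_i:i\in S\}\cup\{\Theta\}$ is an algebraically dependent family of at most $n-1$ iterated integrals, the induction hypothesis yields a $k$-linear relation among them, and substituting $\Theta=f_n-\sum_i\nu_{i,0}f_i-c$ converts it into a $k$-linear relation among $f_1,\dots,f_n$. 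The remaining possibility, that $\Theta$ genuinely depends on all of $f_1,\dots,f_{n-1}$, is where I expect the real difficulty to lie: there one should first upgrade ``$\Theta$ algebraic over $F$'' to ``$\Theta\in F$'' (so that $f_n\in k(f_1,\dots,f_{n-1})$), then deduce via a Harris--Sibuya-type denominator argument that $f_n$ is a polynomial in the $f_i$ over $k$, and finally, by comparing $f_n$ with its $U$-translates $\sigma(f_n)=R\bigl(f_1+(\sigma f_1-f_1),\dots,f_{n-1}+(\sigma f_{n-1}-f_{n-1})\bigr)$ --- which must differ from $f_n$ only by a polynomial in $x$ --- conclude that this polynomial is affine, i.e. $f_n=\sum_i a_if_i+a_0$ with $a_i\in k$. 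This final comparison is the multivariate analogue of the computation underlying the case $n=2$.
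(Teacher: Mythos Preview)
Your reduction to $k=\bar k$, the argument that $f_n\in L=k\langle f_1,\dots,f_{n-1}\rangle$, and the character computation giving $f_n=\sum_{i,j}\nu_{i,j}f_i^{(j)}+c$ with $\nu_{i,j}\in C[x]$ are all fine and rather elegant. The gap is exactly where you say it is: Case~B is not proved, and the sketch you give for it does not go through as stated. The problematic step is B2, the ``Harris--Sibuya-type denominator argument''. In the univariate situation (Corollary~\ref{cor f dans k de g}) one uses B\'ezout in $k[X]$ to exhibit $1/Q(g)$ as a $k[g]$-combination of $f$ and $1$, hence holonomic, and then invokes Harris--Sibuya. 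In your multivariate setting $f_n=P(f_1,\dots,f_{n-1})/Q(f_1,\dots,f_{n-1})$ with $P,Q$ coprime in $k[X_1,\dots,X_{n-1}]$, there is no B\'ezout identity producing $1/Q$, so there is no reason for $1/Q(f_1,\dots,f_{n-1})$ to be holonomic, and Harris--Sibuya does not apply. Step~B1 (that $F$ is relatively algebraically closed in $L$) also requires a genuine multivariate extension of Lemma~\ref{lem:algindep}.2 that you have not supplied. So as written the argument stops at the formula $f_n=\sum_i\nu_{i,0}f_i+\Theta+c$ without a mechanism to kill $\Theta$.

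The paper's proof avoids this entire detour. Instead of working with the Galois group of the full tower, it exploits the symmetry of the minimal dependency: for \emph{every} pair $i\neq j$ one applies Corollary~\ref{cor:lindep} over the intermediate field $F_{i,j}=k(f_\ell:\ell\neq i,j)$ to get $f_i=A_{i,j}f_j+B_{i,j}$ with $A_{i,j},B_{i,j}\in F_{i,j}$. One then views $k(f_2,\dots,f_n)$ as a purely transcendental extension and uses the partial derivatives $\partial_j=\partial/\partial f_j$ to compare the relations for different pairs; for instance, equating $A_{1,2}f_2+B_{1,2}=A_{1,3}f_3+B_{1,3}$ and applying $\partial_3$ forces $A_{1,2}$ to be independent of $f_3$, and iterating over all indices forces $A_{1,2}\in k$ and $B_{1,2}$ to be $k$-linear in $f_3,\dots,f_n,1$. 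This pairwise-comparison trick is short and needs nothing beyond Corollary~\ref{cor:lindep}; in particular it never has to analyze derivatives $f_i^{(j)}$ with $j\ge1$, which is precisely what traps your approach.
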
%

\begin{pproof} We can assume that for each pair $i, j, \ i\neq j$ 
\begin{itemize}
\item the set $\{f_1, \ldots , \widehat{f_i}, \ldots , \widehat{f_j}, \ldots , f_n\}$ is algebraically independent over $k$, 
\item $f_i, f_j$ are transcendental over $F_{i,j} = k(f_1, \ldots , \widehat{f_i}, \ldots , \widehat{f_j}, \ldots , f_n)$, and
\item $f_i$ and $f_j$ are algebraically dependent over $F_{i,j}$.
\end{itemize}
Corollary~\ref{cor:lindep} implies that 
\begin{align*}
f_i =& A_{i,j}f_j + B_{i,j} \text{ for some } A_{i,j},B _{i,j} \in F_{i,j}.
\end{align*}
In particular, $f_1 = A _{1,2}f_2 + B_{1,2}\text{ for some } A_{1,2},B _{1,2} \in F_{1,2}$.  We shall show that $A_{1,2} \in k$ and that $B_{1,2}$ is a $k$ linear combination of $f_3, \ldots, f_n, 1$. Note that $k(f_2, \ldots, f_n)$ is a purely transcendental extension of $k$.  We will let $\partial_i$ denote the derivation $\frac{\partial}{\partial f_i}$ on this field.

\underline{$A_{1,2} \in k$.} Since we also have $f_1 = A_{1,3}f_3 + B_{1,3} \text{ for some } A_{1,2},B _{1,2} \in F_{1,3}$, $A_{1,3}f_3 + B_{1,3} = A _{1,2}f_2 + B_{1,2}$. Applying $\partial_3$, we have $A_{1,3} = \p3(A_{1,2}) f_2 + \p3(B_{1,2})$. Since $A_{1,3}$ is independent of $f_3$ and $A _{1,2}$ is independent of $f_1, f_2$ we have $A_{1,2} \in k(y_4, \ldots , y_n)$ so $A_{1,2} = \alpha f_3 + \beta$, where $\alpha, \beta \in k(f_4, \ldots , f_n)$. 

We also have $f_2 =A _{2,1}f_1 + B_{2,1}\text{ for some } A_{2,1},B _{2,1} \in F_{1,2} = F_{2,1}$ and  $f_2 =A _{2,3}f_3 + B_{2,3}\text{ for some } A_{2,3},B _{2,3} \in F_{2,3}$. Comparing these two expressions in a manner similar to the above we have that $A_{2,1} = \bar{\alpha} f_3 + \bar{\beta}, \alpha, \beta \in k(f_4, \ldots , f_n)$.  Since $A_{2,1} = A_{1,2}^{-1}$, we must have $\alpha = \bar{\alpha} = 0$, that is, $A_{1,2}$ is independent of $f_3$.  Replacing $f_3$ with each $f_i, i =4, \ldots, n$ in the above argument yields that $A_{1,2} \in k$.

\underline{$B_{1,2}$ is a $k$-linear combination of $f_3, \ldots, f_n, 1$.} Note that the above argument can be modified to work equaly well for $f_1, f_j, \ j>1$ to conclude that $f_1 = A_{1,j} f_j + B_{1,j}$ where $A_{1,j} \in k$ and $B_{1,j} \in F_{1,j}$.  Therefore $\partial_j(f_1)  = A_{1,j} \in k$, that is, $f_1 \in  k(f_2, \ldots, f_n)$ is linear in each of the $f_i, 2\leq i \leq n$ and so the conclusion is obtained. \end{pproof}

\appendix
\section{The tannakian correspondence}\label{tannaka} We give an informal description of this correspondence, enough to understand its use in our proofs. For a formal description see \cite{delignemilne}, \cite{KatzAlgSolDiffEqua,katzcalculation}, \cite[Ch.~2 and Appendix B]{VdPS}. \\

Let $k$ be a differential field of characterstic zero (not assumed to be algebraically closed)  with algebraically closed field of constants $C$. Let 
\begin{align}\label{eq:app1}
Y' =& AY
\end{align}
with $A \in \gl_n(k)$ and let $K$ be its Picard-Vessiot extension. If $\mathfrak{Y} \in \GL_n(K)$ is a funadamental solution matrix of \eqref{eq:app1}, we refer to the ring $k[\mathfrak{Y}, \det(\mathfrak{Y})^{-1}]$ generated by the entries of $\mathfrak{Y}$ and the inverse of its determinant as the Picard-Vessiot ring associated with \eqref{eq:app1}. It is independent of our choice of fundamental solution matrix.

One can associate to  \eqref{eq:app1}  a differential module $M$, that is, a finite dimensional $k$-vector space $M$ endowed with a map $\partial : M\rightarrow M$ such that for all $f \in k$ and $m,n \in M$, $\partial (m+n) = \partial m + \partial n$ and $\partial(fm) = f'm + f \partial m$. This is done in the following way. Let $M = k^n$ and denote the standard basis by $\{e_i\}$. We define $\partial$ by setting $\partial(e_i):= -\sum_j a_{j,i} e_j$ where $A = (a_{i,j})$. Given any differential module together with a basis $\mathbf{e} =\{e_i\}$, one can reverse this process and produce a differential equation $Y'= A_{\mathbf e} Y$. If ${\mathbf f} = B{\mathbf e}$ is another basis of $M$ with $B \in \GL_n(k)$ then $A_{\mathbf e}$ and $A_{\mathbf f}$ are related by $A_{\mathbf e} = B^{-1}A_{\mathbf f}B- B^{-1}B$. In this case we say that the equations are {\it gauge equivalent} or just {\it equivalent}. \\

Given differential modules $M_1,M_2$ one can define a differential module homomorphism as well as sub and quotient differential modules in the obvious way. One can also form    direct sums, tensor products, and duals as in the following table:
\def\arraystretch{1.5}
$$
\begin{tabu}{ c | c | c }
  \hline
 \text{Construction} & \partial & \text{Equation}  \\
 \hline
 M_1 \oplus M_2 &\partial(m_1\oplus m_2) = \partial_1m_1 \oplus \partial_2m_2 & Y' = \begin{pmatrix} A_1 & 0 \\ 0 & A_2\end{pmatrix}Y\\
  \hline
  M_1 \otimes M_2 & \partial(m_1\otimes m_2 ) =& Y' = \\
    & \partial_1m_1 \otimes m_2 + m_1 \otimes \partial_2m_2 & (A_1 \otimes I_{m_2} + I_{m_1}\otimes A_2)Y\\
    \hline
   M_1^* & \partial(f)(m) = f(\partial_1(m)) & Y' = -A^TY\\
   \hline
\end{tabu}
$$

The collection of all differential modules forms a category closed under differential module homomorphism and the constructions of  submodules, quotients, direct sums, tensor products, and duals, which we refer to as {\it the constructions of linear algebra}. We denote by $\{\{M\}\}$ the smallest subcategory containing $M$ and closed under these five constructions. Given any $N \in \{\{M\}\}$ and any basis ${\mathbf e}$ of $N$, we can form the associated differential equation $Y' = A_{N,{\mathbf e}} Y$, $A_{N,{\mathbf e}}\in \gl_m(k)$. It is known (\cite[Ch.~2.4]{VdPS}) that this equation has a fundamental matrix $\mathfrak{Y}_{N,\mathbf{e}} \in \GL_m(K)$ and, {\it a fortiori}, in the Picard-Vessiot ring $k[\mathfrak{Y}, \det(\mathfrak{Y})^{-1}]$ (\cite[Corollary 1.38]{VdPS}).  The differential Galois group $G$ of $K$ over $k$ acts on the $C$-space $V$ spanned by the columns of $\mathfrak{Y}$ and so induces a representation of $G$. The space $V$ depends on our selection of bases of $N$ but it can be shown that the representation of $G$ is independent, up to $G$-isomorphism, of this choice of bases (see \cite[Ch.~2.4]{VdPS} for a basis free way of defining this representation). We will denote this representation of $G$ by $S(N)$.

We denote by $\rep_G$ the category of representations of $G$, that is the category of finite dimensional $C$-vector spaces on which $G$ acts. In this category one also has a notion of the five constructions above.  The tannakian correspondence \cite[Theorem 2.22]{VdPS} says that
\begin{quotation} The map $N \mapsto S(N)$ is a bijective map between  elements of $\{\{M\}\}$ {(modulo isomorphism of differential  modules)} and  representations of $G$ {(modulo conjugacy of representations)}. This map is compatible with homomorphisms and the five constructions above.
\end{quotation} 
One consequence of this is: if $k$ is algebraically closed, then the Picard-Vessiot ring is naturally isomorphic to the coordinate ring (over $k$)  of the differential Galois group.

\begin{example}\label{ex:appendix} Consider  $Y' = AY$ with $A \in \gl_n(k)$ and let $\tilde{A} = A -(\tr(A)/n)I_n$. Since $\tr(\tilde{A}) = 0$, the differential Galois group of the equation $Y' = \tilde{A}Y$ is unimodular (\cite[Exercise 1.35.5]{VdPS}) and this construction is often used to reduce questions about general linear differential equations to ones that have unimodular Galois groups. 
We now form the $2n\times 2n$ differential system
\begin{align}
Y' =& \mathrm{diag}((\tr(A)/n)I_n, \tilde{A}) Y
\end{align}
and let $\tilde{K}$ be its Picard-Vessiot extension with differential Galois group $H$. Let $K$ be the Picard-Vessiot  extension for $Y'=AY$ and let $G$ be its differential Galois group.

If $\mathfrak{Y}$ is the fundamental solution matrix of $Y'=AY$, then $\mathfrak{y} = \sqrt[n]{\det \mathfrak{Y}}$ is a solution of {$y'= (\tr{A}/n)y$}. Note that $\mathfrak{y}$ need not lie in $K$ but $\mathfrak{y}^n \in K$. A computation shows that if $\tilde{\mathfrak{Y}}$ is a fundamental solution matrix of $y' = \tilde{A}y$, then $\mathfrak{y}I_n\cdot \tilde{\mathfrak{Y}}$ is a fundamental solution matrix of $Y' = AY$.  Therefore $K \subset \tilde{K}$. 
The differential Galois theory implies that restricting $H$ to $K$ gives a short exact sequence 
\begin{align*}
0\rightarrow H_0 \rightarrow H \xrightarrow[]{\rho} G \rightarrow 0
\end{align*}
where $H_0$ is a finite  subgroup of $\GL_n(C)$  (since $\tilde{K}$ is a simple radical extension of $K$) and the map $\rho$ is given by 
\begin{align*}
\rho: \mathrm{diag} (t,h) \rightarrow th.
\end{align*}
This defines a homomorphism  of $H \subset \GL_{2n}(C)$ to $\GL_n(C)$ and so defines a representation of $H$.

This example appears in the proof of Proposition~\ref{prop: case GL}.
\end{example}

\section*{Acknowledgement} The work of the first author was supported by the ANR De rerum natura project, grant ANR-19-CE40-0018 of the French Agence Nationale de la Recherche. The second author was partially supported by a grant from the Simons Foundation (\#349357, Michael Singer).

\bibliography{biblio}

 \end{document}